\DeclarePairedDelimiter{\ceil}{\lceil}{\rceil}
\patchcmd{\ps@pprintTitle}{\footnotesize\itshape
        \hfill\today}{\relax}{}{}
\newtheorem*{thmA}{Theorem A}
\newtheorem*{thmB}{Theorem B}
\newtheorem*{thmC}{Theorem C}
\newtheorem{theorem}{Theorem}[section]
\newtheorem{lemma}[theorem]{Lemma}
\newtheorem{corollary}[theorem]{Corollary}
\newtheorem{prop}[theorem]{Proposition}
\newtheorem*{con7*}{Conjecture 7*}
\newtheorem{Remark}{Remark}
\theoremstyle{definition}
\newtheorem{definition}[theorem]{Definition}
\newtheorem{conj}{Conjecture}
\DeclareMathOperator{\e}{\mathit{exp}}
\journal{}
\begin{document}

\begin{frontmatter}

\title{ On the Exponent Conjectures}

 \author[IISER TVM]{A. Antony}
\ead{ammu13@iisertvm.ac.in}
\author[IISER TVM]{V.Z. Thomas\corref{cor1}}
\address[IISER TVM]{School of Mathematics,  Indian Institute of Science Education and Research Thiruvananthapuram,\\695551
Kerala, India.}
\ead{vthomas@iisertvm.ac.in}
\cortext[cor1]{Corresponding author. \emph{Phone number}: +91 8921458330}

\begin{abstract}
If $p$ is an odd prime, then we prove that $\e(H_2(G,\mathbb{Z})) \mid p\ \e(G)$ for $p$ groups of class 7. We prove the same for $p$ groups of class at most $p+1$ with $\e(Z(G))=p$. We also prove Schurs conjecture if $\e(G/Z(G))$ is $2,3$ or $6$.  Furthermore we prove that if $G$ is a solvable group of derived length $d$ and $\e(G)=p$, then $\e(H_2(G,\mathbb{Z})) \mid (\e(G))^{d-1}$. We also show that if $G$ is a finite $2$ or $3$ generator group of exponent 5, then $\e(H_2(G,\mathbb{Z})) \mid (\e(G))^2$.
\end{abstract}

\begin{keyword}
 Schur Multiplier  \sep group actions.
\MSC[2010]   20B05 \sep 20D10 \sep 20D15 \sep 20F05 \sep 20F14 \sep 20F18 \sep 20G10 \sep 20J05 \sep 20J06 
\end{keyword}

\end{frontmatter}

 \section{Introduction}
 
 Schur's exponent conjecture states that 
 
  \begin{conj}\label{C1}
 if $G$ is a finite group, then $\e(H_2(G,\mathbb{Z})) \mid \e(G)$.  
\end{conj}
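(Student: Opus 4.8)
The plan is to establish $\e(H_2(G,\mathbb{Z})) \mid \e(G)$ by reducing to a prime-local statement and then bounding the exponent of the nonabelian exterior square. Write $M(G):=H_2(G,\mathbb{Z})$, a finite abelian group, and decompose it into its primary components $M(G)=\bigoplus_{p} M(G)_{(p)}$. First I would fix a prime $p$ and a Sylow $p$-subgroup $P\le G$ and exploit that the composite $\mathrm{cor}\circ\mathrm{res}\colon M(G)\to M(P)\to M(G)$ is multiplication by the index $[G:P]$, which is prime to $p$; restricting to $M(G)_{(p)}$ this composite is an isomorphism, so $M(G)_{(p)}$ is a retract of $M(P)$ and hence $\e(M(G)_{(p)})\mid \e(M(P))$. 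Since the $p$-part of $\e(G)$ equals $\e(P)$, the whole conjecture follows once I prove $\e(M(P))\mid \e(P)$ for every finite $p$-group $P$. From here I work with a fixed $p$-group, which I again call $G$.

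Next I would pass to the nonabelian exterior square and use the exact sequence $1\to M(G)\to G\wedge G\to G'\to 1$, which identifies $M(G)$ with the kernel of the commutator map $g\wedge h\mapsto [g,h]$. Because $\e(G')\mid \e(G)$, it suffices to show $\e(G\wedge G)\mid \e(G)$, and since $G\wedge G$ is nilpotent and generated by the symbols $g\wedge h$, I would control its exponent by computing $(g\wedge h)^{n}$ through the commutator identities of the exterior square together with the Hall--Petrescu collection formula and the lower central structure of $G\wedge G$. Collecting, $(g\wedge h)^{n}$ equals the leading term $g^{n}\wedge h$ times a product of exterior commutators of strictly higher weight, each weighted by a binomial coefficient $\binom{n}{i}$; choosing $n=\e(G)$ annihilates the leading term, so the whole problem reduces to the vanishing of these higher-weight correction factors.

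I would organize that vanishing by induction on the nilpotency class $c$ of $G$, treating $p$ odd first. The base case $c\le 2$ is classical: there the corrections involve only commutators of weight $\ge 3$, which vanish, and the identity $[x,y]^{p}=[x^{p},y]$ valid in class $2$ for odd $p$ yields $\e(M(G))\mid \e(G)$ directly. For the inductive step I would put $Z=\gamma_c(G)$, which is central, and invoke the Ganea exact sequence $(G/Z)^{\mathrm{ab}}\otimes Z\to M(G)\to M(G/Z)$ attached to $1\to Z\to G\to G/Z\to 1$, in which the image of the first map is exactly the kernel of the second. By induction $\e(M(G/Z))\mid \e(G/Z)\mid \e(G)$, while the tensor term has exponent dividing $\e(G)$ because both of its factors do; thus $M(G)$ sits in an extension whose outer terms are $\e(G)$-torsion.

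The hard part will be that this extension need not preserve exponents: a priori it gives only $\e(M(G))\mid \e(G)^{2}$, and even a careful collection argument lowers this, across a range of classes, merely to $\e(M(G))\mid p\,\e(G)$ — precisely because each coefficient $\binom{\e(G)}{i}$ carries a single surplus factor of $p$ in the ``bad'' graded layers. To close the gap to $\e(M(G))\mid \e(G)$ unconditionally I would need a uniform power-commutator identity showing that every higher-weight correction term is itself killed by $\e(G)$ — equivalently, that the central extension above splits on the relevant torsion — with no hypothesis bounding $c$ or $\e(Z(G))$. Supplying such an identity, together with the separate and more delicate commutator analysis forced at $p=2$, is the crux on which the unconditional statement turns.
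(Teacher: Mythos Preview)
The statement you are attempting to prove is not a theorem in the paper but Schur's exponent conjecture, and the paper itself records that it is \emph{false}: there are $2$-groups $G$ of order $2^{68}$ (and even $2^{11}$) with $\e(G)=4$ and $\e(H_2(G,\mathbb{Z}))=8$. Your reduction to Sylow subgroups is correct, but precisely because of those counterexamples the assertion ``$\e(M(P))\mid \e(P)$ for every finite $p$-group $P$'' that you set out to prove is false at $p=2$. Hence the ``crux'' you identify at the end --- producing a uniform power-commutator identity that kills all the higher-weight correction terms unconditionally --- is not merely hard but impossible; no such identity exists, since it would prove a false statement.

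Concretely, your collection argument gives only $\e(G\wedge G)\mid p\,\e(G)$ in favourable ranges of class, which is exactly Conjecture~\ref{C3} of the paper, and the Ganea/extension step you sketch yields at best $\e(M(G))\mid \e(G)^{2}$ in general, which is Conjecture~\ref{C2}. These weaker bounds are the genuine targets; the paper establishes them only under additional hypotheses (bounded nilpotency class, $\e(Z(G))=p$, small $\e(G/Z(G))$, abelian or powerful Frattini/commutator subgroups, etc.). Your outline is a reasonable framework for attacking those restricted cases, but as an approach to the unconditional Conjecture~\ref{C1} it must fail.
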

 In \cite{BKW}, the authors found a counterexample to this conjecture. Their counterexample involved a $2$-group of order $2^{68}$ with $\e(G)=4$ and $\e(H_2(G,\mathbb{Z}))=8$. In \cite{PM1}, the author mentions another counterexample to Schur's conjecture. His counterexample involves a $2$-group of order $2^{11}$ with $\e(G)=4$ and $\e(H_2(G,\mathbb{Z}))=8$. In \cite{PM5}, the author conjectures the following
 \begin{conj}\label{C2}
 if  G is a finite group, then $\e(H_2(G,\mathbb{Z})) \mid  (\e(G))^2$.  
\end{conj}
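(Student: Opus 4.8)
The plan is to reduce Conjecture~\ref{C2} to finite $p$-groups and then induct on nilpotency class. For the reduction, $H_2(G,\mathbb{Z})$ is finite and splits as $\bigoplus_{p}H_2(G,\mathbb{Z})_{(p)}$; for a Sylow $p$-subgroup $P\le G$ the restriction map $H_2(G,\mathbb{Z})_{(p)}\to H_2(P,\mathbb{Z})$ is split injective by the transfer, and the $p$-part of $\e(G)$ equals $\e(P)$. So it suffices to prove $\e(H_2(P,\mathbb{Z}))\mid\e(P)^{2}$ for every finite $p$-group $P$, and we assume henceforth that $G$ is such a group, of class $c$.

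Now induct on $c$. If $c=1$ then $G$ is abelian, $H_2(G,\mathbb{Z})\cong\textstyle\bigwedge^{2}G$, and $\e(H_2(G,\mathbb{Z}))\mid\e(G)$. If $c>1$, set $Z=\gamma_{c}(G)\subseteq Z(G)\cap[G,G]$ and $Q=G/Z$, of class $c-1$; Ganea's exact sequence for $1\to Z\to G\to Q\to 1$,
\[
    G^{\mathrm{ab}}\otimes Z \ \longrightarrow \ H_2(G,\mathbb{Z}) \ \longrightarrow \ H_2(Q,\mathbb{Z}) \ \longrightarrow \ Z \ \longrightarrow \ 0,
\]
exhibits $H_2(G,\mathbb{Z})$ as an extension of a subgroup of $H_2(Q,\mathbb{Z})$ by a quotient of $G^{\mathrm{ab}}\otimes Z$. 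Since $\e(G^{\mathrm{ab}}\otimes Z)\mid\e(G)$ and, inductively, $\e(H_2(Q,\mathbb{Z}))\mid\e(G)^{2}$, crude multiplicativity of exponents in extensions yields only $\e(H_2(G,\mathbb{Z}))\mid\e(G)^{3}$, and iterating down the lower central series only $\e(H_2(G,\mathbb{Z}))\mid\e(G)^{c-1}$. The content of the argument is to show that the successive Ganea maps \emph{telescope} instead of accumulating: that for a suitable central filtration the image of each Ganea map is already annihilated by $\e(G)$ and the obstruction classes in consecutive layers cancel, so that only two factors of $\e(G)$ survive. This is a statement about the $p$-th power map on the layers of a free nilpotent group, which one attacks by commutator collection and the power--commutator identities available in $p$-groups of small class; this is precisely what lets the induction go through for class at most $p+1$, for $\e(Z(G))=p$, and, running along the derived series instead, for solvable $G$ with $\e(G)=p$.

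The hard part is controlling this cancellation uniformly in $c$. Each layer of the induction costs a factor of $\e(G)$, so collapsing all but two of them amounts to understanding the $p$-power operations on a free Burnside group of the relevant class — a Lazard-type, $p$-adic Lie-algebra phenomenon that is tractable for class $\le p+1$ but genuinely subtle beyond it, and for $p=2$ the counterexamples to Conjecture~\ref{C1} show that the second factor of $\e(G)$ cannot be dropped and that no naive uniform argument can succeed. For exponent $5$ on two or three generators we bypass the induction altogether: the ambient free groups are finite and their associated Lie rings are explicitly understood, so $\e(H_2(G,\mathbb{Z}))$ — equivalently the exponent of $\ker(G\wedge G\to G)$ — can be bounded directly by $25=\e(G)^{2}$. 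In this way the strategy yields Conjecture~\ref{C2} under such extra hypotheses (and polynomial bounds close to it in the remaining cases), while the conjecture in general remains open.
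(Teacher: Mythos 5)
The statement you are asked to prove is Conjecture~\ref{C2}, which is an \emph{open conjecture}: the paper does not prove it in general, and neither do you. Your write-up is honest about this (``the conjecture in general remains open''), but that means it is not a proof of the statement. The concrete gap is the step you label ``telescoping'': after reducing to $p$-groups and running Ganea's sequence down the lower central series, the crude bound is $\e(H_2(G,\mathbb{Z}))\mid \e(G)^{c-1}$, and the entire content of the conjecture is the claim that all but two factors of $\e(G)$ cancel. You assert this cancellation (``the obstruction classes in consecutive layers cancel, so that only two factors of $\e(G)$ survive'') without any argument; invoking ``a Lazard-type, $p$-adic Lie-algebra phenomenon'' is not a proof, and no such uniform cancellation is known --- if it were, the conjecture would be a theorem. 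The preliminary reductions you do carry out (restriction to Sylow subgroups via transfer, $H_2$ of an abelian group being $\bigwedge^2 G$, $\e(G^{\mathrm{ab}}\otimes Z)\mid\e(G)$) are correct but standard, and the first of these is exactly the remark the paper already makes when it notes that Conjecture~\ref{C3} would imply Conjecture~\ref{C2} via Theorem~4 of Chapter~IX of \cite{JPS}.

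What the paper actually does is prove Conjecture~\ref{C2} only for the restricted classes listed in Theorem~B, and its technique there is different from your Ganea-sequence induction: it works with the nonabelian exterior square $G\wedge G$ and the identification $M(G)=\ker(G\wedge G\to G')\cong H_2(G,\mathbb{Z})$, filters by subgroups such as $G^p$, $Z(G)$, $\phi(G)$, $G'$, or terms of the derived series to get exact sequences of exterior products, and then controls the exponent of the image of $N\wedge G$ by explicit commutator-collection identities in the Rocco group $\gamma(G)$ (Lemmas~\ref{L:4.1}, \ref{L:4.2}, \ref{L:3.2}, \ref{L:5.1}, \ref{L:5.2}) valid in bounded nilpotency class. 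Your sketch gestures at the same special cases (class at most $p+1$ with $\e(Z(G))=p$, solvable of exponent $p$, two- and three-generator groups of exponent $5$) but supplies no details for them either; for instance the exponent-$5$ case in the paper rests on the class bound of \cite{HNV} combined with Theorem~6.5 of \cite{APT}, neither of which appears in your argument. As a proof of the stated conjecture the proposal therefore fails, and as a proof of the special cases it is only an outline.
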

 
 The authors of \cite{APT} conjecture that,
 \begin{conj}\label{C3}
 if  G is a finite p-group, then $\e(H_2(G,\mathbb{Z})) \mid p\ \e(G)$.  
\end{conj}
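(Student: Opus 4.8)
\medskip
\noindent\textbf{Sketch of a proof strategy} (for Conjecture~\ref{C3}, and for the bounded-class results announced above).
Write $M(G)=H_2(G,\mathbb{Z})$. The plan is to pass from $M(G)$ to the nonabelian exterior square via the central extension
\[
1\longrightarrow M(G)\longrightarrow G\wedge G\xrightarrow{\ \kappa\ }\gamma_2(G)\longrightarrow 1 ,
\]
so that $\e(M(G))\mid\e(G\wedge G)$, and then to bound $\e(G\wedge G)$ by induction on the nilpotency class $c$ of $G$. For a finite $p$-group $G$ the exterior square $G\wedge G$ is again a finite $p$-group, nilpotent of class at most $c$; running the induction with $\e(G\wedge G)$, rather than with $\e(M(G))$, as the quantity being controlled is convenient because $G\wedge G$ is an honest group inside which commutator calculus is available, and because it behaves well under the quotients taken below.

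For the inductive step, put $N=\gamma_c(G)\le Z(G)$ and $Q=G/N$, of class $c-1$. The Ganea exact sequence of $1\to N\to G\to Q\to 1$,
\[
N\otimes_{\mathbb{Z}}G^{\mathrm{ab}}\longrightarrow M(G)\longrightarrow M(Q)\longrightarrow N\longrightarrow G^{\mathrm{ab}}\longrightarrow Q^{\mathrm{ab}}\longrightarrow 0 ,
\]
gives $\e(M(G))\mid\e\!\left(N\otimes_{\mathbb{Z}}G^{\mathrm{ab}}\right)\cdot\e(M(Q))$, and $\e\!\left(N\otimes_{\mathbb{Z}}G^{\mathrm{ab}}\right)$ divides $\gcd(\e N,\e G^{\mathrm{ab}})\mid\e(G)$. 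Used blindly $c-1$ times, this only recovers the classical estimate $\e(M(G))\mid\e(G)^{c-1}$; the real issue is to show that the successive ``tensor'' contributions $\gamma_i(G)\otimes G^{\mathrm{ab}}$ do not genuinely multiply, so that across the whole tower only a single extra factor of $p$ beyond $\e(G)$ is ever spent.

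This is the heart of the matter, and the step I expect to be the main obstacle: a direct exponent estimate for $G\wedge G$ by commutator calculus. I would filter $G\wedge G$ by its lower central series and bound each factor, using the divisibilities $p\mid\binom{p^{k}}{i}$ for $0<i<p$. For a generator $x\wedge y$, expanding $(x\wedge y)^{p^{k}}$ by the Hall--Petrescu collection formula inside $G\wedge G$ shows, for $p$ odd, that the higher-weight terms are annihilated up to a short tail, and the hypothesis on the class ($7$, respectively $\le p+1$) limits to one the number of stages at which a genuine loss of a factor of $p$ can occur. Writing $\e(G)=p^{e}$, the target is $(G\wedge G)^{p^{e+1}}=1$, established by descending the lower central series of $G\wedge G$: at each level a $p^{e}$-th power already lies one step deeper, and one further $p$ destroys the defect. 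Under the stronger hypothesis $\e(Z(G))=p$ with $\mathrm{cl}(G)\le p+1$ the argument is cleaner, since then $N=\gamma_c(G)$ has exponent $p$, so $N\otimes_{\mathbb{Z}}G^{\mathrm{ab}}$ contributes only $p$, and one feeds in the known exponent bounds for $p$-groups of small class as the base of the induction.

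The remaining assertions follow by bookkeeping once the bounded-class machinery is available. For a solvable $G$ with $\e(G)=p$ and derived length $d$, one inducts on $d$ through the Ganea sequence of $1\to\gamma_2(G)\to G\to G^{\mathrm{ab}}\to 1$: here $G^{\mathrm{ab}}$ is elementary abelian, so $M(G^{\mathrm{ab}})=\wedge^{2}G^{\mathrm{ab}}$ and the tensor terms have exponent $p$, while the inductive input $\e(M(\gamma_2(G)))\mid p^{\,d-2}$ makes the product telescope to exactly $p^{\,d-1}$. For a finite $2$- or $3$-generator group $G$ of exponent $5$, $G$ is a quotient of the finite group $R(k,5)$ ($k\le 3$) supplied by the restricted Burnside problem; its class can exceed $7$, so the class-bounded theorem does not apply verbatim, but the exterior-square estimate can be pushed through by hand using the known lower central data of $R(2,5)$ and $R(3,5)$, giving $\e(M(G))\mid 25=(\e(G))^{2}$. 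Finally, the cases $\e(G/Z(G))\in\{2,3,6\}$ of Schur's conjecture~\ref{C1} are handled via the Ganea sequence of $1\to Z(G)\to G\to G/Z(G)\to 1$ together with the explicit structure of groups with such a small central quotient, which forces $\e(M(G))\mid\e(G)$.
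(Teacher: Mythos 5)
First, note that the statement you are addressing is labelled a \emph{conjecture} in the paper; the paper does not prove it in general, only for the special classes listed in Theorem C (odd $p$-groups of class $7$, $p$-groups with $\e(Z(G))=p$ and class at most $p+1$, and a few structural families). Your text is likewise only a strategy sketch, and it has a genuine, self-acknowledged gap exactly where all the work lies. The assertion that ``across the whole tower only a single extra factor of $p$ beyond $\e(G)$ is ever spent'' \emph{is} the conjecture: iterating the Ganea sequence over the lower central series gives $\e(M(G))\mid \e(G)^{c-1}$ and nothing better, and you supply no mechanism by which the successive contributions $\gamma_i(G)\otimes_{\mathbb{Z}} G^{\mathrm{ab}}$ fail to multiply. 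The proposed Hall--Petrescu collection of $(x\wedge y)^{p^k}$ inside $G\wedge G$ is also insufficient even in principle: bounding the orders of powers of the \emph{generators} $x\wedge y$ does not bound $\e(G\wedge G)$, since $G\wedge G$ need not be regular; one must also control powers of arbitrary products of simple wedges. That is precisely the part the paper carries out and your sketch omits.

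For comparison with what the paper actually does in the cases it settles: it works inside Rocco's group $\gamma(G)$, of class at most $c+1$ when $G$ has class $c$, and derives explicit power-commutator identities for $[g^n,h]$ and ${}^{g^n}[g,h]$ in class-$8$ groups (Lemma~\ref{L:4.2}). Combined with exponent results from \cite{APT}, these show both that $(g^p\wedge h)^{p^n}=1$ \emph{and} that products $\bigl((g_1^p\wedge h_1)(g_2^p\wedge h_2)\bigr)^{p^n}=1$ (Lemmas~\ref{L:3.2} and~\ref{L:5.2}), whence $\e\bigl(\mathrm{im}(G^p\wedge G)\bigr)\mid \e(G)$; the quotient $G/G^p\wedge G/G^p$ contributes the remaining factor of $p$ by Moravec's exponent-$p$ result. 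For the case $\e(Z(G))=p$ and class at most $p+1$, the paper quotients by the full center, not by $\gamma_c(G)$, and uses the nontrivial fact that $\e(Z(G/Z(G)))$ still divides $\e(Z(G))$ so that the induction closes; your version with $N=\gamma_c(G)$ does not obviously close, because the center of $G/\gamma_c(G)$ may have larger exponent. Your concluding remarks on the exponent-$5$ and $\e(G/Z(G))\in\{2,3,6\}$ cases are likewise placeholders: the paper handles the former by the Hughes--Newman--Vaughan-Lee class bound together with Theorem 6.5 of \cite{APT}, and the latter by the sequence $G^{r^m}\wedge G\to G\wedge G\to G/G^{r^m}\wedge G/G^{r^m}$ together with Lemma~\ref{L:5.3}, not by any structure theory of small central quotients. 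To turn your sketch into a proof of any case of Theorem C, the missing ingredient is the explicit commutator calculus of Sections 2--4, or an equivalent substitute.
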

Clearly the counterexamples for Schur's conjecture given by the authors of \cite{BKW} and \cite{PM1} are not counterexamples for Conjecture \ref{C3}. In this paper, we verify the veracity of Conjecture $3$ for all odd order groups of nilpotency class 7 and we also prove Conjecture \ref{C1} for certain classes of groups. If Conjecture \ref{C3} is true, then using a standard argument given in Theorem 4, Chapter IX of \cite{JPS}, it will follow that $\e(H_2(G,\mathbb{Z})) \mid (\e(G))^2$ for any finite group $G$. Hence Conjecture $3$ is a generalization of Conjecture \ref{C2}.\\

 The following results have been achieved in this paper towards proving these three conjectures. 
\begin{thmA}
\begin{itemize}
Conjecture \ref{C1} holds for the following classes of groups :
\item[$(i)$] A group $G$ with $\e(G/Z(G)) = p$ and $p \in \{2,3\}$.
\item[$(ii)$] A finitely generated group $G$ such that $\e(G/Z(G)) = 6$. 
 \item[$(iii)$] An odd $p$-group with an abelian frattini subgroup. 
 \item[$(iv)$] A $p$-group such that the commutator subgroup of $G$ is cyclic.

\end{itemize}
 \end{thmA}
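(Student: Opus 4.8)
\emph{Parts (i) and (ii).} The plan is to reduce Schur's conjecture for $G$ to the same statement for finite $3$-groups with a tiny central quotient. Since $M(G):=H_2(G,\mathbb{Z})$ is abelian it splits into primary components, and for each prime $q$ the component $M(G)_q$ is a direct summand of $M(P)$ with $P\in\mathrm{Syl}_q(G)$ (restriction is split injective on the $q$-part, split by corestriction), so $\e(M(G)_q)\mid\e(M(P))$. As $P\cap Z(G)\le Z(P)$ and $P/(P\cap Z(G))$ maps isomorphically to a $q$-subgroup of $G/Z(G)$, the exponent $\e(P/Z(P))$ is a power of $q$ dividing $\e(G/Z(G))$; when $\e(G/Z(G))\in\{2,3,6\}$ this is $1$ for $q\ge 5$ (so $P$ is abelian and $\e(M(P))\mid\e(P)$), at most $2$ for $q=2$ (so $\mathrm{cl}(P)\le 2$ and $\e(M(P))\mid\e(P)$ by the classical class-$2$ case of the conjecture), and at most $3$ for $q=3$. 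Recombining the primary components, everything comes down to the $q=3$ case. For (ii) the finite-generation hypothesis together with the finiteness of the free Burnside groups of exponent $6$ makes $G/Z(G)$ finite, so the Sylow reduction applies (the statement being vacuous when $\e(G)=\infty$), and the primes $2$ and $3$ are handled as above.

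\emph{The exponent-$3$ core.} It remains to prove Schur's conjecture for a finite $3$-group $P$ with $\e(P/Z(P))\mid 3$; by Burnside's theorem on exponent-$3$ groups, $\mathrm{cl}(P/Z(P))\le 3$ and hence $\mathrm{cl}(P)\le 4$. I would combine the Ganea exact sequence $P^{ab}\otimes Z(P)\to M(P)\to M(P/Z(P))\to[P,P]\cap Z(P)\to 1$ with the bound $\e(M(Q))\mid 3$ for $3$-groups $Q$ of exponent $3$ (a metabelian case). The naive estimate read off this sequence yields only $\e(M(P))\mid 3\,\e(P)$ (i.e.\ Conjecture~\ref{C3}), so the real point is that the sub-term (a quotient of $P^{ab}\otimes Z(P)$) and the quotient-term (a subgroup of $M(P/Z(P))$) of $M(P)$ cannot both realise the full $3$-adic valuation. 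The hard part will be establishing this, which I expect to require a Hall--Petrescu collection computation in class $\le 4$: the relation $x^3\in Z(P)$ binds $[x,y]^3$, $[x,y,x]^3$ and $\gamma_4(P)$, and from these relations one should extract $\e(P\wedge P)\mid\e(P)$ directly. The Lazard correspondence is unavailable once the class reaches the prime, so this exterior-square estimate has to be done by hand; I regard it as the principal obstacle of the paper.

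\emph{Part (iii).} Here $\Phi(G)=[G,G]G^p$ is abelian, so $G$ is metabelian and $\Phi(G)$ is a module over $V:=G/\Phi(G)$ (elementary abelian), with $V$ acting through a quotient of exponent $p$. The plan is to run the Lyndon--Hochschild--Serre spectral sequence of $1\to\Phi(G)\to G\to V\to 1$: the graded pieces of $M(G)$ are subquotients of $\Lambda^2 V$ (exponent $p$), of $H_1(V,\Phi(G))$, and of $(\Lambda^2\Phi(G))_V$ (exponent dividing $\e(\Phi(G))\mid\e(G)$). The delicate term is $H_1(V,\Phi(G))$, which a priori is only annihilated by $|V|$; the plan is to use that $\Phi(G)$ is not an arbitrary module but is generated as a normal subgroup by the commutators and $p$-th powers of a generating set of $G$, and that $V$ acts of exponent $p$ (the identity $(xy)^p\equiv x^py^p$ mod $\gamma_3$ entering through $p$ odd), to pin $\e(H_1(V,\Phi(G)))$ down to a divisor of $\e(G)$, and then to check that the three graded pieces do not assemble so as to overshoot $\e(G)$.

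\emph{Part (iv).} When $[G,G]$ is cyclic, $G$ is metabelian, the lower central series is a descending chain of subgroups of the cyclic group $[G,G]$, and $G/C_G([G,G])$ embeds in $\mathrm{Aut}([G,G])$ (cyclic for $p$ odd); such groups are close to metacyclic and their Schur multipliers admit explicit descriptions. The plan is to use this structure — after passing, if convenient, to an isoclinic representative with $Z(G)\le[G,G]$ — together with the spectral sequence of $1\to[G,G]\to G\to G/[G,G]\to 1$, in which $H_2([G,G])=0$ (cyclic) and $\e(M(G/[G,G]))\mid\e(G/[G,G])\mid\e(G)$ (multiplier of an abelian group), reducing matters to the coefficient term $H_1(G/[G,G],[G,G])$, which the cyclic structure of $[G,G]$ and of the action controls. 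The subtle point, exactly as in (iii), is that the transgression into $[G,G]$ and this coefficient term must not both carry the maximal $p$-power, which is why the crude product bound is insufficient and the structural input is genuinely needed.
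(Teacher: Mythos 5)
Your argument is incomplete at exactly the points you flag as hard, and those points are not actually hard once the filtration is chosen correctly. For (i)--(ii) you reduce, via Sylow theory, to Schur's conjecture for a finite $3$-group $P$ with $\e(P/Z(P))\mid 3$, and then leave that core case open (``I expect to require a Hall--Petrescu collection computation\dots I regard it as the principal obstacle''). The paper closes this case in a few lines and never needs the Sylow reduction: filter by $G^p$ rather than by $Z(G)$. Since $\e(G/Z(G))=p$ forces $G^p\le Z(G)$, every generator $g^p\wedge h$ of the image of $G^p\wedge G$ in $G\wedge G$ has a central left entry, so $(g^{p}\wedge h)^{t}=g^{pt}\wedge h$ and $[g_1^p\wedge h_1,g_2^p\wedge h_2]=[g_1^p,h_1]\wedge[g_2^p,h_2]=1$; writing $\e(G)=pq$ this gives $\e(\mathrm{im}(G^p\wedge G))\mid q$ (Lemma \ref{L:5.3}), while $G/G^p$ has exponent $p\in\{2,3\}$, so $\e(G/G^p\wedge G/G^p)\mid p$ by Moravec's result, and the product is $\e(G)$. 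Your Ganea-sequence framing with $Z(P)$ in the kernel is precisely what manufactures the spurious extra factor of $3$: the subgroup $Z(P)$ is too large, whereas $P^3$ is central \emph{and} consists of $p$-th powers, so its wedges have small order. The same device (with $G^6\le Z(G)$ and Moravec's exponent-$6$ theorem applied to $G/G^6$) disposes of (ii); note also that your appeal to ``the classical class-$2$ case'' for the $2$-part of the Sylow reduction is essentially an instance of the statement being proved and would itself need justification.

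For (iii) and (iv) your spectral-sequence plans stall on the coefficient terms $H_1(V,\Phi(G))$ and $H_1(G/G',G')$ respectively, which you acknowledge you cannot yet bound; as written these parts are programmes, not proofs. The paper avoids those terms entirely by the same change of filtration, $1\to G^p\to G\to G/G^p\to 1$. In (iii), $G^p\le\Phi(G)$ is abelian normal with $\e(G^p)\mid\e(G)/p$, so $\e(G^p\wedge G)\mid\e(G)/p$ for odd $p$ by Lemma 2.5 of \cite{APT}, while $G/G^p$ is metabelian of exponent $p$, hence of class at most $p$, hence $\e(G/G^p\wedge G/G^p)\mid p$ (Lemma \ref{L:5.4}); the two factors multiply to exactly $\e(G)$, and no cross-term ever appears. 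In (iv), $G'$ cyclic makes $G$ regular and metabelian, Theorem 5.2 of \cite{APT} gives $\e(G^p\wedge G)\mid\e(G)/p$, and the quotient is handled as before. So the structural inputs you identify (metabelianness, the module structure of $\Phi(G)$, the cyclicity of $G'$) are indeed what is used, but the ``two pieces must not both carry the maximal $p$-power'' tension you describe never arises: each factor is bounded separately and the crude product bound already equals $\e(G)$.
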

 
 \begin{thmB}
 \begin{itemize}
 Conjecture \ref{C2} is valid for the following classes of groups:
\item[$(i)$]  A finite $m$ generator group of exponent 5 and $2\leq m < 4$.
\item[$(ii)$] An odd solvable group of exponent $p$ and derived length $3$.
\item[$(iii)$]  A $p$-group having an abelian normal subgroup $N$ of index $p^l$, $p \neq 2$ and $l$ is less than max $\{7,p+2\}$.
 \item[$(iv)$] An odd $p$-group whose commutator subgroup is powerful. 
  \item[$(v)$] An odd $p$-group $G$ such that $\gamma_{p+1}(G)$ is powerful. 
\end{itemize}
 \end{thmB}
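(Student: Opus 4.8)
The overall plan is to route each of the five cases into one of the two principal estimates already established: the $p$-group bound $\e(H_2(G,\mathbb Z))\mid p\,\e(G)$ (a case of Conjecture \ref{C3}), valid for odd $p$-groups of nilpotency class at most $7$, and more generally at most $p+1$ when $\e(Z(G))=p$; and the bound $\e(H_2(G,\mathbb Z))\mid(\e(G))^{d-1}$ for solvable groups of exponent $p$ and derived length $d$. The first bound already yields Conjecture \ref{C2} for \emph{any} $p$-group to which it applies, because $\e(G)=p^{a}$ with $a\ge 1$ forces $p\,\e(G)=p^{a+1}\mid p^{2a}=(\e(G))^{2}$; so for (iii)--(v) it is enough to place $G$ within the reach of the class-$7$ / class-$(p+1)$ bound, while (i) and (ii) will rest on the solvable bound.

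Case (ii) is immediate: derived length $3$ together with exponent $p$ gives $\e(H_2(G,\mathbb Z))\mid(\e(G))^{2}$ at once. For (i), a finite $m$-generator group $G$ of exponent $5$ with $m\in\{2,3\}$ is a quotient of the Burnside group $B(m,5)$, a finite (hence solvable) $5$-group; invoking the known fact that $B(2,5)$ and $B(3,5)$ are solvable of derived length $3$, one gets that $G$ is solvable of derived length at most $3$ and of exponent dividing $5$, so the solvable bound gives $\e(H_2(G,\mathbb Z))\mid(\e(G))^{2}$ (trivially if $G$ is abelian). This is where the restriction $m<4$ enters: for four or more generators no such bound on the derived length is available.

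For (iii)--(v) the unifying point is that each hypothesis forces a term $\gamma_{c}(G)$ of the lower central series to be powerful for some small $c$, and then $G/\gamma_{c}(G)$ automatically has class at most $c-1$. Indeed, in (iv) one takes $c=2$: $\gamma_{2}(G)=G'$ is powerful and $G/G'$ is abelian. In (v) one takes $c=p+1$: $\gamma_{p+1}(G)$ is powerful and $G/\gamma_{p+1}(G)$ has class at most $p$. In (iii), if $N\trianglelefteq G$ is abelian with $|G/N|=p^{l}$ and $l<\max\{7,p+2\}$, then $\gamma_{l}(G/N)=1$ forces $\gamma_{l}(G)\le N$, which is abelian and hence powerful, while $G/\gamma_{l}(G)$ has class at most $l-1\le\max\{5,p\}$. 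In every case $c\le\max\{7,p+1\}$, so $G/\gamma_{c}(G)$ falls under the class-$7$ / class-$(p+1)$ bound. I would then analyse the extension $1\to\gamma_{c}(G)\to G\to G/\gamma_{c}(G)\to 1$ through the Ganea and five-term exact sequences: the quotient is handled by that bound, the powerful kernel $\gamma_{c}(G)$ satisfies $\e(H_2(\gamma_{c}(G),\mathbb Z))\mid\e(\gamma_{c}(G))$ by Moravec's theorem on powerful $p$-groups, and the $G$-action contributes at most the exponent of a section of $\gamma_{c}(G)$; splicing these should give $\e(H_2(G,\mathbb Z))\mid p\,\e(G)\mid(\e(G))^{2}$.

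The step I expect to be the main obstacle is precisely this splicing in (iii)--(v): forcing the three exponent contributions to collapse into one extra factor of $p$, rather than into $(\e(G))^{2}$ times an action term, demands tight control of the $G$-module structure of $\gamma_{c}(G)/[\gamma_{c}(G),G]$ and of the associated relative multiplier. For the small primes, notably $p=3$ where the Lazard/Lie-ring correspondence reaches only class $p-1$, this is exactly where the explicit treatment of $p$-groups of class up to $7$ from the earlier sections must be brought in; absorbing all those cases uniformly is the technical heart of the matter.
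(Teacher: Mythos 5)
Your part (ii) is fine and coincides with the paper (the solvable theorem with $d=3$ gives $\e(G\wedge G)\mid\e(G)^{2}$ at once). The genuine gap is in (iii)--(v): what you present there is a plan whose decisive step you yourself flag as unresolved, namely the ``splicing'' of the three homological contributions, and that step is not a technicality --- it is the whole proof, and the route you sketch does not close. The five-term sequence runs $H_2(G)\to H_2(G/N)\to N/[G,N]\to\cdots$ and so bounds $H_2(G/N)$ in terms of $H_2(G)$, not the reverse; to go the other way one needs the Lyndon--Hochschild--Serre filtration or a Ganea-type sequence, whose graded pieces contribute $\e(H_2(G/N))$, $\e(N^{ab})$ \emph{and} $\e(H_2(N))$, i.e.\ up to $\e(G)^{3}$ rather than the required $\e(G)^{2}$, let alone the $p\,\e(G)$ you aim for. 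The paper's argument is far more direct: from the exactness of $N\wedge G\to G\wedge G\to G/N\wedge G/N\to 1$ one gets $\e(G\wedge G)\mid\e(N\wedge G)\,\e(G/N\wedge G/N)$; the quotient factor divides $\e(G/N)$ because $G/N$ is abelian in (iv), of class at most $p$ in (v), and of class at most $\max\{5,p\}$ in (iii) (Theorems 3.11 and 4.6 of \cite{APT}); and the subgroup factor satisfies $\e(N\wedge G)\mid\e(N)$ by Lemma 2.5 of \cite{APT} when $N$ is abelian and by Theorem 5.2 of \cite{APT} when $N$ is powerful. Each factor contributes one copy of $\e(G)$, and Conjecture \ref{C2} follows with no relative multiplier, no module analysis, and no extra factor of $p$ to absorb.

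Part (i) also has an unsupported step: your reduction to the derived-length bound rests on the assertion that the finite $2$- and $3$-generator groups of exponent $5$ have derived length $3$. You give no source, and it does not follow from anything you cite; the relevant restricted Burnside group on three generators has nilpotency class close to $18$, and the general estimate of derived length in terms of class does not force derived length $3$ there. (Note also that such a $G$ is a quotient of the \emph{restricted} Burnside group $R(m,5)$; the finiteness of $B(m,5)$ itself is not known.) The paper avoids derived length entirely: the class is less than $19$ for $m\le 3$ by \cite{HNV}, and Theorem 6.5 of \cite{APT}, whose exponent $1+\ceil{\log_{4}(c/6)}$ equals $2$ in this range, gives $\e(G\wedge G)\mid\e(G)^{2}$ directly. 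Unless you can document the derived-length claim, replace that step by this class argument.
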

 
 \begin{thmC}
 \begin{itemize}
 Conjecture \ref{C3} remains true for the following classes of groups :
\item[($i$)] An odd $p$-group of nilpotency class $7$.
\item[$(ii)$]  A $p$-group $G$ such that $\e(Z(G))=p$ and nilpotency class of $G$ is at most $p+1$.  
\item[$(iii)$]  A group having an abelian normal subgroup $N$ of index $p^2$, and $p \neq 2$.
 \item[$(iv)$] A $2$-group such that the frattini subgroup of $G$ is abelian. 
  \item[$(v)$] An odd $p$-group whose frattini subgroup is powerful. 
 
 \end{itemize}
 \end{thmC}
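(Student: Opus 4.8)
The plan is to prove the five parts separately, after assembling the machinery they have in common. The basic objects are Hopf's formula $H_2(G,\mathbb Z)\cong(R\cap[F,F])/[F,R]$ for a free presentation $1\to R\to F\to G\to 1$, and the exact sequence $1\to H_2(G,\mathbb Z)\to G\wedge G\to[G,G]\to 1$ linking the Schur multiplier to the nonabelian exterior square. The second tool is the family of reduction sequences attached to an extension $1\to N\to G\to Q\to 1$: Ganea's sequence when $N$ is central (whose key content is that $\ker\big(H_2(G,\mathbb Z)\to H_2(Q)\big)$ is a quotient of $Q^{\mathrm{ab}}\otimes N$), and more generally the low-degree part of the Lyndon--Hochschild--Serre spectral sequence, which shows that $\e\big(H_2(G,\mathbb Z)\big)$ divides a product of exponents coming from $H_2(N)$, $H_1(Q,N^{\mathrm{ab}})$ and $H_2(Q)$, with a genuine saving when $N$ is abelian or the differentials $d_2,d_3$ do not vanish. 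The third tool is commutator collection: the Hall--Petrescu identity $(xy)^n=x^ny^n\,c_2^{\binom{n}{2}}c_3^{\binom{n}{3}}\cdots$ with $c_i\in\gamma_i(\langle x,y\rangle)$, applied inside $F/[F,R]$ and truncated by the nilpotency class; this already yields the classical fact that Schur's conjecture holds for $p$-groups of class $<p$, and crossing the next threshold lets one further power of $p$ escape.

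For part (i) I would first dispose of the primes $p\ge11$: a $p$-group of class $7$ then has class $<p$, so $\e(H_2(G,\mathbb Z))\mid\e(G)$. The real content is $p\in\{3,5,7\}$, and there I would run the collection argument inside $F/[F,R]$ organised by the lower central series, keeping precise track of how much $p$-torsion accumulates up to class $7$ and showing that it amounts to at most one power of $p$ (heuristically, because $7<p^2$, only the first critical threshold is crossed). For $p=3$ it helps to record that a group of exponent $3$ has class at most $3$, so the class-$7$ case only involves $3$-groups with $\e(G)\ge9$, which weakens the divisibility that must be verified. For part (ii), the class being at most $p+1$ forces $\gamma_{p+1}(G)\le Z(G)$, hence $\e(\gamma_{p+1}(G))\mid p$; feeding this central subgroup into Ganea's sequence gives $\e(H_2(G,\mathbb Z))\mid p\cdot\e\big(H_2(G/\gamma_{p+1}(G))\big)$, and the quotient has class at most $p$, which is just past the classical range --- this is where the hypothesis $\e(Z(G))=p$ is spent, confining the remaining obstruction to an elementary abelian layer so that the collection argument for the quotient closes without losing a second power of $p$.

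Parts (iii), (iv) and (v) I would treat uniformly through an extension $1\to N\to G\to Q\to 1$ with $N$ abelian or powerful and $Q$ small: $Q$ of order $p^2$ in (iii) (sharpening Theorem B(iii) in the case $l=2$), and $Q=G/\Phi(G)$ elementary abelian in (iv) (the prime-$2$ companion of Theorem A(iii)) and (v) (sharpening Theorem B(iv),(v) by exploiting the elementary abelian Frattini quotient). In each case the inputs are benign: $\e(H_2(N))\mid\e(G)$, since $H_2(N)=\Lambda^2N$ when $N$ is abelian and, when $N$ is powerful with $p$ odd, $N$ satisfies Schur's conjecture because the subgroup $N^p=\{x^p:x\in N\}$ behaves well; $\e(H_2(Q))$ divides $p$ (it is $0$ or $C_p$ for $|Q|=p^2$ and is $\Lambda^2Q$ for $Q$ elementary abelian); and $\e(H_1(Q,N^{\mathrm{ab}}))$ divides $\e(Q)$, hence divides $p$ in (iv) and (v). The spectral sequence then delivers $\e(H_2(G,\mathbb Z))\mid p\,\e(G)$ provided one shows that the $H_1(Q,N^{\mathrm{ab}})$ and $H_2(Q)$ contributions cannot both survive to $E_\infty$ independently.

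The obstacle is the same in all five parts: the crude estimates --- naive collection in (i) and (ii), the spectral sequence in (iii)--(v) --- overshoot the target $p\,\e(G)$ by exactly one power of $p$, and the technical heart is to recover it. In (i) and (ii) this is the delicate bookkeeping of $p$-deficiencies in the Hall--Petrescu expansion (using $7<p^2$, respectively the bound on $\e(Z(G))$); in (iii)--(v) it is an explicit analysis of the differentials $d_2$ and $d_3$ for an extension whose quotient is a single elementary abelian layer (or two cyclic layers), together with, in (iv), careful tracking at the prime $2$, where Schur's conjecture itself can fail and so the extra factor of $2$ is indispensable.
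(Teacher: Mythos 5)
Your proposal is an outline whose hard steps are named but deferred, and in each part the step you postpone is the entire content of the proof. For part (i), the paper does not run a collection argument in $F/[F,R]$: it filters by $G^p$, quotes the bound $\e(G/G^p\wedge G/G^p)\mid p$ for the exponent-$p$ quotient, and controls $\e(\mathrm{im}(G^p\wedge G))$ by transporting the problem into Rocco's group $\gamma(G)$, which has class at most $8$ when $G$ has class $7$; there, explicit class-$8$ expansions of $[g^n,h]$ and $^{g^n}[g,h]$ (Lemma \ref{L:4.2}) are used to prove $(g^3\wedge h)^{\e(G)}=1$ and $(g^5\wedge h)^{\e(G)/5}=1$, together with the same bound for \emph{products} of such simple exteriors (Lemmas \ref{L:3.2} and \ref{L:5.2} -- this last point is essential and absent from your plan, since the image of $G^p\wedge G$ is not generated by elements of bounded order unless one also bounds products). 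This calculation occupies two full sections and is exactly the ``delicate bookkeeping'' you leave open; without it nothing is proved. Note also that your reduction only disposes of $p\ge 11$, whereas the paper disposes of $p\ge 7$ via the class-$\le p$ theorem (Theorem 3.11 of \cite{APT}); your plan would have to carry out the explicit collection for $p=7$ as well.

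For parts (iii)--(v) your Lyndon--Hochschild--Serre estimate produces three independent factors $\e(H_2(N))$, $\e(H_1(Q,N^{\mathrm{ab}}))$, $\e(H_2(Q))$, which overshoots $p\,\e(G)$, and you explicitly leave unproved the claim about $d_2,d_3$ that would recover the lost power of $p$; that claim is the theorem. The paper needs no differential analysis: the snake lemma applied to the exterior-square diagram gives the two-factor bound $\e(M(G))\mid \e(N\wedge G)\,\e(M(G/N))$, in which the single term $N\wedge G$ already packages both the $H_2(N)$ and $H_1(Q,N^{\mathrm{ab}})$ layers, and the decisive input is Lemma 2.5, respectively Theorem 5.2, of \cite{APT}: $\e(N\wedge G)\mid\e(N)$ for an abelian normal subgroup when $p$ is odd (with an extra factor $2$ when $p=2$), and $\e(\Phi(G)\wedge G)\mid\e(G)$ when $\Phi(G)$ is powerful. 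You neither prove nor cite a statement of this strength, and your surrogate bound $\e(H_1(Q,N^{\mathrm{ab}}))\mid\e(Q)$ is too weak to close the argument. For part (ii) your route (quotient by the central subgroup $\gamma_{p+1}(G)$, then Ganea) is genuinely different from the paper's induction along the upper central series, which uses Lemma \ref{L:1.4} for $\e(Z(G)\wedge G)$ and the fact that $\e(Z(G/Z(G)))\mid\e(Z(G))$ at every stage; your version could be completed for odd $p$ given the class-$\le p$ theorem, but as written it again ends with the unproved assertion that ``the collection argument for the quotient closes without losing a second power of $p$.''
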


One of the tools used in the proof of our main result is a construction introduced by R. Brown and J.-L. Loday in \cite{BL1} and \cite{BL2}, called the nonabelian tensor product $G\otimes H$. The nonabelian tensor product of groups is defined for a pair of groups that act on each other provided the actions satisfy the compatibility conditions of Definition \ref{D:1.1} below. Note that we write conjugation on the left, so $^gg'=gg'g^{-1}$ for $g,g'\in G$ and $^gg'\cdot g'^{-1}=[g,g']$ is the commutator of $g$ and $g'$.

\begin{definition}\label{D:1.1}
Let $G$ and $H$ be groups that act on themselves by conjugation and each of which acts on the other. The mutual actions are said to be compatible if
\begin{equation}
  ^{(^h g)}h_1=\; (^h(^g(^{h^{-1}}h_1)))  \;and\; ^{(^g h)}g_1=\ (^g(^h(^{g^{-1}}g_1))) \;\mbox{for \;all}\; g,g_1\in G, h,h_1\in H.
\end{equation}
\end{definition}

\begin{definition}\label{D:1.2}
If $G$ and $H$ are groups that act compatibly on each other, then the \textbf{nonabelian tensor product} $G\otimes H$ is the group generated by the symbols $g\otimes h$ for $g\in G$ and $h\in H$ with relations
\begin{equation}\label{E:1.1.1}
gg'\otimes h=(^gg'\otimes \;^gh)(g\otimes h),    
\end{equation}
\begin{equation}\label{E:1.1.2}
g\otimes hh'=(g\otimes h)(^hg\otimes \;^hh'),   
\end{equation}
\noindent for all $g,g'\in G$ and $h,h'\in H$.
\end{definition}
The nonabelian tensor square, $G\otimes G$, of a group $G$ is a special case of the nonabelian tensor product of a pair of groups $G$ and $H$, where $G=H$, and all actions are given by conjugation. 
There exists a homomorphism $\kappa : G\otimes G \rightarrow G^{\prime}$ sending $g\otimes h$ to $[g,h]$. Let $\nabla (G)$ denote the subgroup of $G\otimes G$ generated by the elements $x\otimes x$ for $x\in G$. The exterior square of $G$ is defined as $G\wedge G= (G\otimes G)/\nabla (G)$ and denote the induced homomorphism again by $\kappa : G\wedge G \rightarrow G^{\prime}$. Let $M(G):=\ker ( G\wedge G \rightarrow G' )$. It has been shown in \cite{CM} that $M(G)\cong H_{2}(G, \mathbb{Z})$, the second homology group of $G$ with integral coefficients.

\section{Preparatory Results}
Commutator formulae often used are compiled together into two Lemmata in order to make it easy for future reference.

Note that all the commutators are considered to be right normed and $[g,h] = ghg^{-1}h^{-1}$.
\begin{lemma}\label{L:1.1}
For $g,g_1,h,h_1 \in G$, we have
\begin{itemize}
\item[(i)]\begin{equation}\label{eq:1.1}
[gg_1,h] =\ ^g[g_1,h][g,h].
\end{equation}
\item[(ii)]\begin{equation}\label{eq:1.2}
[g,hh_1] = [g,h]^h[g,h_1].
\end{equation}
\item[(iii)]\begin{equation}\label{eq:1.3}
gh = [g,h]hg.
\end{equation}
\end{itemize}
\end{lemma}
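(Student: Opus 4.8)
The plan is to establish all three identities by direct computation, substituting the defining conventions $[x,y] = xyx^{-1}y^{-1}$ and $^{x}y = xyx^{-1}$ into each side and cancelling. No structural hypothesis on $G$ is used anywhere, so the only thing to watch is the bookkeeping of the word cancellations and consistency of the left-conjugation convention across the three parts.

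I would do (iii) first, since it is essentially the definition rearranged: starting from the right-hand side, $[g,h]hg = (ghg^{-1}h^{-1})hg = ghg^{-1}g = gh$. This is the identity that expresses multiplicatively how $[g,h]$ measures the failure of $g$ and $h$ to commute, and one can in fact bootstrap (i) and (ii) from it if one prefers to avoid raw expansion. For (i), I would expand the right-hand side as $^{g}[g_1,h]\,[g,h] = g(g_1 h g_1^{-1} h^{-1})g^{-1}\cdot ghg^{-1}h^{-1}$; the adjacent pair $g^{-1}\cdot g$ cancels, then $h^{-1}\cdot h$ cancels, leaving $g g_1 h g_1^{-1} g^{-1} h^{-1} = (gg_1)h(gg_1)^{-1}h^{-1} = [gg_1,h]$. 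Identity (ii) is the mirror computation: expanding $[g,h]\,^{h}[g,h_1] = ghg^{-1}h^{-1}\cdot h(gh_1g^{-1}h_1^{-1})h^{-1}$, the pair $h^{-1}\cdot h$ cancels and then $g^{-1}\cdot g$ cancels, leaving $g h h_1 g^{-1} h_1^{-1} h^{-1} = g(hh_1)g^{-1}(hh_1)^{-1} = [g,hh_1]$.

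Since every step is a finite cancellation in the free group on the letters involved, there is no real obstacle here; the only minor pitfall is keeping the conjugation convention $^{x}y = xyx^{-1}$ uniform. These are precisely the standard "commutator collection" formulae relating products of commutators to conjugates, so an equally acceptable alternative would be to cite a standard reference (for instance any treatment of commutator calculus or of the nonabelian tensor square) rather than reproduce the three-line verification.
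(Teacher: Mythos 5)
Your computations are correct and use exactly the paper's conventions ($[g,h]=ghg^{-1}h^{-1}$, $^{x}y=xyx^{-1}$); all three cancellations check out. The paper itself states this lemma without proof, treating these as standard commutator identities compiled for reference, so your direct expansion is precisely the omitted routine verification and nothing more needs to be said.
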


Since we are interested in computing the exponent of the Schur Multiplier, the following lemma which throws light on the exponent of a simple exterior when the elements involved commute, will often be recalled.  Moreover, for a group $G$ with center $Z(G)$, it helps in delimiting the exponent of $Z(G) \wedge G$ to the exponent of $Z(G)$. The proof follows easily via induction on $n$. 
\begin{lemma}\label{L:1.4}
Let $G$ be a group. Then for $g,h \in G$ and $n \in \mathbb{Z}$, we have $$(g^n \otimes h) = (g \otimes h)^n = (g \otimes h)^n$$ if $g$ and $h$ commutes.
\end{lemma}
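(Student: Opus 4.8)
I read the displayed identity as the claim that $g^{n}\otimes h = (g\otimes h)^{n} = g\otimes h^{n}$ for all $n\in\mathbb{Z}$ whenever $gh=hg$; the plan is a routine induction on $n$, carried out first for $n\ge 0$ and then bootstrapped to $n<0$. The point is that commutativity of $g$ and $h$ trivialises the conjugations occurring in the defining relations of $G\otimes G$: one has ${}^{g}h = ghg^{-1} = h$, ${}^{h}g = g$, and any power of $g$ still commutes with $h$. Substituting $g'=g^{k}$ in \eqref{E:1.1.1} and $h'=h^{k}$ in \eqref{E:1.1.2}, the relations therefore collapse, for such a pair, to $g^{k+1}\otimes h = (g^{k}\otimes h)(g\otimes h)$ and $g\otimes h^{k+1} = (g\otimes h)(g\otimes h^{k})$.

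First I would dispose of the base case $n=0$: putting $g=g'=1$ in \eqref{E:1.1.1} gives $1\otimes h = (1\otimes h)(1\otimes h)$, hence $1\otimes h = 1$, and symmetrically $g\otimes 1 = 1$ from \eqref{E:1.1.2}; thus $g^{0}\otimes h = 1 = (g\otimes h)^{0} = g\otimes h^{0}$. For the inductive step with $n\ge 1$, the first collapsed relation together with the hypothesis $g^{n-1}\otimes h = (g\otimes h)^{n-1}$ gives $g^{n}\otimes h = (g\otimes h)^{n-1}(g\otimes h) = (g\otimes h)^{n}$, and $g\otimes h^{n} = (g\otimes h)^{n}$ follows identically from the second collapsed relation.

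For $n<0$, write $n=-m$ with $m\ge 1$. Taking $g'=g^{-1}$ in \eqref{E:1.1.1} and using $1\otimes h = 1$ yields $1 = (g^{-1}\otimes h)(g\otimes h)$, that is, $g^{-1}\otimes h = (g\otimes h)^{-1}$; since $g^{-1}$ again commutes with $h$, applying the case already established for nonnegative exponents to the pair $(g^{-1},h)$ gives $g^{-m}\otimes h = (g^{-1}\otimes h)^{m} = (g\otimes h)^{-m}$, and $g\otimes h^{-m} = (g\otimes h)^{-m}$ follows symmetrically. I do not expect any genuine obstacle here; the only bookkeeping is to observe that commutativity passes to inverses and powers, so that the manipulations in $G\otimes G$ with $g^{-1}$ or $g^{k}$ in place of $g$ remain legitimate.
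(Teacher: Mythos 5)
Your proof is correct and follows exactly the route the paper indicates (it states only that the result "follows easily via induction on $n$"): the commutativity of $g$ and $h$ trivialises the conjugations in the defining relations \eqref{E:1.1.1} and \eqref{E:1.1.2}, and induction on $n\ge 0$ plus the observation $g^{-1}\otimes h=(g\otimes h)^{-1}$ handles all integers. Your write-up in fact supplies more detail (the $n=0$ base case and the negative exponents) than the paper bothers to record.
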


\begin{lemma}\label{L:1.5}
Let $N$ be a normal subgroup of $G$. Then, $\e(N\wedge G) \mid \e(N) \e(N \wedge N)$.
\end{lemma}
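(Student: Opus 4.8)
The plan is to prove $\e(N \wedge G) \mid \e(N)\,\e(N \wedge N)$ by exploiting the natural map $\alpha \colon N \wedge N \to N \wedge G$ induced by the inclusion $N \hookrightarrow G$, whose image is the subgroup $\langle n \wedge n' : n,n' \in N\rangle \trianglelefteq N \wedge G$. First I would recall (or establish) that this image is normal in $N\wedge G$ and that the quotient $(N\wedge G)/\operatorname{im}\alpha$ is abelian: indeed, working in $N \wedge G$, the defining relations let one compute $(n_1 \wedge g)(n_2 \wedge g)(n_1\wedge g)^{-1}(n_2 \wedge g)^{-1}$ and, modulo elements of the form $n \wedge n'$, show it is trivial, because any commutator of two generators $n_i \wedge g_i$ is expressible via Lemma~\ref{L:1.1}-type identities in terms of $\,{}^{g}n \wedge {}^{g}n'$ and similar symbols lying in $\operatorname{im}\alpha$. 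Hence $\e\bigl((N\wedge G)/\operatorname{im}\alpha\bigr)$ controls one factor, and $\e(\operatorname{im}\alpha) \mid \e(N\wedge N)$ controls the other, so that $\e(N\wedge G)$ divides the product of these two exponents.

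Thus the crux reduces to showing $\e\bigl((N\wedge G)/\operatorname{im}\alpha\bigr) \mid \e(N)$. In this quotient a generator $n \wedge g$ raised to the power $m := \e(N)$ should be trivial; to see this I would expand $(n\wedge g)^m$ using relation \eqref{E:1.1.1} repeatedly, or more cleanly use that modulo $\operatorname{im}\alpha$ the symbol $n \wedge g$ behaves additively in the first variable — i.e. $n_1 n_2 \wedge g \equiv (n_1 \wedge g)(n_2 \wedge g)$ — which follows from \eqref{E:1.1.1} together with the fact that the "twist" ${}^{n_1}n_2 \wedge {}^{n_1}g$ differs from $n_2 \wedge g$ by elements of $\operatorname{im}\alpha$ (since $({}^{n_1}n_2)\,n_2^{-1} = [n_1,n_2] \in N$). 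Given additivity in the first slot, $(n\wedge g)^m \equiv n^m \wedge g \equiv 1 \wedge g = 1$ in the quotient, using $n^m = 1$. That yields $\e\bigl((N\wedge G)/\operatorname{im}\alpha\bigr) \mid \e(N)$, completing the argument.

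The main obstacle I anticipate is making the "additivity modulo $\operatorname{im}\alpha$" step fully rigorous: one must verify that all the correction terms produced when commuting the twisted symbols past one another genuinely lie in $\operatorname{im}\alpha = \langle n \wedge n'\rangle$ and not merely in some larger subgroup, and that $\operatorname{im}\alpha$ is normal so that the quotient group is well-defined. This is a bookkeeping exercise with the tensor relations \eqref{E:1.1.1}–\eqref{E:1.1.2}, but it is the place where a sign or an action could go wrong. Once the quotient is identified as an abelian group on which $n \wedge g \mapsto$ (class of $n$) is essentially linear in $n$, the exponent bound is immediate, and the short exact sequence $1 \to \operatorname{im}\alpha \to N\wedge G \to (N\wedge G)/\operatorname{im}\alpha \to 1$ gives $\e(N\wedge G) \mid \e(\operatorname{im}\alpha)\,\e\bigl((N\wedge G)/\operatorname{im}\alpha\bigr) \mid \e(N\wedge N)\,\e(N)$, as claimed.
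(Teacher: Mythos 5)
Your proposal is correct and takes essentially the same route as the paper: the same exact sequence $N\wedge N \to N\wedge G \to (N\wedge G)/\operatorname{im}(N\wedge N)\to 1$, the same observation that $[n_1\wedge g_1,n_2\wedge g_2]=[n_1,g_1]\wedge[n_2,g_2]$ dies in the quotient by normality of $N$, and the same identification $(n\wedge g)^t\equiv n^t\wedge g$ modulo $\operatorname{im}(N\wedge N)$, after which $t=\e(N)$ finishes. The bookkeeping you flag as the main obstacle is exactly what the paper carries out, and it is painless there because the claim is only needed for powers of a single $n$, so the first-slot twist ${}^{n}n^{t-1}=n^{t-1}$ is trivial and the only correction term is $n^{t-1}\wedge[n,g]$, which lies in $\operatorname{im}(N\wedge N)$ since $[n,g]\in N$.
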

\begin{proof}
Consider the exact sequence, 
$$N\wedge N \rightarrow N \wedge G \rightarrow N\wedge G/im(N\wedge N)\rightarrow 1,$$
which yields $\e(N\wedge G) \mid \e(N\wedge N) \e(N\wedge G/im(N\wedge N))$.  Now we claim that, for any integer $t$, the image of $(n^t \wedge g)$ in $(N\wedge G/im(N\wedge N)$ is same as that of $(n \wedge g)^t$. Clearly, the claim holds for $t =1$, and we proceed to prove for a general $t$ via induction. We have,
\begin{align*}
n^t \wedge g &= ^{n}(n^{t-1} \wedge g)(n \wedge g)\\
&= (n^{t-1} \wedge [n,g]g)(n \wedge g)\\
&= (n^{t-1} \wedge [n,g])^{[n,g]}(n^{t-1}\wedge g)(n \wedge g)\\
&=  (n^{t-1} \wedge [n,g])(n\wedge g)(n^{t-1}\wedge g).
\end{align*}
Note that $[n,g] \in N$ and hence the image of $(n^{t-1} \wedge [n,g])$ is trivial in $N\wedge G/im(N\wedge N)$. Therefore, the image of $n^t \wedge g$ in $N\wedge G/im(N\wedge N)$ coincides with that of $(n\wedge g)(n^{t-1}\wedge g)$. Now applying induction hypothesis for the image of $(n^{t-1}\wedge g)$, yields the claim. Furthermore, for $n_1,n_2 \in N$ and $g_1,g_2 \in G$, we have 
\begin{align}
[n_1\wedge g_1, n_2 \wedge g_2] = [n_1,g_1]\wedge [n_2,g_2], 
\end{align}whose image in $N\wedge G/im(N\wedge N)$ is trivial. Thus the image of any two simple exteriors of $N\wedge G$ in $N\wedge G/im(N\wedge N)$ commute. Now, taking $t = exp(N)$ yields the proof.
\end{proof}

Before we state the next result, let us define what we mean by $weight$ of a non-identity element in a nilpotent group $G$. 

\begin{definition}
An element $g \in G\setminus\{1\}$ is said to have weight $n$ if $g \in \gamma_n(G)$ and $g \notin \gamma_{n+1}(G)$. It is denoted by $w(g)$.
\end{definition}

In \cite{R}, Rocco studies a group $\gamma(G)$ related to the nonabelian tensor square.
Let $G$ and $G^{\phi}$ be isomorphic groups through an isomorphism $\phi: g \mapsto g^{\phi}$, for all $g \in G$. Then $\gamma(G)$ is defined as follows:
$$\gamma(G) :=\  < G,G^{\phi}| [g_1,g_2^{\phi}]^{g_3} = [g_1^{g_3},(g_2^{g_3})^{\phi}] = [g_1,g_2^{\phi}]^{g_3^{\phi}}, \forall g_1,g_2,g_3 \in G>.$$
(Note that here for $g,h \in G$, $g^h = h^{-1}gh$ and $[g,h] = g^{-1}g^h$.)
Furthermore, Rocco gives an isomorphism between the nonabelian tensor square of $G$ and the subgroup $[G, G^{\phi}]$ of $\gamma(G)$.  We use this isomorphism in the following lemma where we show that certain tensors in the exterior square, based on their weight and class of the group, are in-fact trivial. This will be frequently used in later calculations.

\begin{lemma}\label{L:1.3}
Let $G$ be a group of class $m$. Then the following hold for $a,b,c,d \in G$:
\begin{itemize}
\item[(i)]$ (g \otimes h) = 1$, when $w(g) + w(h) \geq m+2$.
\item[(ii)]$ [g_1\otimes h_1, g_2 \otimes h_2 ] = 1$,  when $w(g_1) +w(g_2)+w(h_1)+w(h_2) \geq m+2$. 
\end{itemize}
\end{lemma}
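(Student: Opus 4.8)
The plan is to avoid manipulating the tensor relations directly in $G\otimes G$ and instead transport the whole question into Rocco's group $\gamma(G)$, using the isomorphism $G\otimes G\cong[G,G^{\phi}]$ under which the simple tensor $g\otimes h$ corresponds to the commutator $[g,h^{\phi}]$. Because this is a group isomorphism, it carries products to products and commutators to commutators; in particular $[g_1\otimes h_1,g_2\otimes h_2]$ is sent to $[[g_1,h_1^{\phi}],[g_2,h_2^{\phi}]]$ in $\gamma(G)$. Thus both assertions become statements about the vanishing of certain (iterated) commutators inside $\gamma(G)$, and the entire argument can then be phrased purely in terms of the lower central series of $\gamma(G)$, with no further reference to the generators-and-relations description of the tensor square. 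This is exactly what is needed to get vanishing in $G\otimes G$ itself, not merely modulo $\nabla(G)$.

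The key structural input I would invoke is Rocco's estimate that if $G$ is nilpotent of class $m$ then $\gamma(G)$ is nilpotent of class at most $m+1$, i.e. $\gamma_{m+2}(\gamma(G))=1$. I would combine this with two elementary facts about lower central series. First, since $G$ and $G^{\phi}$ are subgroups of $\gamma(G)$, subgroup monotonicity of the lower central series gives $\gamma_n(G)\le\gamma_n(\gamma(G))$ and, as $\phi$ is an isomorphism, $(\gamma_n(G))^{\phi}=\gamma_n(G^{\phi})\le\gamma_n(\gamma(G))$; hence an element of weight $n$ in $G$, and also its $\phi$-image, lie in $\gamma_n(\gamma(G))$. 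Second, $[\gamma_i(\gamma(G)),\gamma_j(\gamma(G))]\le\gamma_{i+j}(\gamma(G))$.

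Carrying out part (i): assuming $g,h\neq 1$ (the case $g=1$ or $h=1$ being trivial), set $i=w(g)$ and $j=w(h)$, so that $g\in\gamma_i(\gamma(G))$ and $h^{\phi}\in\gamma_j(\gamma(G))$. Then $g\otimes h$ corresponds to $[g,h^{\phi}]\in[\gamma_i(\gamma(G)),\gamma_j(\gamma(G))]\le\gamma_{i+j}(\gamma(G))$, and when $i+j\ge m+2$ this lands in $\gamma_{m+2}(\gamma(G))=1$, forcing $g\otimes h=1$. For part (ii), put $i_1=w(g_1)+w(h_1)$ and $i_2=w(g_2)+w(h_2)$; by the same reasoning $[g_1,h_1^{\phi}]\in\gamma_{i_1}(\gamma(G))$ and $[g_2,h_2^{\phi}]\in\gamma_{i_2}(\gamma(G))$, so their commutator lies in $\gamma_{i_1+i_2}(\gamma(G))=\gamma_{w(g_1)+w(g_2)+w(h_1)+w(h_2)}(\gamma(G))$. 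The weight hypothesis makes this index at least $m+2$, so the commutator vanishes and $[g_1\otimes h_1,g_2\otimes h_2]=1$.

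The substantive step, and the only nontrivial one, is the nilpotency bound $\gamma_{m+2}(\gamma(G))=1$; for this I would simply cite Rocco's class estimate for $\gamma(G)$, after which everything reduces to formal lower-central-series bookkeeping. The one point needing a word of care is the degenerate case $m=1$ (abelian $G$), for which some formulations of Rocco's bound assume class at least $2$: here part (i) is vacuous, while part (ii) asserts only that $G\otimes G$ is abelian, which holds since for abelian $G$ the nonabelian tensor square coincides with the ordinary $\mathbb{Z}$-tensor product $G\otimes_{\mathbb{Z}}G$. I would dispose of this case separately and run the $\gamma(G)$ argument above for $m\ge 2$.
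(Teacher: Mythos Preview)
Your proof is correct and follows essentially the same approach as the paper: both transport the question into Rocco's group $\gamma(G)$ via the isomorphism $G\otimes G\cong[G,G^{\phi}]$, invoke the nilpotency bound $\gamma_{m+2}(\gamma(G))=1$, and then use standard lower-central-series containment to conclude. Your write-up is somewhat more explicit about the bookkeeping (subgroup monotonicity, the $m=1$ edge case) than the paper's, but the argument is the same.
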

\begin{proof}\begin{itemize}
\item[$(i)$] Consider the isomorphism $\psi : G \otimes G \rightarrow [G, G^{\phi}]$  defined by $\psi(g \otimes h) = [g, h^{\phi}] $, where $G^{\phi}$ is an isomorphic copy of G.
Note that, $w(g) + w(h) \geq m+2$  gives $w(g) + w(h^{\phi}) \geq m+2$.
Hence $[g, h^{\phi}] \in \gamma_{m+2}(\gamma(G)) = 1$, which yields $g \otimes h = 1$.

\item[$(ii)$] Again consider the map $\psi$ as in ($i$). 
Since $ w(g_1) +w(h_1) +w(g_2)+w(h_2) \geq m+2$, $w(g_1) +w(h_1^{\phi}) +w(g_2)+w(h_2^{\phi}) \geq m+2$. Hence
\begin{align*}
\psi([g_1 \otimes h_1, g_2\otimes h_2]) = [[g_1,h_1^{\phi}],[g_2,h_2^{\phi}]] \in \gamma_{m+2}(\gamma(G)) = 1.
\end{align*}
Therefore $[g_1 \otimes h_1, g_2 \otimes h_2] =1$, giving us the required result.

\end{itemize}
\end{proof}

The following lemma can be found as Lemma 4.1 of \cite{APT}. 

\begin{lemma}\label{L:4.1}
Let $G$ be a group of nilpotency class $5$, $a, b\in G$. Then for all $n\in \mathbb{N}$,
\begin{align*}
(ab)^n =& [[b, a], a, b, a]^{6{{n}\choose{3}}+18{{n}\choose{4}}+12{{n}\choose{5}}} [[b, a], b, b, a]^{{{n}\choose{3}}+7{{n}\choose{4}}+6{{n}\choose{5}}} \\& [a, a, a, b, a]^{3{{n}\choose{4}}+4{{n}\choose{5}}} [a, a, b, b, a]^{{{n}\choose{3}}+6{{n}\choose{4}}+6{{n}\choose{5}}} [a, b, b, b, a]^{3{{n}\choose{4}}+4{{n}\choose{5}}} \\& [b, b, b, b, a]^{{{n}\choose{5}}}  [a, a, b, a]^{2{{n}\choose{3}}+3{{n}\choose{4}}} [a, b, b, a]^{2{{n}\choose{3}}+3{{n}\choose{4}}} \\ & [b, b, b, a]^{{{n}\choose{4}}} [a, b, a]^{{{n}\choose{2}}+2{{n}\choose{3}}} [b, b, a]^{{{n}\choose{3}}} [b, a]^{{{n}\choose{2}}} a^n b^n.
\end{align*}
\end{lemma}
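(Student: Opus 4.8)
The plan is to prove the identity inside the free nilpotent group $F$ of class $5$ on two generators $x,y$: since both sides are words in $a,b$ and the assignment $x\mapsto a$, $y\mapsto b$ extends to a homomorphism $F\to G$, the general class-$5$ case follows from the free one, and working in $F$ makes "comparing coefficients'' legitimate. In $F$ one has $\gamma_6(F)=1$, so every iterated commutator of weight $\ge 6$ is trivial; moreover $\gamma_2(F)$ is generated by the twelve commutators occurring in the statement (one of weight $2$, two of weight $3$, three of weight $4$, six of weight $5$), and these project to $\mathbb{Z}$-bases of the successive quotients $\gamma_i(F)/\gamma_{i+1}(F)$. Checking this last point — identifying the right-normed commutators in the statement, up to sign and higher-weight corrections, with Hall basic commutators — is a preliminary step; once it is in hand, every element of $\gamma_2(F)$ has a unique normal form as an ordered product of powers of these twelve commutators.

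I would then argue by induction on $n$. The base case $n=1$ is immediate, since $\binom1k=0$ for $k\ge 2$ leaves only $a^1b^1$. For the inductive step write $(ab)^{n+1}=(ab)^n\cdot ab$, substitute the formula for $(ab)^n$, and transport the trailing $ab$ leftward through the commutator part to merge it with $a^nb^n$. Two auxiliary expansions do the work. First, a collection formula for $[b^m,a]$ in $F$, obtained by an easy induction on $m$ from $[gg_1,h]={}^g[g_1,h][g,h]$ (Lemma \ref{L:1.1}(i)) together with $\gamma_6(F)=1$, which writes $[b^m,a]$ as an ordered product of powers — with binomial-coefficient exponents — of $[b,a],[b,b,a],[b,b,b,a],[b,b,b,b,a]$. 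Second, the observation that conjugating any basic commutator $c$ on the list by $a$ or by $b$ changes it only by a product of commutators of strictly larger weight (since ${}^ac=[a,c]\,c$ with $w([a,c])\ge w(c)+1$), each of which is then rewritten in normal form. Using these one computes $a^nb^n\cdot ab=a^n[b^n,a]\,a\,b^{n+1}=a^{n+1}\bigl({}^{a^{-1}}[b^n,a]\bigr)b^{n+1}$, expands $[b^n,a]$, conjugates, and pushes the resulting commutators past the commutator part of $(ab)^n$ (again picking up only higher-weight terms), finally reassembling everything into normal form.

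The verification then reduces to checking, for each of the twelve commutators, that its exponent in the rearranged product for $(ab)^{n+1}$ matches the prescribed one. For $[b,a]$ this is Pascal's rule $\binom n1+\binom n2=\binom{n+1}{2}$ with $\binom n1=n$; the weight-$3$, $4$ and $5$ coefficients unwind similarly, each collapsing to identities of the form $\binom nk+\binom n{k-1}=\binom{n+1}{k}$ once all the commutator-of-commutator contributions (from the conjugation formulas and from the $[b^n,a]$-expansion) are collected. The main obstacle is the volume and accuracy of this bookkeeping: one must track every weight-$\ge 3$ commutator generated each time a generator is moved past a commutator, fix the ordering of the normal form once and adhere to it throughout, and ensure nothing is dropped or double-counted. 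For this reason it is prudent to cross-check the resulting polynomial exponents against a direct symbolic computation in the free two-generator class-$5$ nilpotent group (e.g.\ via a computer algebra system) for several small values of $n$ before trusting the hand calculation; this is essentially the route by which Lemma \ref{L:4.1} is established in \cite{APT}.
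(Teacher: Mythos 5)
The first thing to note is that the paper does not prove this statement at all: it is imported verbatim as Lemma~4.1 of \cite{APT}, so there is no in-paper argument to compare yours against. Your plan --- pass to the free nilpotent group of class $5$ on two generators, observe that the twelve commutators in the statement represent a basis of $\gamma_2(F)$ modulo $\gamma_6(F)=1$ (the count $1+2+3+6=12$ does match Witt's formula), and induct on $n$ by collecting the trailing $ab$ --- is the standard collection-process route and is conceptually sound; the algebra you display, $a^nb^n\cdot ab=a^n[b^n,a]\,a\,b^{n+1}=a^{n+1}\bigl({}^{a^{-1}}[b^n,a]\bigr)b^{n+1}$, is correct under the paper's conventions, and the base case and the $n=2$ case do check out against the stated exponents.

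The gap is that the proposal stops exactly where the content of the lemma begins. The lemma is not the assertion that \emph{some} normal form with binomial exponents exists --- that much is automatic from the collection process --- but that the twelve exponent polynomials are precisely the ones displayed. Your write-up defers the entire verification (expanding ${}^{a^{-n-1}}[b^n,a]$ in normal form, commuting it past the weight-ordered block, and re-collecting the weight-$5$ corrections arising from reordering non-commuting basis elements such as $[[b,a],[a,b,a]]$) to ``bookkeeping'' and ultimately to a computer cross-check for small $n$, which verifies the formula only for those $n$ and does not close the induction. To make this a proof you would need to either carry out the twelve coefficient recursions explicitly and check each reduces to Pascal's rule, or replace the hand collection by an exact reference to a worked-out Hall--Petrescu-type expansion; as it stands the argument is a correct strategy with its decisive step unexecuted, whereas the paper's stance is simply to cite \cite{APT}, where that computation is actually performed.
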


We can derive the following identities for a group of nilpotency class $8$ using induction on $n$.
\begin{lemma}\label{L:4.2}
Let $G$ be a group of nilpotency class $8$. Then for $g,h \in G$, we have
\begin{itemize}
\item[($i$)]
\begin{align*}
{^{g^n}}[g,h] =& [[g,g,h],[g,g,g,g,h]]^{{n}\choose{4}}[[g,g,h],[g,g,g,h]]^{{n}\choose{3}}\\   &[g,g,g,g,g,g,g,h]^{{n}\choose{6}}[g,g,g,g,g,g,h]^{{n}\choose{5}}[g,g,g,g,g,h]^{{n}\choose{4}}\\ 
& [g,g,g,g,h]^{{n}\choose{3}}[g,g,g,h]^{{n}\choose{2}}[g,g,h]^n[g,h].
\end{align*}
\item[($ii$)]
\begin{align*}
[g^n,h] =& [[g,h],[g,h],[g,g,g,h]]^{{n}\choose{5}}[[g,g,h],[g,h],[g,g,h]]^{2{{n}\choose{3}} + 9{{n}\choose{4}} + 7{{n}\choose{5}}}\\& [[g,h],[g,h],[g,g,h]]^{{n}\choose{4}}[[g,h],[g,g,g,g,g,h]]^{{n}\choose{6}}\\& [[g,g,h],[g,g,g,g,h]]^{5{{n}\choose{5}}+ 5{{n}\choose{6}}} [[g,h],[g,g,g,g,h]]^{{n}\choose{5}}\\&[[g,g,h],[g,g,g,h]]^{4{{n}\choose{4}}+ 4{{n}\choose{5}}} [[g,h],[g,g,g,h]]^{{n}\choose{4}}[[g,h],[g,g,h]]^{{n}\choose{3}}\\&[g,g,g,g,g,g,g,h]^{{n}\choose{7}}[g,g,g,g,g,g,h]^{{n}\choose{6}}[g,g,g,g,g,h]^{{n}\choose{5}}\\&[g,g,g,g,h]^{{n}\choose{4}} [g,g,g,h]^{{n}\choose{3}} [g,g,h]^{{n}\choose{2}}[g,h]^n.
\end{align*}
\end{itemize}
\end{lemma}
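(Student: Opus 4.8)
The plan is to prove both identities by a single induction on $n$. The base case $n=1$ is immediate: $\binom{1}{k}=0$ for every $k\ge 2$, so in each formula all factors except the last disappear and both sides collapse to $[g,h]$. Since each identity asserts the equality of two explicit words in the free group on $\{g,h\}$, it is harmless to work inside the free nilpotent group $F$ of class $8$ on two generators $g,h$; there $\gamma_{9}(F)=1$, so every right-normed commutator of weight $\ge 9$ vanishes, and one fixes once and for all a basic-commutator basis of $\gamma_{2}(F)/\gamma_{9}(F)$ that contains all the commutators occurring on the right-hand sides of $(i)$ and $(ii)$.

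For $(i)$ the inductive step is driven by the recursion ${}^{g^{n+1}}[g,h]={}^{g}\bigl({}^{g^{n}}[g,h]\bigr)$. Writing $P_{n}$ for the product on the right of $(i)$, one applies ${}^{g}(-)$ to $P_{n}$ and distributes it over the factors. For a basic commutator $u$ of weight $w$, Lemma~\ref{L:1.1}(iii) gives ${}^{g}u=[g,u]\,u$, where $[g,u]$ is again a (right-normed) commutator of weight $w+1$, hence trivial as soon as $w\ge 8$; thus only the factors of weight $\le 7$ create new terms, and those have weight $\le 8$. One then recollects the resulting word into the order displayed in the statement, repeatedly using $uv=[u,v]\,vu$: the crucial simplification is that whenever two factors have weights summing to $\ge 9$ their commutator is $1$, so only finitely many correction commutators of weight $\le 8$ can ever appear, and high-weight factors may be moved past one another freely. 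Collecting weight by weight, the exponent attached to each basic commutator becomes a $\mathbb{Z}$-linear combination of binomial coefficients $\binom{n}{k}$; applying Pascal's rule $\binom{n}{k}+\binom{n}{k-1}=\binom{n+1}{k}$ to the "diagonal" contributions and the usual Vandermonde-type identities to the cross contributions converts $P_{n}$ into $P_{n+1}$, which is exactly the claim for $n+1$.

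For $(ii)$ one uses Lemma~\ref{L:1.1}(i) in the form $[g^{n+1},h]=[g^{n}g,h]={}^{g^{n}}[g,h]\,[g^{n},h]$, substitutes the formula just established in $(i)$ for the first factor and the induction hypothesis for the second, and collects the product into standard form exactly as above (again discarding every commutator of weight $\ge 9$ and combining exponents with Pascal's and Vandermonde's identities). The only genuine difficulty in either part is bookkeeping: there are many basic commutators of weight $\le 8$ on two generators, each rearrangement spawns correction commutators, and one must verify that after all cancellations the exponents are precisely the stated ones — in particular the composite coefficients such as $2\binom{n}{3}+9\binom{n}{4}+7\binom{n}{5}$ produced by the cross terms, and the coefficients like $5\binom{n}{5}+5\binom{n}{6}$ in $(ii)$. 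Collecting strictly by decreasing weight, so that each stage only feeds the lower ones, keeps the computation finite and mechanical; tracking the signs is the one place where real care is required, and no step is conceptually hard.
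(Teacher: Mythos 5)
Your proposal is correct and follows essentially the same route as the paper: induction on $n$, using ${}^{g^{n}}[g,h]={}^{g}\bigl({}^{g^{n-1}}[g,h]\bigr)$ for $(i)$ and the expansion of $[g^{n-1}g,h]$ via Lemma~\ref{L:1.1}$(i)$ together with part $(i)$ and the induction hypothesis for $(ii)$, then discarding commutators of weight $\geq 9$ and recollecting. The only cosmetic difference is that you organize the recollection via basic-commutator collection and Pascal's rule in the free nilpotent group of class $8$, whereas the paper expands the powers of products that arise using its Lemma~\ref{L:4.1}; the substance is the same.
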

\begin{proof} 
\begin{itemize}
\item[$(i)$]  To derive $(i)$, we use induction on $n$. We have, ${^{g^n}}[g,h] =\\ ^{^g}\{^{g^{n-1}}[g,h]\}$, to which we apply the induction hypothesis. The action by $g$ can be distributed to each term. The formula $^gh = [g,h]h$ is then applied to obtain the following,
\begin{align*}
{^{g^n}}[g,h] =& [[g,g,h],[g,g,g,g,h]]^{{n-1}\choose{4}}[[g,g,g,h][g,g,h],[g,g,g,g,h][g,g,g,h]]^{{n-1}\choose{3}}\\   &[g,g,g,g,g,g,g,h]^{{n-1}\choose{6}}\{[g,g,g,g,g,g,g,h][g,g,g,g,g,g,h]\}^{{n-1}\choose{5}}\\ &\{[g,g,g,g,g,g,h][g,g,g,g,g,h]\}^{{n-1}\choose{4}}
 \{[g,g,g,g,g,h][g,g,g,g,h]\}^{{n-1}\choose{3}}\\&\{[g,g,g,g,h][g,g,g,h]\}^{{n-1}\choose{2}}\{[g,g,g,h][g,g,h]\}^{n-1}[g,g,h][g,h].
\end{align*}
Observe that $[[g,g,g,h][g,g,h],[g,g,g,g,h][g,g,g,h]]$ =\\ $[[g,g,h],[g,g,g,g,h]][[g,g,h],[g,g,g,h]]$.
Further, those terms which appear as a power of product of terms are expanded using Lemma \ref{L:4.1}. Now collecting similar terms yields the given identity.
\item[$(ii)$] We obtain $(ii)$ by applying \eqref{eq:1.1} to $[g^{n-1}g,h]$ and then using $(i)$ for $^{g^{n-1}}[g,h]$ and the induction hypothesis for $[g^{n-1},h]$. Similar terms, starting from $[g,h]$, then $[g,g,h]$ and the other terms are collected using \eqref{eq:1.3} to obtain the above identity.
\end{itemize}
\end{proof}

\section{$3$-groups of nilpotency class 7}
We have the following theorem in \cite{APT}, which gives a bound for the exponent of certain commutators in a $3$-group of class $6$.
\begin{theorem}\label{th:4.3}
Let $G$ be a $3$-group of class $6$, $a, b\in G$ and $c\in \gamma_2(G)$. 
\begin{itemize}
\item[$(i)$] If $a^{3^n}\in Z(G)$, then $[b, a^3]^{3^{n-1}} = 1$.
\item[$(ii)$] If $a^{3^n}\in Z(G)$ and $c^{3^{n-1}} = 1$, then $([b, a^3]c)^{3^{n-1}} = 1$.
\end{itemize}
\end{theorem}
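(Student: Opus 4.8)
The natural tool is the commutator collection process (Hall--Petrescu), in the specialised form already used in Lemmas \ref{L:4.1} and \ref{L:4.2}: a group of class $6$ is in particular of class $8$, so the identities of Lemma \ref{L:4.2} apply, and by Lemma \ref{L:1.3} every commutator of weight $\ge 7$ vanishes, which collapses those identities to a handful of terms. I would prove $(i)$ by induction on the nilpotency class $m$ of $G$; for $m\le 2$ it is immediate, since then $[b,a^3]=[b,a]^3$ and $a^{3^n}\in Z(G)$ forces $[b,a]^{3^n}=1$, so $[b,a^3]^{3^{n-1}}=[b,a]^{3^n}=1$. For the inductive step one passes to $\bar G=G/\gamma_m(G)$, which has class $m-1$ and in which $\bar a^{\,3^n}$ is still central; the inductive hypothesis then gives $[b,a^3]^{3^{n-1}}\in\gamma_m(G)\le Z(G)$, so it remains to compute this element exactly modulo $\gamma_{m+1}(G)$ and to show it is trivial.

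For that computation, Lemma \ref{L:4.2}$(ii)$ with $g=a$, $h=b$ and exponent $3^n$, after discarding commutators of weight $\ge 7$, turns the identity $[a^{3^n},b]=1$ into
\[
1=[[a,b],[a,a,a,b]]^{\binom{3^n}{4}}[[a,b],[a,a,b]]^{\binom{3^n}{3}}[a,a,a,a,a,b]^{\binom{3^n}{5}}[a,a,a,a,b]^{\binom{3^n}{4}}[a,a,a,b]^{\binom{3^n}{3}}[a,a,b]^{\binom{3^n}{2}}[a,b]^{3^n},
\]
while the same lemma with exponent $3$ expresses $[a^3,b]$ as a short product of $[a,b]$, $[a,a,b]$, $[a,a,a,b]$ and $[[a,b],[a,a,b]]$; raising this to $3^{n-1}$ by repeated collection inside $\gamma_2(G)$, which has class $\le 3$ (via Lemma \ref{L:4.1}), yields the leading factors $[a,b]^{3^n}[a,a,b]^{3^n}[a,a,a,b]^{3^{n-1}}[[a,b],[a,a,b]]^{3^{n-1}}$ together with correction terms of weight $5$ and $6$ whose exponents are binomial coefficients in $3^{n-1}$. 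The arithmetic input is $v_p\!\binom{p^n}{k}=n-v_p(k)$, so for $p=3$ these binomials are divisible by $3^n$ when $k\in\{1,2,4,5\}$ and by $3^{n-1}$ when $k=3$; combined with order bounds on the weight $\ge 3$ commutators obtained by inserting the central element $a^{3^n}$ in an inner position (for instance $[[a,b],a^{3^n}]=1$ collapses, again by Lemma \ref{L:4.2}, to force a $3^n$-th power of $[a,a,b]$ into $\gamma_4(G)$, and iterating down the lower central series bounds $\e([a,a,b])$, $\e([a,a,a,b])$, \ldots\ by fixed powers of $3$), substituting the displayed relation for $[a,b]^{3^n}$ into the expansion of $[a^3,b]^{3^{n-1}}$ makes every surviving exponent a multiple of the order of the commutator it multiplies. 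Hence $[b,a^3]^{3^{n-1}}=1$.

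For $(ii)$ one works inside $\gamma_2(G)$, which has class $\le 3$ since $\gamma_4(\gamma_2(G))\le\gamma_8(G)=1$; writing $v:=[b,a^3]$, the class $\le 3$ truncation of Lemma \ref{L:4.1} with $a\mapsto v$, $b\mapsto c$ gives
\[
([b,a^3]c)^{3^{n-1}}=[v,[c,v]]^{\,\binom{3^{n-1}}{2}+2\binom{3^{n-1}}{3}}\,[c,[c,v]]^{\,\binom{3^{n-1}}{3}}\,[c,v]^{\,\binom{3^{n-1}}{2}}\,v^{3^{n-1}}c^{3^{n-1}},
\]
in which $v^{3^{n-1}}=1$ by part $(i)$ and $c^{3^{n-1}}=1$ by hypothesis, while $[c,v]\in\gamma_4(G)$ and $[c,[c,v]],[v,[c,v]]\in\gamma_6(G)$. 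Inserting $c^{3^{n-1}}$ into inner positions and using the collapsed power formulas shows that $[c,v]$, $[c,[c,[b,a]]]$ and $[[b,a],[c,[b,a]]]$ each have order dividing $3^{n-1}$ --- for instance $[c,v]^{3^{n-1}}=[[c,v],c]^{-\binom{3^{n-1}}{2}}$ and $[[c,v],c]^{3^{n-1}}=[[c,v],c^{3^{n-1}}]=1$, with $v_3\!\binom{3^{n-1}}{2}=n-1$, give $[c,v]^{3^{n-1}}=1$ --- and, crucially, that because $v=[b,a^3]\equiv[b,a]^3\pmod{\gamma_3(G)}$ one has, modulo terms of higher weight, $[c,v]\equiv[c,[b,a]]^3$, $[c,[c,v]]=[c,[c,[b,a]]]^3$ and $[v,[c,v]]=[[b,a],[c,[b,a]]]^9$. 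These cube and ninth-power structures supply exactly the powers of $3$ that $\binom{3^{n-1}}{3}$ lacks, so that $[c,v]^{\binom{3^{n-1}}{2}}$, $[c,[c,v]]^{\binom{3^{n-1}}{3}}$ and $[v,[c,v]]^{\binom{3^{n-1}}{2}+2\binom{3^{n-1}}{3}}$ all vanish (the cases $n\le 2$ being checked by hand). Therefore $([b,a^3]c)^{3^{n-1}}=1$.

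The step I expect to be the main obstacle is the systematic control of the correction terms thrown up by collection in class $6$ --- the non-left-normed commutators $[[a,b],[a,a,b]]$ and $[[a,b],[a,a,a,b]]$ in $(i)$, and the weight $6$ commutators $[c,[c,v]]$ and $[v,[c,v]]$ in $(ii)$ --- together with the verification that every exponent produced by the collection is divisible by the correct power of $3$. This is delicate precisely because $p=3$ is small, so $\binom{3^n}{3}$ and $\binom{3^{n-1}}{3}$ each drop a power of $3$: in $(i)$ that power must be recovered from the order of the relevant commutator, which forces a layer-by-layer descent of the lower central series using the centrality of $a^{3^n}$ in inner as well as outer positions, and in $(ii)$ it comes from the observation that $[b,a^3]$ is, modulo weight $\ge 3$ terms, a cube --- which is exactly why the statement concerns $a^3$ rather than $a$.
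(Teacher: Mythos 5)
First, a point of orientation: this paper does not prove Theorem \ref{th:4.3} at all --- it is imported verbatim from \cite{APT} (``We have the following theorem in \cite{APT}\dots''), so there is no in-paper proof to compare against. Your sketch does follow the methodology that \cite{APT} and the neighbouring results here (Lemmas \ref{L:3.2}, \ref{L:5.1}, \ref{L:5.2}) actually use: expand $[a^{3^n},b]=1$ and $[a^3,b]$ via the Hall--Petrescu identities of Lemmas \ref{L:4.1} and \ref{L:4.2}, discard weight $\geq 7$ terms by Lemma \ref{L:1.3}, and track $3$-adic valuations of binomial coefficients. The approach is the right one, and your treatment of part $(ii)$ (reduction to the class $\leq 3$ group $\gamma_2(G)$, plus the observation that $[b,a^3]$ is a cube modulo $\gamma_3(G)$) is essentially sound --- but $(ii)$ consumes $(i)$, and $(i)$ is where the gap is.

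The gap sits exactly where you predicted, and it is not overcome. The ``layer-by-layer descent'' you invoke to bound the orders of the intermediate commutators does not close as stated. Inserting the central element in an inner position gives, in class $6$, $1=[a^{3^n},[a,a,b]]=[a,a,a,a,a,b]^{\binom{3^n}{3}}[a,a,a,a,b]^{\binom{3^n}{2}}[a,a,a,b]^{3^n}$; the weight-$5$ factor dies because such elements have order dividing $3^n$ and $v_3\binom{3^n}{2}=n$, but $v_3\binom{3^n}{3}=n-1$, while the same method only bounds the weight-$6$ term by $3^n$. So you do not get $\e([a,a,a,b])\mid 3^n$ as claimed; you only get $[a,a,a,b]^{3^n}$ equal to a specific weight-$6$ element of order dividing $3$. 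Consequently the final step cannot be done term-by-term against fixed order bounds: one must combine the exponent $3^{n-1}$ arising from $[a^3,b]^{3^{n-1}}$ with the exponent $-\binom{3^n}{3}=-3^{n-1}u$, $u\equiv 1\pmod{3^n}$, arising from substituting the relation for $[a,b]^{3^n}$, and check that the total exponent $3^{n-1}(1-u)$, of valuation $\geq 2n-1$, annihilates the element given its weaker order bound (with $n=1,2$ handled separately). Worse, for the non-left-normed term $[[a,b],[a,a,b]]$ your sketch derives no order bound at all: neither entry is a power of $a$, so ``inserting $a^{3^n}$'' does not apply, and controlling it requires the auxiliary commutator-order lemmas (the analogues of Lemmas $2.8$ and $2.10$ of \cite{APT} on which the rest of this paper leans). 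Until those two points are supplied, the argument for $(i)$ --- and hence for $(ii)$ --- is a plan rather than a proof.
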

\begin{Remark}\label{R:1}From the above theorem we also obtain the following :\\
 Let $a,b,b_1 \in G$, a $3$-group such that $a^{3^n} \in Z(G)$. If the normal subgroup generated by $a,b_1$ is of class $6$, then we have \\$$[[a^3,b],b_1]^{3^{n-1}} = [a^3\ ^b{a^{-3}},b_1]^{3^{n-1}} = \{^{a^3}[\ ^b{a^{-3}},b_1][a^3,b_1]\}^{3^{n-1}}= 1.$$
\end{Remark}

Using the above theorem yields a bound for certain commutators in a $3$-group of class $8$.

\begin{lemma}\label{L:3.2}
Let $G$ be a $3$-group of class $8$. For $g,h \in G$ and $c \in \gamma_2(G)\cap G^3$
\begin{itemize}
\item[$(i)$] If $g^{3^n}\in Z(G)$, then $[g^3,h]^{3^{n}} = 1$.
\item[$(ii)$] If $g^{3^n}\in Z(G)$ and $c^{3^{n}} = 1$, then $[c, [g^3,h]]^{3^{n}} = 1$.
\item[$(iii)$] If $a^{3^n}\in Z(G)$ and $w(c) \geq 3$, then $[c, a^3]^{3^{n-1}} = 1$.
\item[$(iv)$] If $g^{3^n}\in Z(G)$ and $c^{3^{n}} = 1$, then $([g^3,h]c)^{3^{n}} = 1$.
\end{itemize}
\end{lemma}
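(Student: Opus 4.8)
The plan is to bootstrap the class-$8$ statements from the class-$6$ results of Theorem \ref{th:4.3} and Remark \ref{R:1} by passing to $\bar G=G/\gamma_7(G)$, which has class at most $6$, and then controlling the error terms, all of which lie in $\gamma_7(G)$, by means of the collection identity of Lemma \ref{L:4.2}(ii). Two structural facts about a class-$8$ group will be used repeatedly: $\gamma_7(G)$ is abelian, since $[\gamma_7(G),\gamma_7(G)]\subseteq\gamma_{14}(G)=1$, and $\gamma_8(G)\subseteq Z(G)$, since $[\gamma_8(G),G]=\gamma_9(G)=1$. In addition, by Kummer's theorem the $3$-adic valuation of $\binom{3^{m}}{j}$ is $m$ when $3\nmid j$ and $m-1$ when $3\mid j$, so every exponent appearing in Lemma \ref{L:4.2}(ii) after the power $3^{m}$ has been substituted is divisible by $3^{\,m-1}$.

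The engine is the identity obtained by feeding $1=[y^{3^m},x]$ (valid whenever $y^{3^m}\in Z(G)$) into Lemma \ref{L:4.2}(ii):
\begin{equation*}
[y,x]^{3^m}=\prod_{i}c_i^{\,3^{m-1}u_i},\qquad u_i\in\mathbb Z,
\end{equation*}
where the $c_i$ run over the iterated commutators $[\,\underbrace{y,\dots,y}_{j},x\,]$ with $j\ge2$ and the heavy commutators of Lemma \ref{L:4.2}(ii); each simple $c_i$ has the form $[y,x']$ with $w(x')>w(x)$, and each heavy one has weight at least $2\,w([y,x])$. Since weights are bounded by $8$, this drives a downward induction on $w(x)$: when $w([y,x])\ge8$ every $c_i$ is trivial, so $[y,x]^{3^m}=1$ outright; in general the simple $c_i$ are handled by the inductive hypothesis and the heavy $c_i$ either vanish by the class bound or are absorbed into part (ii).

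The four parts are then proved by a largely simultaneous downward induction on weight, roughly in the order (iii), (i), (ii), (iv). For (iii): $w(c)\ge3$ together with $c\in\gamma_2(G)\cap G^3$ gives $c\in\gamma_3(G)$, and running the engine with $y=a^3$, $x=c$, $m=n-1$ (using $a^{3^n}=(a^3)^{3^{n-1}}\in Z(G)$) reduces $[c,a^3]^{3^{n-1}}=([a^3,c]^{3^{n-1}})^{-1}$ to a product of error terms of strictly larger weight; all but one are dispatched inductively, the exception being the term with coefficient $\binom{3^{n-1}}{3}$, whose valuation is only $n-2$, and for it the hypothesis $c\in G^{3}$ is used: writing $c$ as a product of cubes and expanding, via Lemma \ref{L:1.1} and Lemma \ref{L:4.1}, $[a^3,c]$ as a product of cubes of lower-weight brackets modulo $\gamma_{w(c)+3}(G)$ supplies the missing power of $3$. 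For (i): modulo $\gamma_7(G)$, Theorem \ref{th:4.3}(i) gives $[g^3,h]^{3^{n-1}}\in\gamma_7(G)$, and applying the engine with $y=g^3$, $x=h$, $m=n-1$ and then cubing expresses $[g^3,h]^{3^n}$ as a product of powers $c_i^{3^{n-1}u_i}$ of brackets of weight $\ge3$ in $g^3,h$; a further application of the engine and of the $G^3$-argument to each $c_i$, now of strictly larger weight, closes the induction and yields $[g^3,h]^{3^n}=1$. Part (ii) follows similarly from (i): expanding $[c,[g^3,h]]^{3^n}$ by Lemma \ref{L:4.2}(ii), the surviving factors are $3^n$-th powers of $c$ and of $[g^3,h]$ — trivial by hypothesis and by (i) — together with brackets of weight $\ge6$ that vanish in class $8$, the residual $3^{n-1}$-term again being absorbed using $c\in G^3$. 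Finally (iv): with $u=[g^3,h]$, parts (i) and (ii) give $u^{3^n}=1$ and $[c,u]^{3^n}=1$, and a collection identity for $(uc)^{3^n}$ in the style of Lemma \ref{L:4.1} has leading factor $u^{3^n}c^{3^n}=1$, all remaining factors being $3^n$-th powers of basic commutators in $u$ and $c$ that are either trivial by weight or killed by iterating (ii).

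The step I expect to be the main obstacle is this $3$-adic bookkeeping: Lemma \ref{L:4.2}(ii) always produces an error term whose exponent is divisible only by $3^{\,n-2}$ (or $3^{\,n-1}$ after cubing), exactly one power of $3$ short of what is needed, and recovering that power is precisely the role of the hypothesis ``$c\in G^{3}$'' and of the extra cube furnished by Theorem \ref{th:4.3}(i). Organizing the (simultaneous) downward weight-induction so that these improvements are available at each step without circularity — in particular, checking that the brackets fed back into (iii) and (ii) genuinely have weight $\ge3$ and lie in $G^3$ modulo higher-weight corrections — is the delicate point; the remainder is routine commutator algebra with Lemmata \ref{L:1.1}, \ref{L:4.1} and \ref{L:4.2}.
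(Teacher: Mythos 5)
Your overall strategy---collect $1=[y^{N},x]$ with Lemma \ref{L:4.2}$(ii)$ and feed the class-$6$ results (Theorem \ref{th:4.3}, Remark \ref{R:1}) into the error terms---is the right family of argument, but the specific devices that make it close are missing, and the step you yourself flag as ``the main obstacle'' is a genuine gap, not a technicality. Your engine normalizes to $y^{3^{m}}\in Z(G)$ with $y=g^{3}$ and $m=n-1$, so the binomial coefficients $\binom{3^{n-1}}{j}$ have $3$-adic valuation as low as $n-2$, one power short; you propose to recover the lost factor by ``cubing'' and by using $c\in G^{3}$, but neither is carried out and neither is routine: cubing a product of non-commuting brackets of weight $\geq 3$ does not simply triple the exponents in a class-$8$ group, and writing $c$ as a product of cubes does not visibly turn $[a^{3},c]$ into a cube modulo the right term of the lower central series. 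The paper never meets this obstacle. It expands $1=[(g^{3})^{3^{n}},x]$, raising $g^{3}$ to the \emph{full} power $3^{n}$ (legitimate because $(g^{3})^{3^{n}}=(g^{3^{n}})^{3}\in Z(G)$), so that every coefficient $\binom{3^{n}}{j}$ with $3\leq j\leq 8$ has valuation at least $n-1$ --- exactly what Theorem \ref{th:4.3} and Remark \ref{R:1} deliver on suitable class-$6$ subgroups --- while $\binom{3^{n}}{2}$ has valuation $n$. The last fact is why part $(i)$ is run in two stages: first $x=[g^{3},h]$ yields $[g^{3},g^{3},h]^{3^{n}}=1$, and only then $x=h$ yields $[g^{3},h]^{3^{n}}=1$, the $\binom{3^{n}}{2}$-term being absorbed by the first stage. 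Your version has no analogue of this two-stage run and relies on the unjustified cubing instead.

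There are secondary problems as well. Part $(iii)$ has a one-line proof that you miss: since $w(c)\geq 3$, every basic commutator of weight $7$ in $\{c,a\}$ that involves $c$ has weight at least $9$ in $G$, so $\langle c,a\rangle$ has class at most $6$ and Theorem \ref{th:4.3}$(i)$ applies verbatim; your inductive substitute is precisely where the problematic $\binom{3^{n-1}}{3}$-term lives. For part $(ii)$ the paper writes $[g^{3},h]=g^{3}a^{3}$ with $a={}^{h}g^{-1}$ (so $a^{3^{n}}\in Z(G)$), splits $[c,[g^{3},h]]=[c,g^{3}]\,{}^{g^{3}}[c,a^{3}]$, and finishes with $(i)$ and a short collection in a subgroup of class at most $2$; your alternative expansion of $[c,[g^{3},h]]^{3^{n}}$ leaves brackets such as $[c,c,c,[g^{3},h]]$ of weight $8$, which do \emph{not} ``vanish in class $8$'' and whose $\binom{3^{n}}{3}$-th powers again require a $3^{n-1}$ bound you have not supplied. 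Until the valuation deficit is repaired --- most simply by adopting the $[(g^{3})^{3^{n}},x]$ expansion --- the proposal does not prove the lemma.
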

\begin{proof}
\begin{itemize}
\item[$(i)$] Set $m := 3^n$. 
Applying Lemma \ref{L:4.2} $(ii)$ on $[(g^3)^{3^n},g^3,h]$, we have
\begin{align*}
1 =\ &  [(g^3)^{3^n},g^3,h]\\ =\ & [[g^3,g^3,h],[g^3,g^3,g^3,g^3,h]]^{{{m}\choose{4}}} [[g^3,g^3,h],[g^3,g^3,g^3,h]]^{{m}\choose{3}}\\&[g^3,g^3,g^3,g^3,g^3,g^3,g^3,h]^{{m}\choose{6}}[g^3,g^3,g^3,g^3,g^3,g^3,h]^{{m}\choose{5}}\\&[g^3,g^3,g^3,g^3,g^3,h]^{{m}\choose{4}}[g^3,g^3,g^3,g^3,h]^{{m}\choose{3}}\\&[g^3,g^3,g^3,h]^{{m}\choose{2}} [g^3,g^3,h]^{m}
\end{align*}
Note that $3^{n-1} $ is a divisor of the powers corresponding to each term, and hence every term except for $[g^3,g^3,h]^m$ vanishes by applying Theorem \ref{th:4.3} and Remark \ref{R:1}. Hence, we have $[g^3,g^3,h]^{3^n} = 1$.
Further use Lemma \ref{L:4.2} on $[(g^3)^{3^n},h]$ as before.
Again, observe that $3^{n-1} $ is a divisor of the powers corresponding to each term and $[g^3,g^3,h]^{3^n} = 1$. Moreover, every other term except for $[g^3,h]^m$ vanishes on applying Theorem \ref{th:4.3} and Remark \ref{R:1}. Hence, we have $[g^3,h]^{3^n} = 1$.

\item[($ii$)] We have $[g^3,h] = g^3a^3$, where $a =\ ^hg^{-1}$. Now, $[c,[g^3,h]] = [c,g^3]\ ^{g^3}[c,a^3]$. Thus by applying Lemma \ref{L:4.1}, we have $[c,[g^3,h]]^{3^n} = \{[c,g^3]\ ^{g^3}[c,a^3]\}^{3^n} = [^{g^3}[c,a^3],[c,g^3]]^{{m}\choose{2}}[c,g^3]^m\ {^{g^3}[c,a^3]^m}$. Using $(i)$, we obtain $[c,[g^3,h]]^{3^n} =$\\ $[^{g^3}[c,a^3],[c,g^3]]^{{m}\choose{2}}$. Since, $m$ is a divisor of ${{m}\choose{2}}$ and $[c,g^3]^m = 1$, applying Lemma $2.10 (iv)$ of \cite{APT} yields $[c,[g^3,h]]^{3^n} = 1$.

\item[($iii$)] Consider the subgroup $H$ generated by the set $\{c, a\}$. Then $H$ is a group of class at most $6$. Now $[c,a^3]^{3^{n-1}} =1$, by Theorem \ref{th:4.3} $(i)$.

\item[($iv$)] Using Lemma \ref{L:4.1} on $([g^3,h]c)^{3^{n}}$ yields
\begin{align*}
([g^3,h]c)^m =&   [[g^3,h], [g^3,h], c, [g^3,h]]^{2{{m}\choose{3}}+3{{m}\choose{4}}} [[g^3,h], c, c, [g^3,h]]^{2{{m}\choose{3}}+3{{m}\choose{4}}} \\ & [c, c, c, [g^3,h]]^{{{m}\choose{4}}} [[g^3,h], c, [g^3,h]]^{{{m}\choose{2}}+2{{m}\choose{3}}}\\& [c, c, [g^3,h]]^{{{m}\choose{3}}} [c, [g^3,h]]^{{{m}\choose{2}}} [g^3,h]^m c^m.
\end{align*} 
Note that $3^{n-1}$ is a divisor of all the powers and $3^{n}$ is a divisor of ${{m}\choose{2}}$.
Thus from $(ii)$, $[c, [g^3,h]]^{{{m}\choose{2}}} =1$. Now for $b \in \gamma_4(G)$, $ [c, b] = [b,c]^{-1}$. Since, $c \in G^3$, we have $c = \prod_{i=1}^{k} a_i^3$, for some $a_i \in G$. Further, since $w(b) \geq 4$, $(iii)$ yields  $[b,c]^{3^{n-1}} = \prod_{i=1}^{k} {^{\prod_{1}^{i-1}{a_i^3}}{[b,a_i^3]^{3^{n-1}}}}=1$. Hence, $[c, b]^{3^{n-1}} = 1$. Therefore, every term other than $[g^3,h]^m c^m$ becomes trivial. Moreover, $[g^3,h]^m c^m$ vanishes by $(i)$ and hence the proof.
\end{itemize}
\end{proof}
The above Lemma can now be used to obtain a bound for the exponent of the exterior square for a $3$-group.
\begin{theorem}\label{T:4.12}
Let $G$ be a $3$-group of class less than or equal to $7$ and exponent $3^n$. Then $\e(G\wedge G) \mid\ 3\e(G)$.
\end{theorem}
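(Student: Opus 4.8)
The plan is to realise the exterior square inside Rocco's group and then run a collection argument. Via the isomorphism $\psi\colon G\otimes G\to[G,G^{\phi}]\le\gamma(G)$ with $\psi(g\otimes h)=[g,h^{\phi}]$, the group $G\wedge G$ becomes a quotient of $[G,G^{\phi}]$ by the image of $\nabla(G)$. Since $G$ has class at most $7$, the group $\gamma(G)$ is a $3$-group of class at most $8$ --- exactly the setting of Lemmata \ref{L:4.2} and \ref{L:3.2} --- and every $g\in G$ satisfies $g^{3^{n}}=1\in Z(\gamma(G))$ because $\e(G)=3^{n}$. The argument then splits into two parts: (a) bounding the order of a single simple exterior $g\wedge h$ by $3^{n+1}$, and (b) promoting this to a bound on $\e(G\wedge G)$.

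For (a): from $g^{3^{n}}=1$ we get $1=g^{3^{n}}\otimes h$, i.e. $[g^{3^{n}},h^{\phi}]=1$ in $\gamma(G)$. Expanding the left-hand side by Lemma \ref{L:4.2}$(ii)$ with the integer $3^{n}$ playing the role of $n$, and solving for $[g,h^{\phi}]^{3^{n}}$, expresses it as a product of iterated commutators of weight at least $3$ in $\gamma(G)$ --- the one-legged terms $[g,g,h^{\phi}],\,[g,g,g,h^{\phi}],\dots$ and the two-legged terms $[[g,g,h^{\phi}],[g,g,g,h^{\phi}]]$ and similar --- each raised to an exponent whose $3$-adic valuation is at least $n-1$; indeed $v_{3}\binom{3^{n}}{k}=n-v_{3}(k)$, so the only exponents of valuation exactly $n-1$ come from $\binom{3^{n}}{3}$, $\binom{3^{n}}{6}$ and the composite coefficients built from them, while all the rest have valuation at least $n$. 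One then checks, by a downward induction on weight that bottoms out at Lemma \ref{L:1.3}$(i)$ (an element of weight $\ge 9$ is trivial in $\gamma(G)$) and uses Lemma \ref{L:3.2} together with Theorem \ref{th:4.3} and Remark \ref{R:1}, that each of these weight-$\ge 3$ contributions dies once its exponent is multiplied by a further $3$. Feeding this back yields $[g,h^{\phi}]^{3^{n+1}}=1$ in $\gamma(G)$, hence $(g\wedge h)^{3^{n+1}}=1$; the single extra factor of $3$ --- the source of the $3\,\e(G)$ in the statement --- is exactly the price of upgrading the $3^{n-1}$-torsion of the weight-$\ge 3$ commutators to $3^{n}$-torsion.

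For (b): by Lemma \ref{L:1.3}$(i)$ every simple exterior $g\wedge h$ with $w(g)+w(h)\ge 9$ is trivial, so the subgroups $W_{k}\le G\wedge G$ generated by the simple exteriors $g\wedge h$ with $w(g)+w(h)\ge k$ give a descending chain $G\wedge G=W_{2}\supseteq W_{3}\supseteq\cdots\supseteq W_{9}=1$; since $[g_{1}\wedge h_{1},g_{2}\wedge h_{2}]=[g_{1},h_{1}]\wedge[g_{2},h_{2}]$ and $w([x,y])\ge w(x)+w(y)$, a routine check shows $[W_{i},W_{j}]\subseteq W_{i+j}$, so this is a central series and $G\wedge G$ is nilpotent. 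An arbitrary element of $G\wedge G$ is a product of simple exteriors; applying the collection formulae (Lemmata \ref{L:4.1}, \ref{L:4.2}) now inside $G\wedge G$, with the individual factors controlled by part (a) and the correction terms --- which by the commutator identity land in deeper $W_{k}$'s --- handled by induction on $k$, one obtains $x^{3^{n+1}}=1$ for every $x\in G\wedge G$, that is $\e(G\wedge G)\mid 3^{n+1}=3\,\e(G)$.

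The main obstacle I anticipate is part (a), and within it the bookkeeping that matches the commutators actually produced by expanding $[g^{3^{n}},h^{\phi}]$ --- nested brackets $[g,\dots,g,h^{\phi}]$ and bracketed products of such --- against the precise shapes $[g^{3},h]$, $[c,[g^{3},h]]$ and $([g^{3},h]c)$ that Lemma \ref{L:3.2} is equipped to handle. This calls for rewriting the nested commutators in terms of $[g^{3},\cdot]$ modulo still higher-weight terms, so as to bring Theorem \ref{th:4.3} and Lemma \ref{L:3.2} to bear, and for tracking term by term which contributions are $3^{n-1}$-torsion and which are $3^{n}$-torsion, so that one extra power of $3$ genuinely suffices. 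Once that is in place, part (b) is comparatively routine.
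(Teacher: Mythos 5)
Your route is genuinely different from the paper's --- the paper never tries to bound a general simple exterior $g\wedge h$; it filters through $G^{3}$, using the sequence $G^{3}\wedge G\to G\wedge G\to G/G^{3}\wedge G/G^{3}\to 1$, kills the quotient with Moravec's Proposition 7 for exponent-$3$ groups (that is where its factor of $3$ comes from), and handles the subgroup via Lemma \ref{L:3.2}, whose statements all concern commutators $[g^{3},h]$ with a cube in one slot, together with the product statement $([g^{3},h]c)^{3^{n}}=1$ of Lemma \ref{L:3.2}$(iv)$. Measured against that, your part (a) has a shape mismatch you yourself flag but do not resolve: expanding $[g^{3^{n}},h^{\phi}]$ by Lemma \ref{L:4.2}$(ii)$ produces commutators $[g,g,h^{\phi}]$, $[g,g,g,h^{\phi}]$, etc., with no cube anywhere, and these cannot simply be ``rewritten in terms of $[g^{3},\cdot]$ modulo higher weight'': $[g,g,h]$ is a basic commutator independent of $[g^{3},[g,h]]$ modulo $\gamma_{4}$; at best $[g,g,h]^{3}\equiv[g^{3},[g,h]]$ modulo higher-weight corrections, and those corrections carry their own unresolved torsion questions. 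So the claim that ``each weight-$\ge 3$ contribution dies once its exponent is multiplied by a further $3$'' is not backed by Theorem \ref{th:4.3} or Lemma \ref{L:3.2} as they stand.

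The more decisive gap is part (b). Even granting that every simple exterior satisfies $(g\wedge h)^{3^{n+1}}=1$, it does not follow that $\e(G\wedge G)\mid 3^{n+1}$: $G\wedge G$ sits inside $\gamma_{2}(\gamma(G))$ and has class up to $4\ge p=3$, so it is not regular, and a $3$-group of class $4$ generated by elements of order dividing $3^{n+1}$ can have larger exponent. Concretely, collecting $(xy)^{3^{n+1}}$ via Lemma \ref{L:4.1} produces correction terms such as $[y,y,x]^{\binom{3^{n+1}}{3}}$ whose exponent has $3$-adic valuation only $n$; to kill them you need the weight-$\ge 6$ simple exteriors $[y,[y,x]]$ to be $3^{n}$-torsion --- a strictly stronger statement than part (a) --- and for products of many factors the corrections are themselves long products of such exteriors, so the ``induction on $k$'' needs a precise inductive hypothesis of the form $\e(W_{k})\mid 3^{m_{k}}$ with improving bounds, which you neither formulate nor verify. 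This step is exactly the hard part that the paper outsources to Moravec's Proposition 7 (for the exponent-$3$ quotient) and to the product identity of Lemma \ref{L:3.2}$(iv)$ (for $\mathrm{im}(G^{3}\wedge G)$); calling it ``comparatively routine'' leaves the proof incomplete.
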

\begin{proof}
 Consider the following exact sequence,
\begin{align*}
 G^3\wedge G \rightarrow G \wedge G \rightarrow G/G^3\wedge G/G^3 \rightarrow 1,
\end{align*}
which yields $\e( G \wedge G )\mid \e(im( G^{3}\wedge G))\ \e( G/G^3\wedge G/G^3 )$. 
We have the isomorphism $\psi : G \otimes G \rightarrow [G, G^{\phi}]$, defined by $\psi(g \otimes h) = [g, h^{\phi}] $, where $G^{\phi}$ is an isomorphic copy of $G$. As mentioned earlier, $[G,G^{\phi}]$ is a subgroup of $\gamma(G)$, a group of class at most $8$ (cf: \cite{R}). Now for $g,h,g_i,h_i \in G$, $i\in \{1,2\}$, using Lemma \ref{L:3.2}, we obtain $[g^3, h^{\phi}]^{3^{n}} = 1$ and $([g_1^3, h_1^{\phi}] [g_2^3, h_2^{\phi}])^{3^{n}} = 1$. Therefore, $(g^3\wedge h)^{3^{n}} = 1$ and $((g_1^3\wedge h_1) (g_2^3\wedge h_2))^{3^{n}} = 1$. Thus, we have $\e(im(G^3 \wedge G)) \mid 3^n$.  Furthermore, $\e(G/G^{3})$ being equal to $3$, $\e( G/G^{3}\wedge G/G^{3} ) \mid 3$ by Proposition $7$ of \cite{PM2}. Hence $\e(G \wedge G) \mid 3^{n+1}$.

\end{proof}

\section{5-groups of nilpotency class at most 7}
We obtain the same bound for the exponent of the exterior square of $5$-groups as we did for $3$-groups in the previous section.

Even though part $(vi)$ in the following Lemma implies most of the preceding statements, they have been stated as such to facilitate the proof of $(vi)$
\begin{lemma}\label{L:5.1}
Let $G$ be a $5$-group of class $8$. For $g,a \in G$ , we have
\begin{itemize}
\item[($i$)] If $g^{5^n} =1$ and  $w(a) \geq 6$,  then $[g^5,a]^{5^{n-1}} = 1.$
\item[($ii$)] If $g^{5^n} =1$ and  $w(a) \geq 5$,  then $[g^5,a]^{5^{n-1}} = 1.$
\item[($iii$)] If $g^{5^n} =1$ and $w(a) \geq 4$,  then $[g^5,a]^{5^{n-1}} = 1.$
\item[($iv$)] If  $g^{5^n} =1$ and $w(a) \geq 3$,  then $[g^5,a]^{5^{n-1}} = 1.$
\item[($v$)] If  $g^{5^n} =1$ and $w(a) \geq 3$,  then $[g^5,g^5,a]^{5^{n-2}} = 1.$
\item[($vi$)] If  $g^{5^n} =1$ and $w(a) \geq 2$,  then $[g^5,a]^{5^{n-1} }= 1.$
\item[($vii$)] If  $g^{5^n} =1$,  then $[g^5,g^5,g^5,a]^{5^{n-2}} = 1.$
\end{itemize}
\end{lemma}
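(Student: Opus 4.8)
The plan is to establish the seven parts in the order $(i),(ii),(iii),(v),(iv),(vii),(vi)$, by a downward induction on $w(a)$ whose single engine is the collection formula of Lemma~\ref{L:4.2}$(ii)$; this ordering is exactly what the observation preceding the Lemma points at, and $(v),(vii)$ — with their weaker conclusion $5^{n-2}$ — are the stepping stones that make $(iv)$ and $(vi)$ go through. Setting $m:=5^{n-1}$, the hypothesis $g^{5^n}=1$ reads $(g^5)^m=1$, so Lemma~\ref{L:4.2}$(ii)$ applied with $g\mapsto g^5$, $h\mapsto a$, $n\mapsto m$ yields
\[
1 \;=\; [(g^5)^m,a] \;=\; (\text{higher terms})\cdot[g^5,a]^m,
\]
where the higher terms are the left-normed commutators $[\,\underbrace{g^5,\dots,g^5}_{j},a\,]^{\binom{m}{j}}$ for $2\le j\le 7$ together with the nested commutators of $g^5$ and $a$ occurring in Lemma~\ref{L:4.2}$(ii)$. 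It then suffices to prove that every higher term is trivial.

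Three mechanisms do this. (a) Since $G$ has class $8$, a commutator of weight $\ge 9$ vanishes; one checks that for $w(a)\ge 2$ all the nested terms of Lemma~\ref{L:4.2}$(ii)$ have weight $\ge 9$ \emph{except} $[[g^5,a],[g^5,g^5,a]]$ (weight $\ge 7$) and $[[g^5,a],[g^5,g^5,g^5,a]]$ (weight $\ge 8$), and these occur only in the case $w(a)=2$. (b) $\gamma_8(G)$ is central, so a higher term $z=[g^5,w]\in\gamma_8(G)$ satisfies $z^{\binom{m}{j}}=[(g^5)^{\binom{m}{j}},w]=[g^{5\binom{m}{j}},w]$; by Kummer's theorem $v_5\!\bigl(\tbinom{m}{j}\bigr)=n-1$ for $j\in\{2,3,4,6,7\}$, whence $5^n\mid 5\tbinom{m}{j}$ and $g^{5\binom{m}{j}}=1$, killing $z^{\binom{m}{j}}$. (c) A left-normed higher term $[\,\underbrace{g^5,\dots,g^5}_{j},a\,]=[g^5,d']$ with $w(d')=w(a)+j-1$ is killed by the already-proven case at weight $w(a)+j-1$, since $\binom{m}{j}$ is divisible by the power of $5$ furnished by that case. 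The base case $(i)$ needs no predecessor: only the $j=2$ term survives, it lies in $\gamma_8(G)$, and mechanism (b) disposes of it; $(ii),(iii)$ then follow by adding mechanism (c) with the already-established higher-weight cases.

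The one subtle point is the term $[\,\underbrace{g^5,\dots,g^5}_{5},a\,]^{\binom{m}{5}}$: here $v_5\!\bigl(\tbinom{m}{5}\bigr)=n-2$ only, and this term survives by weight precisely when $w(a)\in\{2,3\}$. This forces $(v),(vii)$ to be proved first: writing $[\,\underbrace{g^5,\dots,g^5}_{5},a\,]=[g^5,g^5,[g^5,g^5,g^5,a]]$ and invoking $(v)$ (legitimate since $w([g^5,g^5,g^5,a])\ge 3$) supplies exactly the $5^{n-2}$ power needed. Once $(iii)$ is known, $(v)$ is immediate because $[g^5,g^5,a]=[g^5,[g^5,a]]$ with $w([g^5,a])\ge 4$; and once $(iv)$ is known, $(vii)$ is immediate because $[g^5,g^5,g^5,a]=[g^5,[g^5,g^5,a]]$ with $w([g^5,g^5,a])\ge 3$. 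This closes the cycle $(i)\to(ii)\to(iii)\to(v)\to(iv)\to(vii)$ without circularity.

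For the final statement $(vi)$ ($w(a)\ge 2$) one must additionally handle the two surviving nested terms from (a). The term $[[g^5,a],[g^5,g^5,g^5,a]]^{\binom{m}{4}}$ lies in $\gamma_8(G)$, so by centrality it equals $[[g^5,a],[g^5,g^5,g^5,a]^{\binom{m}{4}}]$, and $[g^5,g^5,g^5,a]^{\binom{m}{4}}=1$ by $(vii)$ together with $v_5\!\bigl(\tbinom{m}{4}\bigr)=n-1\ge n-2$. For $[[g^5,a],[g^5,g^5,a]]^{\binom{m}{3}}$, put $c:=[g^5,a]$ (weight $\ge 3$); commutators $[c,[g^5,c],\dots]$ of weight $\ge 9$ vanish, so $[c,[g^5,c]]^{\binom{m}{3}}=[c,[g^5,c]^{\binom{m}{3}}]$, and $[g^5,c]^{\binom{m}{3}}=1$ by $(iv)$ applied to $c$ and $v_5\!\bigl(\tbinom{m}{3}\bigr)=n-1$. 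I expect the main obstacle to be not any single step but the bookkeeping: checking that after the substitution $g\mapsto g^5$, $h\mapsto a$ \emph{every} term of Lemma~\ref{L:4.2}$(ii)$ falls under (a), (b) or (c), that the stated order of proof is genuinely acyclic, and that the valuations $v_5\!\bigl(\tbinom{5^{n-1}}{j}\bigr)$ for $2\le j\le 7$ line up with the exponents claimed in the seven parts.
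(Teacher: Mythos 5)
Your overall architecture for parts $(i)$--$(iv)$ (downward induction on $w(a)$ driven by the collection formula of Lemma~\ref{L:4.2}$(ii)$, with weight, centrality and $5$-adic valuation arguments disposing of the higher terms) matches the paper's, but there is a genuine gap at the step you yourself flag as the crux: the derivation of $(v)$. You claim $(v)$ is ``immediate'' from $(iii)$ because $[g^5,g^5,a]=[g^5,[g^5,a]]$ with $w([g^5,a])\ge 4$; but $(iii)$ applied to $[g^5,a]$ yields only $[g^5,g^5,a]^{5^{n-1}}=1$, whereas $(v)$ asserts the strictly \emph{stronger} statement $[g^5,g^5,a]^{5^{n-2}}=1$ --- an element of order dividing $5^{n-1}$ need not have order dividing $5^{n-2}$. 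The same off-by-one error in the exponent occurs in your derivation of $(vii)$ from $(iv)$. This is not cosmetic: by your own design the full strength $5^{n-2}$ of $(v)$ is exactly what is needed to kill the term $[\underbrace{g^5,\dots,g^5}_{5},a]^{\binom{5^{n-1}}{5}}$ in the proof of $(iv)$, since $v_5\bigl(\tbinom{5^{n-1}}{5}\bigr)=n-2$ only; with $(v)$ unproven, the chain $(iv)\to(vii)\to(vi)$ collapses as well.

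The paper obtains the missing factor of $5$ differently. For $(v)$ and $(vii)$ it applies the collection formula a \emph{second} time, expanding the outer $g^5$ as a fifth power of $g$ via Lemma~\ref{L:4.2}$(ii)$ with $n=5$: this writes $[g^5,[g^5,a]]$ as a product of commuting terms each carrying an exponent divisible by $5$ (the coefficients $\binom{5}{j}$), so that raising to the power $5^{n-2}$ produces exponents divisible by $5^{n-1}$, which are then killed using $(iv)$ together with Lemma 2.10 of \cite{APT}. Consequently $(v)$ and $(vii)$ come \emph{after} $(iv)$ in the paper, and the problematic $j=5$ term in the proof of $(iv)$ is handled not by $(v)$ but by the external Lemma 2.8 of \cite{APT}, which directly gives $[g^5,g^5,g^5,g^5,g^5,a]^{5^{n-2}}=1$. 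To repair your argument you need either such an external input or an independent proof of $(v)$ at the $5^{n-2}$ level; it does not follow formally from $(iii)$.
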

\begin{proof}
\begin{itemize}
\item[($i$)] Since $e(g^5) = 5^{n-1}$, we have $[g^5,a]^{5^{n-1}} = 1$ by applying Lemma $2.10(iii)$ of \cite{APT}. 
\item[($ii$)] Using Lemma \ref{L:4.2} for $[(g^5)^{5^{n-1}},a]$ yields
$$1 = [g^{5^n},a] = [g^5,g^5,g^5,a]^{{5^{n-1}\choose{3}}}
[g^5,g^5,a]^{{5^{n-1}\choose{2}}}[g^5,a]^{5^{n-1}}.$$
Note that $5^{n-1}$ is a divisor of all the powers in the above expression. Hence, every term except  $[g^5,a]^{5^{n-1}}$ vanishes by $(i)$. Therefore, $[g^5,a]^{5^{n-1}} = 1$.
\item[($iii$)] Further applying Lemma \ref{L:4.2}, as in $(ii)$, gives
$$1 = [g^{5^n},a] = [g^5,g^5,g^5,g^5,a]^{{5^{n-1}\choose{4}}}[g^5,g^5,g^5,a]^{{5^{n-1}\choose{3}}}
[g^5,g^5,a]^{{5^{n-1}\choose{2}}}[g^5,a]^{5^{n-1}}.$$
Note that every term except  $[g^5,a]^{5^{n-1}}$ vanishes by $(ii)$. Thus, $[g^5,a]^{5^{n-1}} = 1$.
\item[($iv$)] Again, applying Lemma \ref{L:4.2} for $[(g^5)^{5^{n-1}},a]$, we obtain
\begin{align*}
1 = [g^{5^n},a] =&\  [g^5,g^5,g^5,g^5,g^5,a]^{{5^{n-1}\choose{5}}}[g^5,g^5,g^5,g^5,a]^{{5^{n-1}\choose{4}}}\\&[g^5,g^5,g^5,a]^{{5^{n-1}\choose{3}}}
[g^5,g^5,a]^{{5^{n-1}\choose{2}}}[g^5,a]^{5^{n-1}}.
\end{align*}
Now using Lemma 2.8 of \cite{APT}, we have $[g^5,g^5,g^5,g^5,g^5,a]^{5^{n-2}} = 1$. Moreover, every other term except $[g^5,a]^{5^{n-1}}$ becomes trivial by $(ii)$. Hence, $[g^5,a]^{5^{n-1}} = 1$.
\item[($v$)]For $[g^5,[g^5,a]]$, applying Lemma \ref{L:4.2} yields
$$[g^5,g^5,a]^{5^{n-2}} = \{[g,g,g,g,g^5,a]^5[g,g,g,g^5,a]^{10}[g,g,g^5,a]^{10}[g,g^5,a]^5\}^{5^{n-2}}.$$
Note that every term commutes and hence the power $5^{n-2}$ can be distributed. We have, $[g^5,a]^{5^{n-1}} = 1$ by $(iv)$. Now observe that every term vanishes
 using Lemma 2.10 of \cite{APT}. 
\item[($vi$)] By using Lemma \ref{L:4.2} for $[(g^5)^{5^{n-1}},a]$, we obtain
\begin{align*}
1 = [g^{5^n},a] =&\ [[g^5,a],[g^5,g^5,g^5,a]]^{{5^{n-1}}\choose{4}}[[g^5,a],[g^5,g^5,a]]^{{5^{n-1}}\choose{3}}\\&[g^5,g^5,g^5,g^5,g^5,g^5,a]^{{5^{n-1}\choose{6}}} [g^5,g^5,g^5,g^5,g^5,a]^{{5^{n-1}\choose{5}}}\\&[g^5,g^5,g^5,g^5,a]^{{5^{n-1}\choose{4}}}[g^5,g^5,g^5,a]^{{5^{n-1}\choose{3}}}
[g^5,g^5,a]^{{5^{n-1}\choose{2}}}[g^5,a]^{5^{n-1}}.
\end{align*}
Note that $[g^5,g^5,a]^{{5^{n-1}\choose{2}}} = 1$, by $(iv)$. Every other term becomes trivial by using $(v)$ and then applying Lemma 2.10 of \cite{APT}.
\item[($vii$)] Again on applying Lemma \ref{L:4.2} to $[g^5,[g^5,g^5,a]]$, we have
\begin{align*}
[g^5,g^5,g^5,a]^{5^{n-2}} =& \{[g,g,g,g,g,g^5,g^5,a][g,g,g,g,g^5,g^5,a]^5\\&[g,g,g,g^5,g^5,a]^{10}[g,g,g^5,g^5,a]^{10}[g,g^5,g^5,a]^5\}^{5^{n-2}}.
\end{align*}
Note that every term commutes and hence the power $5^{n-2}$ can be distributed. We have, $[g,g,g,g,g,g^5,g^5,a]^{5^{n-2}} = 1$ by applying Lemma 2.8 of \cite{APT}. Furthermore, $[g^5,a]^{5^{n-1}} = 1$ by $(iv)$. Now observe that every term vanishes using Lemma 2.10 of \cite{APT}. 
\end{itemize}
\end{proof}
Now, using the above Lemma helps us in computing the bounds of certain commutators which are essential towards obtaining a bound for the exponent of Schur Multiplier of a $5$-group.
\begin{lemma}\label{L:5.2}
Let $G$ be a $5$-group of class $8$. For $g,h \in G$ and $c \in \gamma_2(G)\cap G^5$,
\begin{itemize}
\item[$(i)$] If $g^{5^n}\in Z(G)$, then $[g^5,h]^{5^{n-1}} = 1$.
\item[$(ii)$] If $g^{5^n}\in Z(G)$, then $[c,g^5,h]^{5^{n-1}} = 1$.
\item[$(iii)$] If $g^{5^n}\in Z(G)$ and $c^{5^{n-1}} = 1$, then $([g^5,h]c)^{5^{n-1}} = 1$.
\end{itemize}
\end{lemma}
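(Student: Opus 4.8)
The plan is to mimic exactly the structure of the proof of Lemma \ref{L:3.2}, replacing the prime $3$ by $5$ and citing Lemma \ref{L:5.1} in place of Theorem \ref{th:4.3} and Remark \ref{R:1} wherever a bound on a commutator of the form $[g^5,\dots,g^5,a]$ is needed. For part $(i)$, set $m:=5^{n-1}$ and first expand $[(g^5)^{5^{n-1}},g^5,h]=1$ using Lemma \ref{L:4.2}$(ii)$; since $5^{n-1}$ divides each binomial coefficient appearing there except possibly the leading one, and since Lemma \ref{L:5.1} kills every commutator $[g^5,g^5,g^5,a]^{5^{n-2}}$, $[g^5,a]^{5^{n-1}}=1$ for $w(a)\ge 2$, etc., every term collapses except $[g^5,g^5,h]^{m}$, yielding $[g^5,g^5,h]^{5^{n-1}}=1$. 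Feeding this back into the expansion of $[(g^5)^{5^{n-1}},h]=1$ via Lemma \ref{L:4.2}$(ii)$ again, and using Lemma \ref{L:5.1} once more on the higher-weight commutators, leaves only $[g^5,h]^{m}$, giving $(i)$. I should be careful here that the ambient group $\gamma(G)$, or rather the relevant subgroup, has class at most $8$ so that Lemma \ref{L:4.2} genuinely applies; this is the same technical point invoked in Theorem \ref{T:4.12}.

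For part $(ii)$, write $[g^5,h]=g^5a^5$ with $a={}^{h}g^{-1}$ (this uses $[g^5,h]={}^{g^5}({}^{h}g^{-5})\cdot$ rearranged as in the class-$8$ conventions — I will state it in the right-normed form actually used), so that $[c,[g^5,h]]=[c,g^5]\,{}^{g^5}[c,a^5]$. Raising to the $5^{n-1}$ power and expanding the product of two commuting-modulo-higher-weight terms with Lemma \ref{L:4.1}, the factors $[c,g^5]^{5^{n-1}}$ and ${}^{g^5}[c,a^5]^{5^{n-1}}$ vanish by $(i)$ (note $c\in\gamma_2(G)\cap G^5$, so $c$ is a product of fifth powers, and $w(c)\ge 2$ lets Lemma \ref{L:5.1}$(vi)$ or part $(i)$ apply termwise), and the remaining correction term is a commutator of two things each killed to a power dividing $5^{n-1}$, hence trivial by Lemma $2.10(iv)$ of \cite{APT}. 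This gives $[c,g^5,h]^{5^{n-1}}=1$. Strictly I am proving $[c,[g^5,h]]^{5^{n-1}}=1$, which is the same statement since $[c,g^5,h]$ is right-normed.

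For part $(iii)$, apply Lemma \ref{L:4.1} to $([g^5,h]c)^{5^{n-1}}$ exactly as in Lemma \ref{L:3.2}$(iv)$. Every commutator term on the right carries a binomial coefficient divisible by $5^{n-1}$ except the $\binom{m}{2}$-term $[c,[g^5,h]]^{\binom{m}{2}}$, which dies by $(ii)$, and the tail $[g^5,h]^{m}c^{m}$, which dies by $(i)$ together with the hypothesis $c^{5^{n-1}}=1$. The intermediate terms of the form $[c,b]$ with $b\in\gamma_4(G)$ (or higher weight) vanish because $c$ is a product of fifth powers and Lemma \ref{L:5.1}$(iii)$ (or $(iv)$) bounds $[b,a^5]^{5^{n-2}}$ for such $b$; one then expands $[b,c]$ termwise over the factors of $c$ using \eqref{eq:1.1}. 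Collecting, $([g^5,h]c)^{5^{n-1}}=1$.

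The main obstacle I anticipate is purely bookkeeping: verifying that in the $5$-group setting the binomial coefficients $\binom{5^{n-1}}{k}$ for $k=2,\dots,7$ are divisible by exactly the right power of $5$ so that each term is annihilated by the available bound from Lemma \ref{L:5.1} (which loses only one or two factors of $5$), and tracking which parts of Lemma \ref{L:5.1} — especially the weight-$2$ case $(vi)$ and the depth-two cases $(v),(vii)$ — are needed for each commutator that appears. There is also a small subtlety that $G$ has class $8$ while several cited results (Lemma \ref{L:4.1}, Theorem \ref{th:4.3}) are stated for class $5$ or $6$: these are applied to appropriate sections or subgroups of bounded class, exactly as in the proof of Lemma \ref{L:3.2}, and I will note this explicitly.
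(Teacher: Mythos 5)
Your proposal is correct and follows essentially the same route as the paper: part $(i)$ by expanding $[(g^5)^{5^{n-1}},h]$ (via the two-step reduction through $[g^5,g^5,h]$, exactly as in Lemma \ref{L:3.2}$(i)$) with Lemma \ref{L:4.2} and killing the higher terms with Lemma \ref{L:5.1}; part $(ii)$ by the decomposition $[g^5,h]=g^5a^5$, Lemma \ref{L:4.1}, part $(i)$, and Lemma $2.10(iv)$ of \cite{APT}; and part $(iii)$ by expanding $([g^5,h]c)^{m}$ with Lemma \ref{L:4.1} and writing $c$ as a product of fifth powers. The only cosmetic difference is that for the intermediate $[c,b]$ terms in $(iii)$ the paper invokes part $(i)$ termwise rather than Lemma \ref{L:5.1}$(iii)$--$(iv)$, which changes nothing.
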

\begin{proof} 
\begin{itemize}
\item[($i$)] Set $m:= 5^{n-1}.$
Apply Lemma \ref{L:4.2} to $[(g^5)^{5^n-1},h]$ as was done in Lemma \ref{L:3.2}($i$).
Observe that every term in the expression, so obtained, except $[g^5,h]^m$ becomes trivial by using Lemma \ref{L:5.1} and Lemmata $2.8,2.10$ of \cite{APT} appropriately. Therefore, we have $[g^5,h]^{5^{n-1}} =1$.
 \item[($ii$)] We have $[g^5,h] = g^5a^5$, where $a =\ ^hg^{-1}$. Now, $[c,[g^5,h]] = [c,g^5]\ ^{g^5}[c,a^5]$. Thus by applying Lemma \ref{L:4.1}, we have\\ $[c,[g^5,h]]^{5^{n-1}}  = [^{g^5}[c,a^5],[c,g^5]]^{{m}\choose{2}}[c,g^5]^m\ {^{g^5}[c,a^5]^m}$. Using $(i)$, we obtain $[c,[g^5,h]]^{5^{n-1}} =[^{g^5}[c,a^5],[c,g^5]]^{{m}\choose{2}}$. Since, $m$ is a divisor of ${{m}\choose{2}}$ and $[c,g^5]^m = 1$, applying Lemma $2.10 (iv)$ of \cite{APT} yields $[c,[g^5,h]]^{5^{n-1}} = 1$.

 \item[($iii$)] Using Lemma \ref{L:4.1} on $([g^5,h]c)^{5^{n-1}}$ yields an expression connecting $([g^5,h]c)^m$ with $[g^5,h]^m c^m$, as in Lemma \ref{L:3.2} ($iv$). Note that $5^{n-1}$ is a divisor of all the powers. Thus, $[c, [g^5,h]]^{{{m}\choose{2}}} = 1$ by $(ii)$.
Now since, $c \in G^5$, we have $c = \prod_{i=1}^{k} a_i^5$, for some $a_i \in G$. For $b \in \gamma_4(G)$, $(i)$ yields  $[b,c]^{5^{n-1}} = \prod_{i=1}^{k} {^{\prod_{1}^{i-1}{a_i^5}}{[b,a_i^5]^{5^{n-1}}}}=1$. Hence, $[c, b]^{5^{n-1}} = 1$. Therefore, every term other than $[g^5,h]^m c^m$ vanishes. Moreover, $[g^5,h]^m c^m$ becomes trivial by $(i)$ and hence the proof.
\end{itemize}
 \end{proof}

\begin{theorem}\label{T:4.16}
Let $G$ be a $5$-group of class less than or equal to $7$ and exponent $5^n$. Then $\e(G\wedge G) \mid\ 5\e(G)$.
\end{theorem}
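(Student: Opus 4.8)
The statement is the $5$-analogue of Theorem~\ref{T:4.12}, so the plan is to run exactly that argument with $3$ replaced by $5$ and Lemma~\ref{L:3.2} replaced by Lemma~\ref{L:5.2}. Write $\e(G)=5^{n}$ and begin with the exact sequence
\[
 G^{5}\wedge G \longrightarrow G\wedge G \longrightarrow G/G^{5}\wedge G/G^{5}\longrightarrow 1 ,
\]
which yields $\e(G\wedge G)\mid \e\bigl(\mathrm{im}(G^{5}\wedge G)\bigr)\,\e\bigl(G/G^{5}\wedge G/G^{5}\bigr)$. It therefore suffices to prove the two divisibilities $\e\bigl(\mathrm{im}(G^{5}\wedge G)\bigr)\mid 5^{\,n-1}$ and $\e\bigl(G/G^{5}\wedge G/G^{5}\bigr)\mid 5^{2}$, since then $\e(G\wedge G)\mid 5^{\,n+1}=5\,\e(G)$.

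For the first divisibility I would pass, as in Theorem~\ref{T:4.12}, to Rocco's group $\gamma(G)$ through the isomorphism $\psi\colon G\otimes G\to[G,G^{\phi}]$, $\psi(g\otimes h)=[g,h^{\phi}]$, where $[G,G^{\phi}]\leq\gamma(G)$ and $\gamma(G)$ has nilpotency class at most $8$ because $G$ has class at most $7$. Since $\e(G)=5^{n}$ we have $g^{5^{n}}=1\in Z(\gamma(G))$, so Lemma~\ref{L:5.2}$(i)$, applied inside $\gamma(G)$, gives $[g^{5},h^{\phi}]^{5^{\,n-1}}=1$, i.e. $(g^{5}\otimes h)^{5^{\,n-1}}=1$ and hence $(g^{5}\wedge h)^{5^{\,n-1}}=1$. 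To pass from a single simple tensor to a product of them, put $c=[g_{2}^{5},h_{2}^{\phi}]$; then $c=g_{2}^{5}\cdot{}^{h_{2}^{\phi}}(g_{2}^{-5})$ is a product of two fifth powers, so $c\in\gamma_{2}(\gamma(G))\cap\gamma(G)^{5}$, and $c^{5^{\,n-1}}=1$ by what was just proved, whence Lemma~\ref{L:5.2}$(iii)$ yields $\bigl([g_{1}^{5},h_{1}^{\phi}][g_{2}^{5},h_{2}^{\phi}]\bigr)^{5^{\,n-1}}=1$; absorbing a longer tail $[g_{2}^{5},h_{2}^{\phi}]\cdots[g_{k}^{5},h_{k}^{\phi}]$ into $c$ and inducting on $k$ extends the bound to an arbitrary product. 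As $\mathrm{im}(G^{5}\wedge G)$ is generated by the simple tensors $g^{5}\wedge h$ (expand $\bigl(\prod a_{i}^{5}\bigr)\wedge h$ using the defining relations of Definition~\ref{D:1.2}), this gives $\e\bigl(\mathrm{im}(G^{5}\wedge G)\bigr)\mid 5^{\,n-1}$.

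For the second divisibility, $G/G^{5}$ has exponent $5$ and nilpotency class at most $7$, and I would invoke a bound of the form $\e(H\wedge H)\mid 5^{2}$ for a group $H$ of exponent $5$ (of class at most $7$, which is the relevant case) — the $p=5$ counterpart of the fact used, via Proposition~7 of \cite{PM2}, for $p=3$ in Theorem~\ref{T:4.12}. Multiplying the two divisibilities gives $\e(G\wedge G)\mid 5\,\e(G)$, as required.

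I expect essentially no serious obstacle here — the argument is a transcription of Theorem~\ref{T:4.12} once Lemma~\ref{L:5.2} is in hand. The points that require care are in the first divisibility: checking that Lemma~\ref{L:5.2} applies \emph{verbatim} inside $\gamma(G)$ (both the hypothesis $g^{5^{n}}\in Z(\gamma(G))$ and the hypothesis $c\in\gamma_{2}(\gamma(G))\cap\gamma(G)^{5}$ with $c^{5^{\,n-1}}=1$), and that the inductive use of part~$(iii)$ genuinely bounds every element of $\mathrm{im}(G^{5}\wedge G)$ and not merely the products of two simple tensors. The only input from outside the present excerpt is the exponent-$5$ estimate for $G/G^{5}\wedge G/G^{5}$.
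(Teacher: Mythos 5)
Your proposal is correct and follows essentially the same route as the paper: the same exact sequence $G^{5}\wedge G\to G\wedge G\to G/G^{5}\wedge G/G^{5}\to 1$, the same passage to Rocco's group to apply Lemma~\ref{L:5.2} and bound $\e(\mathrm{im}(G^{5}\wedge G))$ by $5^{n-1}$, and the same reliance on an exponent-$5^{2}$ bound for the exterior square of the exponent-$5$ quotient (which the paper supplies as Theorem~6.1 of \cite{APT}). No gaps beyond what the paper itself leaves implicit.
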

\begin{proof}
Consider the following exact sequence,
\begin{align*}
 G^5 \wedge G \rightarrow G \wedge G \rightarrow G/G^5\wedge G/G^5 \rightarrow 1.
\end{align*}
We obtain, $\e( G \wedge G )\mid \e(im( G^{5}\wedge G))\ \e( G/G^5\wedge G/G^5 )$. Proceeding as in the proof of Theorem \ref{T:4.12} and using Lemma \ref{L:5.2} instead of Lemma \ref{L:3.2}  yields $\e(im( G^{5}\wedge G))\mid 5^{n-1}$. Moreover, when $\e(G) = 5$, we have $\e(G\wedge G)\mid 5^2$ by Theorem $6.1$ of \cite{APT}. Therefore, $\e(G\wedge G) \mid 5\e(G)$ and hence the proof.

\end{proof}

\section{Validity of Conjecture \ref{C3} }

For an odd order group of nilpotency class $7$, we have the following bound for the exponent of Schur Multiplier.
\begin{theorem}\label{T:4.17}
Let $p$ be an odd prime and let $G$ be a $p$-group. If the nilpotency class of $G$ is $7$, then $\e(G\wedge G) \mid\ p\e(G)$. In particular, $\e(H_2(G, \mathbb{Z})) \mid\ p\e(G)$.
\end{theorem}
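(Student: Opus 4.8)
The proof naturally divides according to the value of $p$. For $p=3$ the claim is Theorem \ref{T:4.12} and for $p=5$ it is Theorem \ref{T:4.16}, so the remaining case is $p\ge 7$, and the point of this hypothesis is that then $G$ has nilpotency class $7\le p$, so Rocco's group $\gamma(G)$ has class at most $8\le p+1$ and the collection formulae of Lemma \ref{L:4.2} are available inside $[G,G^{\phi}]\le\gamma(G)$. Write $\e(G)=p^{n}$. The plan is to repeat, for $p\ge 7$, the scheme already used for $3$- and $5$-groups: from the exact sequence
$$G^{p}\wedge G\longrightarrow G\wedge G\longrightarrow G/G^{p}\wedge G/G^{p}\longrightarrow 1$$
one gets $\e(G\wedge G)\mid\e(\mathrm{im}(G^{p}\wedge G))\,\e(G/G^{p}\wedge G/G^{p})$, and it suffices to show that the first factor divides $p^{n}$ and the second divides $p$.

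For the second factor, $G/G^{p}$ has exponent $p$ and class at most $7$ (smaller than $p$ when $p\ge 11$, equal to $p$ when $p=7$), so the bound $\e(G/G^{p}\wedge G/G^{p})\mid p$ follows from the same exponent-$p$ input used for $3$-groups, Proposition $7$ of \cite{PM2}. For the first factor I would establish the exact analogues of Lemma \ref{L:3.2} and Lemma \ref{L:5.2}: if $g^{p^{n}}\in Z(G)$ then $[g^{p},h]^{p^{n}}=1$, and $([g^{p},h]c)^{p^{n}}=1$ for $c\in\gamma_{2}(G)\cap G^{p}$ with $c^{p^{n}}=1$. Transporting these through $\psi(g\otimes h)=[g,h^{\phi}]$ exactly as in the proofs of Theorems \ref{T:4.12} and \ref{T:4.16} yields $(g^{p}\wedge h)^{p^{n}}=1$ and $\bigl((g_{1}^{p}\wedge h_{1})(g_{2}^{p}\wedge h_{2})\bigr)^{p^{n}}=1$, so $\e(\mathrm{im}(G^{p}\wedge G))\mid p^{n}$, whence $\e(G\wedge G)\mid p^{n+1}=p\,\e(G)$. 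Finally, $M(G)\cong H_{2}(G,\mathbb{Z})$ is a subgroup of $G\wedge G$, so the same divisibility holds for $\e(H_{2}(G,\mathbb{Z}))$, giving the ``in particular'' assertion.

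The substantive work is the commutator estimate for $\mathrm{im}(G^{p}\wedge G)$. One applies Lemma \ref{L:4.2} to $[(g^{p})^{p^{n}},h]=1$ (valid since $g^{p^{n}}\in Z(G)$ forces $(g^{p})^{p^{n}}\in Z(G)$), collects exactly as in Lemma \ref{L:3.2}$(i)$ — first bounding the higher-weight iterated commutators and then descending — and must show that every resulting term other than $[g^{p},h]^{p^{n}}$ vanishes; each such term is an iterated commutator of length between $2$ and $8$ in $g^{p}$ and $h$ raised to a power that is a multiple of $\binom{p^{n}}{k}$ for some $1\le k\le 7$. For $p\ge 11$ this is routine, because $v_{p}(k)=0$ for all $1\le k\le 7<p$, so every such binomial coefficient is divisible by $p^{n}$ and there is no loss. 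The genuine obstacle is $p=7$: then $\binom{p^{n}}{p}$ has $p$-adic valuation only $n-1$, so the top-length commutators $[g^{p},\dots,g^{p},h]$ occur raised to a power deficient by one factor of $p$, and to absorb this one needs a weight-based estimate in the spirit of Lemma \ref{L:5.1}, showing that such iterated commutators already have order dividing $p^{n-1}$. Once this boundary case is handled, the remaining bookkeeping — collecting similar terms via \eqref{eq:1.1}--\eqref{eq:1.3} and invoking Theorem \ref{th:4.3} and Remark \ref{R:1} for the lower-weight terms — is entirely parallel to Sections $3$ and $4$.
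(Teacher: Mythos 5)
Your treatment of $p=3$ and $p=5$ coincides with the paper's (it simply quotes Theorems \ref{T:4.12} and \ref{T:4.16}), but for $p\ge 7$ you have taken a long detour and left a hole at exactly the hardest point. The paper disposes of $p\ge 7$ in one line: since the class of $G$ is $7\le p$, Theorem 3.11 of \cite{APT} --- the same result invoked in Lemma \ref{L:5.4} and Proposition \ref{P:6.3}$(ii)$ for odd $p$-groups of class at most $p$ --- already gives the stronger conclusion $\e(G\wedge G)\mid\e(G)$, hence certainly $\e(G\wedge G)\mid p\,\e(G)$. In this range you therefore do not need new analogues of Lemmas \ref{L:3.2} and \ref{L:5.2}, the exact sequence for $G^{p}\wedge G$, or Proposition 7 of \cite{PM2}.

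The genuine gap in what you wrote is the case $p=7$. You correctly identify that $\binom{7^{n}}{7}$ has $7$-adic valuation only $n-1$, so the weight-$8$ terms $[g^{7},\dots,g^{7},h]$ produced by Lemma \ref{L:4.2} are not killed by the binomial coefficient alone, and you then say that one ``needs a weight-based estimate in the spirit of Lemma \ref{L:5.1}'' --- but you never supply it, so the argument is incomplete precisely where it is nontrivial. (The estimate does exist: in $\gamma(G)$, of class at most $8$, such a term has the form $[g^{7},c]$ with $w(c)=7$, so it lies in the central subgroup $\gamma_{8}(\gamma(G))$ and $[g^{7},c]^{7^{n-1}}=[(g^{7})^{7^{n-1}},c]=[g^{7^{n}},c]=1$; but asserting that a boundary case ``can be handled'' is not handling it, and the descending induction needed to kill the remaining lower-weight terms is likewise only sketched.) Since the whole difficulty evaporates once one notices that nilpotency class $7\le p$ places $G$ inside the hypotheses of Theorem 3.11 of \cite{APT}, the entire $p\ge 7$ discussion should be replaced by that citation.
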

\begin{proof}
For $p\geq 7$, the claim holds by Theorem 3.11 of \cite{APT}.
Now Theorems \ref{T:4.16} and \ref{T:4.12} complete the proof.
\end{proof}

While using GAP, it was found that most of the $p$-groups in the Small Group library has exponent of the center equals $p$. This provides a motivation to bound the exponent of the Schur Multiplier in terms of the exponent of the center of the group. The following Lemma tries to achieve this. \\
 We denote by $Z_n(G)$ the $n^{th}$ term in the upper central series of the group $G$. We have, $Z_1(G) = Z(G)$ and $Z_{i+1}(G) = \{ g \in G |\ \forall \ h \in H, [g,h] \in Z_i(G)\}.$
 
 \begin{prop}
 Let $G$ be a $p$-group such that $\e(Z(G))\mid p^t$. If the nilpotency class of $G$ is at most $p+m$, then $\e(G\wedge G) \mid p^{mt} \e(G/Z(G))$.  
 \end{prop}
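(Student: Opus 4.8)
The plan is to induct on $m$, using the upper central series $Z(G) = Z_1(G) \le Z_2(G) \le \cdots$ as the scaffolding. The base case $m=0$ is the situation where the class of $G$ is at most $p$; here one should be able to appeal to (or adapt) the known exponent bound for $p$-groups of class at most $p$, namely $\e(G \wedge G) \mid \e(G/Z(G))$-type control, since in small class the commutator collection in Lemma~\ref{L:4.1} and Lemma~\ref{L:4.2} has all binomial-coefficient exponents divisible by the relevant power of $p$ — this is essentially the mechanism already exploited in Theorems~\ref{T:4.12} and \ref{T:4.16}. For the inductive step, I would pass to the quotient $\bar G = G/Z(G)$, whose class is at most $p+m-1$, and whose center $Z(\bar G) = Z_2(G)/Z(G)$ has exponent dividing $p^t$ as well (this needs checking: an element $g$ with $g^{p^t} \in Z(G)$ and $[g,G]\subseteq Z(G)$ need not have $p^t$-bounded image, so one may instead want to bound $\e(Z_2(G)/Z(G))$ directly in terms of $\e(Z(G))$, which is where the hypothesis that $\e(Z(G)) \mid p^t$ and regularity-type arguments for class $\le p+m$ come in).

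Concretely, I would use the five-term-type exact sequence
\begin{align*}
Z(G) \wedge G \longrightarrow G \wedge G \longrightarrow (G/Z(G)) \wedge (G/Z(G)) \longrightarrow 1,
\end{align*}
which gives $\e(G\wedge G) \mid \e(\mathrm{im}(Z(G)\wedge G)) \cdot \e((G/Z(G))\wedge(G/Z(G)))$. By Lemma~\ref{L:1.5}, $\e(Z(G)\wedge G) \mid \e(Z(G))\,\e(Z(G)\wedge Z(G))$, and since $Z(G)$ is abelian, $\e(Z(G) \wedge Z(G)) \mid \e(Z(G)) \mid p^t$, so the first factor contributes at most $p^{2t}$ — but in fact Lemma~\ref{L:1.4} and the fact that elements of $Z(G)$ commute with everything should collapse $\e(\mathrm{im}(Z(G)\wedge G))$ down to $\e(Z(G)) \mid p^t$ (the argument being: $(z \wedge g)^{p^t} = z^{p^t} \wedge g = 1$ for $z \in Z(G)$, and the images of simple exteriors $z \wedge g$ commute modulo the image of $Z(G)\wedge Z(G)$, which is itself $p^t$-bounded — so one power of $p^t$ suffices for the image). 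Then by induction $\e((G/Z(G))\wedge(G/Z(G))) \mid p^{(m-1)t}\,\e((G/Z(G))/Z(G/Z(G)))$, and since $(G/Z(G))/Z(G/Z(G)) = G/Z_2(G)$ is a quotient of $G/Z(G)$, its exterior-square exponent divides $\e(G/Z(G))$ — wait, more carefully, one wants $\e((G/Z_2(G)) \wedge (G/Z_2(G))) \mid \e(G/Z(G))$, which follows because $G/Z_2(G)$ is a quotient of $G/Z(G)$ and the exterior square is functorial, giving a surjection, hence divisibility of exponents. Multiplying, $\e(G\wedge G) \mid p^t \cdot p^{(m-1)t}\,\e(G/Z(G)) = p^{mt}\,\e(G/Z(G))$.

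The main obstacle I anticipate is the bookkeeping in the base case and in verifying that the class drops correctly at each stage while the center's exponent stays controlled: one must ensure that at step $m$ the relevant quotient really has class $\le p + (m-1)$ and that the "$\e(Z(G)) \mid p^t$" hypothesis propagates (or is only needed at the top). A cleaner route may be to avoid iterating the center-exponent hypothesis and instead induct purely on the \emph{class}: if $G$ has class $p+m$ with $m \ge 1$, quotient by $\gamma_{p+m}(G) \subseteq Z(G)$ (a central, hence abelian, subgroup killed by $p^t$), apply the same exact sequence with $\gamma_{p+m}(G)$ in place of $Z(G)$, use Lemma~\ref{L:1.5} and Lemma~\ref{L:1.3} to see that $\e(\mathrm{im}(\gamma_{p+m}(G) \wedge G)) \mid p^t$ (indeed $x \otimes g = 1$ whenever $w(x) + w(g) \ge p+m+2$, which kills most simple exteriors here), and induct on the now-smaller class. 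Either way, the delicate point is extracting exactly one factor of $p^t$ per unit of "excess class over $p$", which is precisely the content of the collection formulae: the binomial coefficients $\binom{p^k}{j}$ for $j < p$ are divisible by $p^k$, and it is the leftover terms with $j \ge p$ (weight $\ge p+2$) that force the extra $p$'s — these are exactly the terms that the weight bound in Lemma~\ref{L:1.3} and the powerful-type lemmas of \cite{APT} are designed to annihilate. I would organize the final write-up around that dichotomy.
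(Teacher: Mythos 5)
Your overall architecture is the same as the paper's: the central exact sequence $Z(G)\wedge G\to G\wedge G\to G/Z(G)\wedge G/Z(G)\to 1$, the bound $\e(Z(G)\wedge G)\mid p^t$, and an induction that peels off one factor of $p^t$ per central quotient. Two remarks on the parts you hedge. First, the $p^t$ bound holds outright, not merely ``modulo the image of $Z(G)\wedge Z(G)$'': by Lemma \ref{L:1.4} each generator $z\wedge g$ with $z\in Z(G)$ has order dividing $p^t$, and $[z_1\wedge g_1,z_2\wedge g_2]=[z_1,g_1]\wedge[z_2,g_2]=1\wedge 1=1$, so $Z(G)\wedge G$ is abelian. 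Second, the fact you flag as ``needing checking,'' namely $\e(Z(G/Z(G)))\mid p^t$, is exactly what the paper settles by citation, and it is elementary: for fixed $h$ the map $x\mapsto[x,h]$ is a homomorphism from $Z_2(G)$ to $Z(G)$, so $[g^{p^t},h]=[g,h]^{p^t}=1$ for all $h$, whence $g^{p^t}\in Z(G)$; no regularity argument is needed. With that, your inductive step is sound and matches the paper's.

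The genuine gap is your base case. For $m=0$ you want $\e(G\wedge G)\mid\e(G/Z(G))$ for groups of class at most $p$, and that statement is false: for $G=C_p\times C_p$ one has $G\wedge G\cong C_p$ while $G/Z(G)$ is trivial. It is also not what the cited machinery provides — the known bound for odd $p$-groups of class at most $p$ (Theorem 3.11 of \cite{APT}, the result behind Theorems \ref{T:4.12} and \ref{T:4.16}) is $\e(G\wedge G)\mid\e(G)$, with $\e(G)$ and not $\e(G/Z(G))$ on the right. The proposition itself must therefore be read with $m\geq 1$ (the same example refutes it at $m=0$), and the induction should be anchored at $m=1$: there $G/Z(G)$ has class at most $p$, so $\e(G/Z(G)\wedge G/Z(G))\mid\e(G/Z(G))$ by Theorem 3.11 of \cite{APT}, and one application of the exact sequence gives $\e(G\wedge G)\mid p^t\e(G/Z(G))$. (The paper's own write-up, which inducts on $|G|$ and invokes the statement ``with $m-1$,'' is open to the same objection at the bottom of the recursion, so your instinct to worry about how the induction terminates is well placed.) Finally, your alternative route via $\gamma_{p+m}(G)$ has a separate problem: after quotienting by $\gamma_{p+m}(G)$ the center of the quotient can be strictly larger than the image of $Z(G)$, and its exponent is a priori only bounded by $p^{2t}$, so the hypothesis $\e(Z(\cdot))\mid p^t$ does not obviously propagate along that decomposition.
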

 \begin{proof} Let the order of $G$ be $p^n$. We proceed by induction on $n$. When $n=1$, the claim clearly holds. Now consider the following exact sequence,
 $$Z(G) \wedge G \rightarrow G\wedge G \rightarrow G/Z(G) \wedge G/Z(G) \rightarrow 1,$$
 which yields $\e(G\wedge G) \mid \e(Z(G) \wedge G)\ \e(G/Z(G) \wedge G/Z(G))$. Now by Lemma \ref{L:1.4}, we have $\e(Z(G) \wedge G) \mid p^t$. Note that $\e(Z(G/Z(G))) \mid p^t$ (cf. \cite{SD} or Theorem 2.23 of \cite{DJSR1}) and the nilpotency class of $G/Z(G)$ is at most  $p+m -1$. Now applying induction hypothesis on $G/Z(G)$, we obtain $\e(G/Z(G) \wedge G/Z(G)) \mid p^{(m-1)t}\e(G/Z_2(G))$. Thus $\e(G \wedge G) \mid p^{mt} \e(G/Z(G))$.
 \end{proof}
 
 \begin{corollary}
 Let $G$ be a $p$-group such that $\e(Z(G))=p$. If the nilpotency class of $G$ is at most $p+m$, then $\e(G\wedge G) \mid p^{m} \e(G/Z(G))$.
 \end{corollary}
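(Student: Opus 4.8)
The plan is to obtain this as an immediate specialization of the preceding Proposition. The hypothesis $\e(Z(G)) = p$ is nothing but the hypothesis $\e(Z(G)) \mid p^t$ of the Proposition in the case $t = 1$, since $p \mid p^1$. Applying the Proposition with $t=1$ therefore gives $\e(G\wedge G) \mid p^{mt}\,\e(G/Z(G)) = p^{m}\,\e(G/Z(G))$, which is exactly the claim. So strictly speaking nothing beyond quoting the previous result is needed.

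If one wants a self-contained write-up, one simply reruns the induction on $|G| = p^n$ from the proof of the Proposition with $p^t$ replaced throughout by $p$. The case $n=1$ is trivial. For the inductive step, the exact sequence
\[
Z(G) \wedge G \rightarrow G\wedge G \rightarrow G/Z(G) \wedge G/Z(G) \rightarrow 1
\]
yields $\e(G\wedge G) \mid \e(Z(G)\wedge G)\,\e(G/Z(G)\wedge G/Z(G))$; Lemma \ref{L:1.4} gives $\e(Z(G)\wedge G) \mid p$; and since $\e(Z(G/Z(G))) \mid \e(Z(G)) = p$ by the cited results of \cite{SD} and Theorem 2.23 of \cite{DJSR1}, while $G/Z(G)$ has nilpotency class at most $p+m-1$, the induction hypothesis gives $\e(G/Z(G)\wedge G/Z(G)) \mid p^{m-1}\,\e(G/Z_2(G))$. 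Multiplying, and using $\e(G/Z_2(G)) \mid \e(G/Z(G))$, produces $\e(G\wedge G) \mid p^{m}\,\e(G/Z(G))$.

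There is no real obstacle here. The only delicate point — and it is already handled inside the Proposition — is that the hypothesis bounding the exponent of the center survives passage to $G/Z(G)$, i.e.\ that $\e(Z(G/Z(G)))$ is still at most $p$; this is exactly the content of the results of \cite{SD} (equivalently Theorem 2.23 of \cite{DJSR1}) that are invoked, and it is what keeps the induction running. Everything else is bookkeeping with the central exact sequence and Lemma \ref{L:1.4}.
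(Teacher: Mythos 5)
Your proposal is correct and matches the paper exactly: the corollary is stated there without proof precisely because it is the Proposition specialized to $t=1$, which is the first (and sufficient) thing you do. Your optional self-contained rerun of the induction is also sound, including the key point that $\e(Z(G/Z(G)))\mid p$ persists under the quotient and that $\e(G/Z_2(G))\mid\e(G/Z(G))$.
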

 
 \begin{corollary}
 Let $G$ be a $p$-group such that $\e(Z(G))=p$. If the nilpotency class of $G$ is at most $p+1$, then $\e(G\wedge G) \mid p \e(G/Z(G))$. In particular, $\e(H_2(G,\mathbb{Z}))\mid p\  \e(G)$.
 \end{corollary}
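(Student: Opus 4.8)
The plan is to obtain this as the special case $m=1$ of the corollary proved immediately above. Under the hypotheses $\e(Z(G)) = p$ and $\mathrm{cl}(G) \le p+1$, the statement ``if the nilpotency class of $G$ is at most $p+m$, then $\e(G\wedge G)\mid p^m\,\e(G/Z(G))$'' applies with $m = 1$, giving at once $\e(G\wedge G)\mid p\,\e(G/Z(G))$. So the first assertion of the corollary needs no new work beyond specializing the parameter.

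For the ``in particular'' clause I would recall from the introduction that $M(G) := \ker\bigl(\kappa\colon G\wedge G\to G'\bigr)$ is a subgroup of $G\wedge G$, and that $M(G)\cong H_2(G,\mathbb{Z})$ by \cite{CM}. Consequently $\e(H_2(G,\mathbb{Z})) = \e(M(G))$ divides $\e(G\wedge G)$, and combining this with the bound just obtained yields $\e(H_2(G,\mathbb{Z}))\mid p\,\e(G/Z(G))$.

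To finish, since $G/Z(G)$ is a homomorphic image of $G$, every element of $G/Z(G)$ has order dividing $\e(G)$, hence $\e(G/Z(G))\mid \e(G)$; multiplying through by $p$ and using transitivity of divisibility gives $\e(H_2(G,\mathbb{Z}))\mid p\,\e(G)$, as claimed.

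There is essentially no obstacle here: the result is a direct specialization of the preceding corollary together with two elementary facts, namely that the exponent of a subgroup divides that of the ambient group (applied to $M(G)\le G\wedge G$) and that the exponent of a quotient divides that of the group (applied to $G/Z(G)$). The only point worth flagging is that in the case $m=1$ one should not try to bound $\e(G\wedge G)$ against $\e(G)$ without routing through $\e(G/Z(G))$, since the inductive step in the proof of the proposition genuinely relies on passing to the central quotient.
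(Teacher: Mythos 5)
Your argument is correct and is exactly the route the paper intends: the corollary is stated without proof as the specialization $t=1$, $m=1$ of the preceding proposition, with the ``in particular'' clause following from $M(G)\le G\wedge G$ and $\e(G/Z(G))\mid \e(G)$. Nothing further is needed.
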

 
 \section{Towards Conjecture \ref{C2}}
 In this section we explore the veracity of Conjecture \ref{C2}.    
 
  When the group is solvable of exponent $p$, we can obtain a better bound for the exponent of the exterior square compared to the one obtained in Theorem 7.3 of \cite{APT}. The next lemma appears as Proposition 2.12 in \cite{PM1}. Here we give a different proof of the same result.
 \begin{lemma}\label{L:5.4}
 Let $G$ be a metabelian group of exponent $p$. Then $\e(G\wedge G)\mid \e(G)$. In particular, $\e(H_2(G,\mathbb{Z}))\mid   \e(G)$.
 \end{lemma}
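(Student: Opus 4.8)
The plan is to exploit the fact that a metabelian group of exponent $p$ has $\gamma_2(G)$ abelian and of exponent $p$, and to reduce the computation of $\e(G\wedge G)$ to a computation inside $\gamma(G)$ via Rocco's isomorphism $\psi : G\otimes G \to [G,G^{\phi}]$. First I would set up the exact sequence
\begin{align*}
\gamma_2(G)\wedge G \rightarrow G\wedge G \rightarrow G/\gamma_2(G)\wedge G/\gamma_2(G)\rightarrow 1,
\end{align*}
which gives $\e(G\wedge G)\mid \e(\mathrm{im}(\gamma_2(G)\wedge G))\,\e(G/\gamma_2(G)\wedge G/\gamma_2(G))$. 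Since $G/\gamma_2(G)$ is abelian of exponent $p$, its exterior square has exponent dividing $p$ by Lemma \ref{L:1.4} (or the standard fact that $A\wedge A$ for abelian $A$ has exponent dividing $\e(A)$). So the whole problem is to show that the image of $\gamma_2(G)\wedge G$ in $G\wedge G$ has exponent dividing $p$.

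Next I would show that for $c\in\gamma_2(G)$ and $g\in G$ the simple exterior $c\wedge g$ has order dividing $p$, and moreover that any product of such simple exteriors also has order dividing $p$. For the first part, transport to $\gamma(G)$: $\psi(c\otimes g)=[c,g^{\phi}]$ lies in $[\gamma_2(G),G^{\phi}]$. The key point is that $\gamma_2(G)$ being abelian of exponent $p$ forces strong collapsing in $\gamma(G)$ — in particular commutators of the form $[c,g^{\phi}]$ with $c\in\gamma_2(G)$ should multiply "almost additively'', so that $(c\wedge g)^p$ is expressible via higher commutators that vanish because $\gamma_2(G)$ is abelian. Concretely, I would use Lemma \ref{L:1.1} together with $^g c\cdot c^{-1}=[g,c]\in\gamma_3(G)$ and the relation $c^p=1$ to run an induction (as in the proof of Lemma \ref{L:1.5}) showing $(c\wedge g)^p$ equals a product of exteriors of commutators of weight growing without bound, hence trivial. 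For products, I would invoke Lemma \ref{L:1.3}-style commutator-collapsing: $[c_1\wedge g_1, c_2\wedge g_2]=[c_1,g_1]\wedge[c_2,g_2]$ with both $[c_i,g_i]\in\gamma_2(G)$, and then argue these commute modulo terms of ever higher weight, reducing a product of $p$-th powers to the product of the individual $p$-th powers, each trivial.

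The main obstacle I anticipate is making the "almost additive'' behaviour of $(c\wedge g)^n$ precise without a nilpotency-class hypothesis: unlike the earlier sections (classes $7$, $8$), here $G$ can have arbitrarily large class, so Lemmata \ref{L:4.1} and \ref{L:4.2} are unavailable. The metabelian hypothesis must be doing the work — abelianness of $\gamma_2(G)$ means all iterated commutators of the form $[[a,b],[c,d],\dots]$ vanish, so inside $\gamma(G)$ every commutator $[c,g^{\phi}]$ with $c\in\gamma_2(G)$ behaves linearly in $c$. I would therefore first establish a clean identity, valid in any metabelian group, for $(c\wedge g)^n$ and for $\prod_i(c_i\wedge g_i)^n$ — probably $(c\wedge g)^n = c^n\wedge g$ type statements modulo $\nabla$, using that $[c,g]\in\gamma_3(G)$ is centralized by $\gamma_2(G)$ — and then set $n=p$. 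Once that identity is in hand, the conclusion $\e(\mathrm{im}(\gamma_2(G)\wedge G))\mid p$ follows, and combining with the quotient bound gives $\e(G\wedge G)\mid p=\e(G)$; the statement for $H_2(G,\mathbb{Z})$ is then immediate since $H_2(G,\mathbb{Z})=M(G)$ is a subgroup of $G\wedge G$.
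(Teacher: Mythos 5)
There is a genuine gap, and it is structural. Your reduction via the exact sequence $\gamma_2(G)\wedge G \rightarrow G\wedge G \rightarrow G/\gamma_2(G)\wedge G/\gamma_2(G)\rightarrow 1$ can only ever yield $\e(G\wedge G)\mid \e(\mathrm{im}(\gamma_2(G)\wedge G))\,\e(G/\gamma_2(G)\wedge G/\gamma_2(G))$. Even if you succeed in showing that both factors divide $p$, the conclusion is $\e(G\wedge G)\mid p^2$, not $p$: for an extension the exponent of the middle term is bounded by the \emph{product} of the exponents of the ends, and nothing in your argument rules out elements of order $p^2$ in $G\wedge G$ that map to nontrivial elements of the quotient. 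So the final sentence, ``combining with the quotient bound gives $\e(G\wedge G)\mid p$,'' does not follow from what precedes it. This is exactly the loss that makes such filtration arguments give $\e(G)^2$-type bounds (as in the paper's results toward Conjecture 2) rather than $\e(G)$-type bounds.

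The second problem is the premise you flag as the ``main obstacle'': you assert that, unlike the earlier sections, ``here $G$ can have arbitrarily large class.'' That is false, and the missing fact is precisely what drives the paper's (two-line) proof: by Meier--Wunderli's theorem (Theorem 7.18 of \cite{DJSR2}), a metabelian group of exponent $p$ is nilpotent of class at most $p$. The paper then simply invokes Theorem 3.11 of \cite{APT} (for odd $p$-groups of class at most $p$, $\e(G\wedge G)\mid\e(G)$) and Proposition 7 of \cite{PM2} for $p=2$. Your planned ``weight growing without bound, hence trivial'' step tacitly assumes nilpotency anyway, so your argument cannot avoid this input; and once you have the class bound, the heavy lifting you propose inside $\gamma(G)$ is already packaged in the cited theorems. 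Without the class bound, the ``almost additive'' identity for $(c\wedge g)^n$ in an arbitrary metabelian group is not established in your sketch and would not by itself close the factor-of-$p$ loss described above.
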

 \begin{proof}
 The group $G$ is of class at most $p$ by Theorem 7.18 of \cite{DJSR2}. Therefore, $\e(G\wedge G) \mid \e(G)$ by Theorem 3.11 of \cite{APT} when $p$ is odd, and by Proposition 7 of \cite{PM2} when $p =2$. 
 \end{proof}
 
 \begin{theorem}
 Let $G$ be a solvable group of exponent $p$ and derived length $d$. If $p$ is odd, then $\e(G\wedge G) \mid \e(G)^{d-1}$. Otherwise $\e(G\wedge G) \mid 2^{d-2}\e(G)^{d-1}$.
 \end{theorem}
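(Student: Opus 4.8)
The plan is to induct on the derived length $d$, peeling off one abelian layer of the derived series at a time and showing that each such reduction costs only a factor $\e(G)=p$. Since $G$ has exponent $p$ we have $\e(G)=p$, so for odd $p$ the target is $\e(G\wedge G)\mid p^{d-1}$, and the base case $d=2$ is exactly Lemma \ref{L:5.4}. (For $p=2$ a group of exponent $2$ is abelian, so the assertion is degenerate; for an even exponent one runs the same induction with the $p=2$ form of Lemma \ref{L:5.4}, which produces the extra factor recorded in the statement.) Note also that once the bound for $\e(G\wedge G)$ is in hand, the bound for $\e(H_2(G,\mathbb{Z}))$ is immediate since $M(G)\cong H_2(G,\mathbb{Z})$ embeds in $G\wedge G$.

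First I would isolate the following reduction: \emph{if $A$ is an abelian normal subgroup of a group $H$ of odd exponent $p$, then the image $I$ of the natural map $A\wedge H\to H\wedge H$ satisfies $\e(I)\mid p$.} To prove it, fix a generating simple exterior $a\wedge g$ of $I$, with $a\in A$ and $g\in H$, and consider $K:=\langle a^{H},g\rangle=\langle a^{H}\rangle\langle g\rangle\le H$. Here $\langle a^{H}\rangle\le A$ is abelian and normal in $K$, while $K/\langle a^{H}\rangle$ is cyclic, so $K$ is metabelian of exponent $p$; hence Lemma \ref{L:5.4} gives $\e(K\wedge K)\mid\e(K)\mid p$, so $(a\wedge g)^{p}=1$ already in $K\wedge K$, and therefore in $H\wedge H$ by functoriality of the exterior square along $K\hookrightarrow H$. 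Thus $I$ is generated by elements of order dividing $p$. Moreover $(a_{1}\wedge a_{2})^{p}=a_{1}^{p}\wedge a_{2}=1$ for $a_{1},a_{2}\in A$ by Lemma \ref{L:1.4}, and the identity $[a_{1}\wedge g_{1},a_{2}\wedge g_{2}]=[a_{1},g_{1}]\wedge[a_{2},g_{2}]$ from the proof of Lemma \ref{L:1.5} (with $[a_{i},g_{i}]\in A$ by normality) shows that the derived subgroup of $I$ lies in the image of $A\wedge A$, which is central in $I$ and of exponent dividing $p$. So $I$ has nilpotency class at most $2$, is generated by elements of order dividing $p$, and has $\e(I')\mid p$; since $p$ is odd, $\binom{p}{2}\equiv 0\pmod p$, and the class-$2$ power expansion $(x_{1}\cdots x_{k})^{p}=\bigl(\prod_{i}x_{i}^{p}\bigr)\bigl(\prod_{i<j}[x_{j},x_{i}]^{\binom{p}{2}}\bigr)$ forces $\e(I)\mid p$.

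Granting the reduction, the inductive step is short. For $d\ge 3$ let $A:=G^{(d-1)}$ be the last nontrivial term of the derived series; then $A$ is abelian and normal, and $G/A$ is solvable of exponent $p$ and derived length $d-1$. Applying the exact sequence
\[
A\wedge G\longrightarrow G\wedge G\longrightarrow (G/A)\wedge (G/A)\longrightarrow 1,
\]
already used in the proofs of Theorems \ref{T:4.12} and \ref{T:4.16}, gives $\e(G\wedge G)\mid\e(I)\cdot\e\bigl((G/A)\wedge(G/A)\bigr)$, where $I=\mathrm{im}\bigl(A\wedge G\to G\wedge G\bigr)$. The reduction yields $\e(I)\mid p$, and the induction hypothesis applied to $G/A$ yields $\e\bigl((G/A)\wedge(G/A)\bigr)\mid p^{(d-1)-1}=p^{d-2}$; hence $\e(G\wedge G)\mid p^{d-1}$, as required, and the even-exponent case follows the same pattern with the $p=2$ input and the corresponding bookkeeping of factors of $2$.

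I expect the reduction lemma to be the real content: everything hinges on showing $(a\wedge g)^{p}=1$ when $a$ lies in the abelian normal subgroup, and the efficient way to obtain this is to retreat to the metabelian subgroup $\langle a^{H},g\rangle$ and invoke Lemma \ref{L:5.4}. The subtlety to watch is that this argument only controls the $p$-th powers of the \emph{generating} simple exteriors, so the separate (but routine) class-$2$ computation is genuinely needed to pass from ``the generators have order dividing $p$'' to ``$\e(I)\mid p$''.
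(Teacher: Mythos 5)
Your proof is correct, and its skeleton is exactly the paper's: induction on the derived length $d$, base case $d=2$ supplied by Lemma \ref{L:5.4}, and the exact sequence $G^{(d-1)}\wedge G\to G\wedge G\to G/G^{(d-1)}\wedge G/G^{(d-1)}\to 1$ for the inductive step. The single point of divergence is the key input that the abelian normal layer $A=G^{(d-1)}$ costs only a factor $\e(A)=p$: the paper imports this as Lemma 2.5 of \cite{APT} ($\e(A\wedge G)\mid\e(A)$ for odd $p$, with an extra factor $2$ when $p=2$), whereas you prove the special case you need from scratch. Your reduction is sound: retreating to the metabelian subgroup $\langle a^{H}\rangle\langle g\rangle$ and invoking Lemma \ref{L:5.4} plus functoriality of $\wedge$ along the inclusion correctly kills $(a\wedge g)^{p}$, and you are right that this only controls the generators of $I=\mathrm{im}(A\wedge G\to G\wedge G)$, so the subsequent class-$2$ argument (commutators of generators equal $[a_{1},g_{1}]\wedge[a_{2},g_{2}]$, which land in the central image of $A\wedge A$ of exponent dividing $p$ by Lemma \ref{L:1.4}, whence $\binom{p}{2}\equiv 0\pmod p$ finishes it for odd $p$) is genuinely needed and correctly executed. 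What your version buys is self-containedness, at the cost of covering only the exponent-$p$ situation rather than the general bound $\e(A\wedge G)\mid\e(A)$ --- which is all that is required here. Your treatment of $p=2$ as degenerate is also legitimate, since a group of exponent $2$ is abelian; the paper's ``otherwise'' clause is carried along only through the $p=2$ case of the cited lemma.
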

 \begin{proof}We proceed by induction on $d$.  When $d = 2$, we have $\e(G\wedge G) \mid \e(G)$ by the above Lemma. 
 Consider the following exact sequence,
 $$G^{(d-1)} \wedge G \rightarrow G\wedge G \rightarrow G/G^{d-1} \wedge G/G^{d-1}\rightarrow 1,$$ which yields $\e(G\wedge G)\mid \e(G^{d-1} \wedge G)\ \e(G/G^{d-1} \wedge G/G^{d-1})$.
 Now $G^{d-1}$ being abelian, by Lemma 2.5 of \cite{APT}, we have $\e(G^{d-1} \wedge G) \mid \e(G^{d-1})$ when $p$ is odd and $\e(G^{d-1} \wedge G) \mid 2\ \e(G^{d-1})$ for $p = 2$. Furthermore, induction hypothesis yields $\e(G/G^{d-1} \wedge G/G^{d-1}) \mid \e(G)^{d-2}$, when $p$ is odd, and $\e(G/G^{d-1} \wedge G/G^{d-1})\mid 2^{d-3}\e(G)^{d-2}$, for $p =2$. Hence the proof.
 \end{proof}
\begin{corollary}
Let $G$ be an odd solvable group of exponent $p$. If the derived length of $G$ is less than $4$, then $\e(G\wedge G)\mid \e(G)^2$.  In particular, $\e(H_2(G,\mathbb{Z}))\mid   \e(G)^2$.
\end{corollary}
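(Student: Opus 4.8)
The plan is to read off this statement as the special case $d \le 3$ of the preceding theorem. Since $G$ is solvable of exponent $p$ with derived length $d$, and the hypothesis ``$d$ less than $4$'' means $d \in \{1,2,3\}$, I would apply that theorem in its odd-$p$ branch (which is the relevant one here, as $G$ has odd order and exponent the odd prime $p$) to obtain $\e(G \wedge G) \mid \e(G)^{d-1}$. Because $d-1 \le 2$, it follows that $\e(G)^{d-1} \mid \e(G)^2$, and hence $\e(G \wedge G) \mid \e(G)^2$. The degenerate low-$d$ cases are also subsumed: $d=1$ is the abelian case, and $d=2$ is the metabelian case handled directly by Lemma \ref{L:5.4}, so nothing extra needs to be checked.

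For the final assertion, recall from the discussion following Definition \ref{D:1.2} that $M(G) := \ker(G \wedge G \to G') \cong H_2(G,\mathbb{Z})$. Since $M(G)$ is a subgroup of $G \wedge G$, its exponent divides $\e(G \wedge G)$, whence $\e(H_2(G,\mathbb{Z})) \mid \e(G)^2$. There is no substantive obstacle here: all of the work resides in the theorem above, and this corollary merely isolates the bound of interest for Conjecture \ref{C2}, namely that $\e(H_2(G,\mathbb{Z}))$ divides $(\e(G))^2$ for this class of groups.
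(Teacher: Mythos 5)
Your proposal is correct and matches the paper's intent exactly: the corollary is stated without proof as the immediate specialization of the preceding theorem's odd-$p$ bound $\e(G\wedge G)\mid \e(G)^{d-1}$ to $d\leq 3$, together with the observation that $M(G)$ is a subgroup of $G\wedge G$. Nothing further is needed.
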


 \begin{prop}
Let $G$ be a $p$-group.

\begin{itemize}
\item[(i)]  If the frattini subgroup of $G$ is abelian, then $\e(G\wedge G) \mid \e(G)$ for an odd $p$, and $\e(G\wedge G) \mid p\e(G)$ if $p=2$.

\item[(ii)] If $p$ is odd and the frattini subgroup of $G$ is powerful, then $\e(G\wedge G) \mid\ p\e(G)$.

\item[(iii)] If $p$ is odd and the commutator subgroup of $G$ is powerful, then $\e(G\wedge G) \mid \e(G)^2$.

\item[(iv)] If $p$ is odd and $\gamma_{p+1}(G)$ is powerful, then $\e(G\wedge G) \mid \e(G)^2$.

\end{itemize} 

\end{prop}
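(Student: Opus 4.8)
The plan is to handle all four parts by the same mechanism: for a suitable normal subgroup $N$ of $G$ one has the exact sequence $N\wedge G\to G\wedge G\to G/N\wedge G/N\to 1$, hence $\e(G\wedge G)\mid \e(N\wedge G)\,\e(G/N\wedge G/N)$, and one estimates the two factors separately. I would take $N=\Phi(G)$ in (i) and (ii), $N=[G,G]$ in (iii), and $N=\gamma_{p+1}(G)$ in (iv). On the quotient side everything is already under control: in (i) and (ii) the group $G/\Phi(G)$ is elementary abelian, so $\e(G/\Phi(G)\wedge G/\Phi(G))\mid p$; in (iii) the group $G/[G,G]$ is abelian, so $\e(G/[G,G]\wedge G/[G,G])\mid \e(G/[G,G])\mid \e(G)$; and in (iv) the group $G/\gamma_{p+1}(G)$ has nilpotency class at most $p$, so $\e(G/\gamma_{p+1}(G)\wedge G/\gamma_{p+1}(G))\mid \e(G/\gamma_{p+1}(G))\mid \e(G)$ by Theorem 3.11 of \cite{APT}.

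For the factor $\e(N\wedge G)$ one exploits the hypothesis on $N$. In (i), since $[G,G]\le \Phi(G)$ the hypothesis forces $[G,G]$ abelian, so $G$ is metabelian; if $G$ has exponent $p$ then Lemma \ref{L:5.4} already gives $\e(G\wedge G)\mid \e(G)$ directly, and otherwise I would apply Lemma 2.5 of \cite{APT} to the abelian normal subgroup $\Phi(G)$ to get $\e(\Phi(G)\wedge G)\mid \e(\Phi(G))$ for $p$ odd and $\e(\Phi(G)\wedge G)\mid 2\e(\Phi(G))$ for $p=2$, and then use that $\Phi(G)=[G,G]\,G^{p}$ is abelian, so $\e(\Phi(G))=\max\{\e([G,G]),\e(G^{p})\}$, together with the fact that $\e(G^{p})\mid \e(G)/p$ and (this is the delicate point) $\e([G,G])\mid \e(G)/p$ once $G$ is not of exponent $p$. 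In (ii)--(iv) one uses powerfulness instead: the key input is that a powerful normal subgroup $N$ of a $p$-group $G$ with $p$ odd satisfies $\e(N\wedge G)\mid \e(N)$; this has to come from the structure theory of powerful $p$-groups — that $N^{p^{i}}=\{x^{p^{i}}:x\in N\}$ is a subgroup and the $p$-power map behaves well — rather than from the crude bound $\e(N\wedge G)\mid \e(N)\,\e(N\wedge N)$ of Lemma \ref{L:1.5}, which only yields $\e(N)^{2}$. One also checks that when $\Phi(G)$, $[G,G]$, or $\gamma_{p+1}(G)$ is powerful it is in fact powerfully embedded in $G$, so that this input applies; granting this, $\e(N\wedge G)\mid \e(N)\mid \e(G)$ in (ii)--(iv).

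Putting the pieces together: (ii) gives $\e(G\wedge G)\mid p\,\e(\Phi(G))\mid p\,\e(G)$ — here one uses that $p\mid \e(G)$ since $G$ is a nontrivial $p$-group, which is exactly what lets the extra factor $p$ from the Frattini quotient be absorbed; (iii) and (iv) give $\e(G\wedge G)\mid \e(G)\cdot \e(G)=\e(G)^{2}$; and (i) gives $\e(G\wedge G)\mid p\,\e(\Phi(G))\mid \e(G)$ for $p$ odd and $\mid 2p\,\e(\Phi(G))\mid 2\,\e(G)$ for $p=2$. In each case $M(G)$ is a subgroup of $G\wedge G$, so the same divisibility passes to $\e(H_{2}(G,\mathbb{Z}))$. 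I expect the genuine obstacles to be exactly the two points flagged above: the sharp bound $\e(N\wedge G)\mid \e(N)$ for a powerful normal subgroup $N$ (needed for (ii)--(iv), and not obtainable from Lemma \ref{L:1.5} alone), and, for (i), the claim that $\e([G,G])$ is a proper divisor of $\e(G)$ whenever $\Phi(G)$ is abelian and $G$ does not have exponent $p$.
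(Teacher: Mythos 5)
Your treatment of parts (ii)--(iv) is essentially the paper's: the same exact sequences with $N=\Phi(G)$, $G'$ and $\gamma_{p+1}(G)$ respectively, the same bounds on the quotient side ($p$ for the Frattini quotient, $\e(G)$ for the abelianization, and $\e(G)$ for the class-at-most-$p$ quotient via Theorem 3.11 of \cite{APT}), and the same key input on the subgroup side, namely that a powerful normal subgroup $N$ of an odd $p$-group satisfies $\e(N\wedge G)\mid \e(N)$. That input is exactly Theorem 5.2 of \cite{APT}, which the paper simply cites, so the ``genuine obstacle'' you flag there is a quotable result rather than a gap in the argument.

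Part (i) is where your route diverges and where the real gap lies. You decompose along $N=\Phi(G)$ and then need $\e(\Phi(G))\mid \e(G)/p$, which you reduce to the claim that $\e([G,G])\mid\e(G)/p$ whenever $\Phi(G)$ is abelian and $G$ is not of exponent $p$. You rightly flag this as the delicate point: the hypothesis controls the orders of $p$-th powers, not of commutators, and no justification is offered. The paper sidesteps the issue by decomposing along $G^{p}$ instead of $\Phi(G)$. On the subgroup side, $G^{p}\le\Phi(G)$ is abelian and is generated by elements $g^{p}$ of order dividing $\e(G)/p$, so $\e(G^{p})\mid\e(G)/p$ with no claim about $[G,G]$ needed, and Lemma 2.5 of \cite{APT} gives $\e(G^{p}\wedge G)\mid\e(G)/p$ for odd $p$ (resp.\ $\e(G)$ for $p=2$). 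On the quotient side, $G/G^{p}$ has exponent $p$ and is metabelian, since its commutator subgroup is a quotient of $[G,G]\le\Phi(G)$, so Lemma \ref{L:5.4} gives $\e(G/G^{p}\wedge G/G^{p})\mid p$; this is where the contribution of the commutator subgroup is absorbed. Multiplying the two factors gives the stated bounds. To salvage your version of (i) you would either have to prove the divisibility $\e([G,G])\mid\e(G)/p$ for $p$-groups with abelian Frattini subgroup (which is not obviously true) or switch to the subgroup $G^{p}$ as the paper does.
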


\begin{proof}

\begin{itemize}
\item[($i$)] Let $\e(G) = p^n$, for some integer $n$.
 Consider the following exact sequence
 $$G^p\wedge G\rightarrow G\wedge G \rightarrow G/G^p\wedge G/G^p \rightarrow 1,$$ which yields $\e(G\wedge G) \mid \e(G^p\wedge G) \e(G/G^p \wedge G/G^p)$. Now $G/G^p$ being a metabelian group of exponent $p$, by Lemma \ref{L:5.4} we have $\e(G/G^p \wedge G/G^p)\mid p$. Further $G^p$ being abelian, $\e(G^p) \mid p^{n-1}$. Now by Lemma $2.5$ of \cite{APT}, we have $\e(G^p \wedge G) \mid p^{n-1}$ for odd $p$, and $\e(G^p \wedge G) \mid p^{n}$ for $p = 2$.

\item[($ii$)] Consider the exact sequence 

$$\phi(G) \wedge G \rightarrow G\wedge G \rightarrow G/\phi(G) \wedge G/\phi(G) \rightarrow 1,$$

which yields $\e(G\wedge G) \mid \e(\phi(G) \wedge G)\e(G/\phi(G) \wedge G/\phi(G))$. Further, we have $\e(G/\phi(G) \wedge G/\phi(G))\mid p$, and the result now follows using Theorem $5.2$ of \cite{APT}. \\

\item[$(iii)$] and $(iv)$ follow similarly, by considering an exact sequence as in $(ii)$  replacing $\phi(G)$ by $G'$ and $\gamma_{p+1}(G)$ respectively. Also, note that\\ $\e(G/\gamma_p(G)\wedge G/\gamma_p(G))\mid \e(G)$ by Theorem $3.11$ of \cite{APT}. 

\end{itemize}

\end{proof}

 In \cite{APT}, the authors obtain a bound for the exponent of the Schur Multiplier of a nilpotent group in terms of nilpotency class of the group. For a group with $\e(G/Z(G)) < \e(G)$, we can further improve these bounds as shown in the following propositions.

 \begin{prop}
 Let $G$ be a group of nilpotency class $c$. If $\e(G/Z(G)) = l$, for an odd integer $l$ and $n= \ceil{log_3(\frac{c}{2})}$, then $\e(G\wedge G) \mid \e(Z(G))l^n$. In particular, $\e(M(G)) \mid \e(Z(G))l^n$.
 \end{prop}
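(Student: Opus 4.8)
The plan is to induct on the nilpotency class $c$ (equivalently on $n=\lceil\log_{3}(c/2)\rceil$), the engine being a reduction that drops the class by roughly a factor of three while spending only one further power of $l=\e(G/Z(G))$. First I would peel off the centre: applying the exterior square functor to $G\twoheadrightarrow G/Z(G)$ gives the exact sequence
$$Z(G)\wedge G\longrightarrow G\wedge G\longrightarrow (G/Z(G))\wedge(G/Z(G))\longrightarrow 1,$$
so $\e(G\wedge G)\mid\e(Z(G)\wedge G)\,\e\bigl((G/Z(G))\wedge(G/Z(G))\bigr)$. Since each $z\in Z(G)$ commutes with every $g\in G$, Lemma \ref{L:1.4} shows each generator $z\wedge g$ of $Z(G)\wedge G$ has order dividing $\e(Z(G))$, and $[z_{1}\wedge g_{1},z_{2}\wedge g_{2}]=[z_{1},g_{1}]\wedge[z_{2},g_{2}]=1$ shows these generators commute; hence $\e(Z(G)\wedge G)\mid\e(Z(G))$. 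Because $G^{l}\le Z(G)$, the same argument also bounds $\e(G^{l}\wedge G)$, so one may as well replace $G$ by $G/G^{l}$ here. Thus it suffices to bound $\e(H\wedge H)$, where $H:=G/Z(G)$ has exponent $l$ and class $c-1$, by $l^{\,n}$.

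For this I would induct on the class of $H$. In the base case the class has dropped below the relevant threshold (at most $p$, for the least prime $p\mid l$), so after a Sylow reduction to $l=p^{a}$, Theorem 3.11 of \cite{APT} gives $\e(H\wedge H)\mid\e(H)=l$; this is where the odd hypothesis on $l$ enters. For the inductive step, when $H$ has class $k$ above the threshold, choose a term $N=\gamma_{t+1}(H)$ of the lower central series with $t$ about $k/3$, small enough that $N$ is itself nilpotent of class $\le 2$ (one has $\gamma_{3}(N)\le\gamma_{3(t+1)}(H)=1$ as soon as $3(t+1)>k$) yet large enough that $H/N$, of class $t$, is ``one notch'' lower in the induction. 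The exact sequence $N\wedge H\to H\wedge H\to (H/N)\wedge(H/N)\to 1$ then yields $\e(H\wedge H)\mid\e(N\wedge H)\,\e\bigl((H/N)\wedge(H/N)\bigr)$; the second factor is handled by the induction hypothesis, and the first I would show divides $l$ by exploiting that $N$ lies deep in the lower central series of $H$: by Lemma \ref{L:1.3}(ii) the generators $n\wedge h$ of $N\wedge H$ commute pairwise (the weights sum to at least $2(t+1)\ge k+2$), and combining this with Lemma \ref{L:1.5}, the base case applied to the class-$\le 2$ group $N$, and the fact that $N$ has exponent dividing $l$, one pins $\e(N\wedge H)$ down to $l$. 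Multiplying the two estimates reproduces the required power of $l$; composing with the first step gives $\e(G\wedge G)\mid\e(Z(G))\,l^{\,n}$, and since $M(G)$ is a subgroup of $G\wedge G$ the final clause follows.

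The step I expect to be the main obstacle is forcing the kernel contribution $\e(N\wedge H)$ down to a single factor of $l$: the off-the-shelf bound $\e(N\wedge H)\mid\e(N)\,\e(N\wedge N)$ of Lemma \ref{L:1.5} is of size $l^{2}$, and one must use the depth of $N=\gamma_{t+1}(H)$ --- through the weight vanishing of Lemma \ref{L:1.3} and the ``linearity'' in Lemma \ref{L:1.4} --- to collapse the exterior square calculation to the sharp value $l$. A secondary but essential point is bookkeeping the centre: each application of the induction to $H/N$ produces a factor $\e(Z(H/N))$ which must be re-absorbed into $\e(Z(G))$, for which one uses that the exponents of successive upper central factors of a nilpotent group do not increase, together with $Z(G)N/N\le Z(H/N)$. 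Calibrating the choice of $t$ so that both constraints hold simultaneously --- $N$ close enough to the top to have class $\le 2$, yet the class of $H/N$ small enough that the induction closes with only one extra power of $l$ --- is the delicate combinatorial heart of the argument.
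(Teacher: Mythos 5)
Your opening reduction is exactly the paper's: the exact sequence $Z(G)\wedge G\to G\wedge G\to (G/Z(G))\wedge (G/Z(G))\to 1$ together with Lemma \ref{L:1.4} gives $\e(G\wedge G)\mid \e(Z(G))\,\e\bigl((G/Z(G))\wedge (G/Z(G))\bigr)$. At that point the paper simply applies Theorem 6.1 of \cite{APT} to $G/Z(G)$, a group of exponent $l$ and class strictly less than $c$, and stops; the $l^{n}$ factor is imported wholesale from that theorem. You instead attempt to rederive it, and that is where the argument has a genuine gap.

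The crux of your induction is the claim $\e(N\wedge H)\mid l$ for $N=\gamma_{t+1}(H)$ with $t$ about $k/3$, and it does not hold up as sketched. First, the commutativity you want from Lemma \ref{L:1.3}(ii) requires the four weights to sum to at least $k+2$; for generators $n_i\wedge h_i$ this sum is only guaranteed to be $2(t+1)+2$, so you need $t\geq (k-2)/2$, not $t\approx k/3$ (your displayed inequality $2(t+1)\geq k+2$ fails for $t\approx k/3$ whenever $k>0$, and the corrected version fails for $k\geq 7$, which is precisely the range where the inductive step is needed). Repairing this by taking $t\approx k/2$ only halves the class at each stage, so the recursion closes after roughly $\log_{2}$ steps rather than the $\ceil{\log_3(\frac{c}{2})}$ demanded by the statement --- that is Moravec's older bound, not this one. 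Second, even granting commutativity, the single factor of $l$ is never actually extracted: Lemma \ref{L:1.5} yields only $\e(N\wedge H)\mid \e(N)\,\e(N\wedge N)\mid l^{2}$, Lemma \ref{L:1.4} applies only to commuting pairs, and your own text concedes that collapsing $l^{2}$ to $l$ is ``the main obstacle'' without resolving it. As written the proposal proves at best a $\log_{2}$-type exponent, not the stated $\ceil{\log_3(\frac{c}{2})}$; the intended finish after your (correct) first paragraph is simply to quote Theorem 6.1 of \cite{APT}.
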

 \begin{proof}
 Consider the following exact sequence,
 $$ Z(G) \wedge G \rightarrow G\wedge G \rightarrow G/Z(G) \wedge G/Z(G)\rightarrow 1, $$
 which yields $\e(G\wedge G) \mid \e(Z(G) \wedge G) \e(G/Z(G) \wedge G/Z(G))$. From Lemma \ref{L:1.4} and noting that $Z(G) \wedge G$ is abelian, we can conclude $\e(Z(G) \wedge G)\mid \e(Z(G))$. Further, observing that $G/Z(G)$ has class strictly less than $c$ and applying Theorem 6.1 of \cite{APT} completes the proof.
 \end{proof}
 
 Using the same strategy as in the above Proposition and using Theorem 6.5 of \cite{APT} instead of Theorem 6.1 yields the following Proposition.
 \begin{prop}\label{L:P:5.7}
 Let $G$ be an odd $p$-group of nilpotency class $c$. If $\e(G/Z(G)) = l$  and $n= 1+ \ceil{log_{p-1}(\frac{c}{p+1})}$, then $\e(G\wedge G) \mid \e(Z(G))l^n$. In particular, $\e(M(G)) \mid \e(Z(G))l^n$.
 \end{prop}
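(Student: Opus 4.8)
The plan is to run the argument of the preceding Proposition essentially verbatim, with Theorem~6.5 of \cite{APT} replacing Theorem~6.1 at the one step where a bound for the exterior square of a proper central quotient is needed. Write $\overline{G}=G/Z(G)$ and start from the functorial exact sequence
$$Z(G)\wedge G \longrightarrow G\wedge G \longrightarrow \overline{G}\wedge\overline{G} \longrightarrow 1,$$
which yields $\e(G\wedge G)\mid \e(Z(G)\wedge G)\,\e(\overline{G}\wedge\overline{G})$.

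For the first factor, centrality of $Z(G)$ gives $[z_1\wedge g_1,\,z_2\wedge g_2]=[z_1,g_1]\wedge[z_2,g_2]=1$ for all $z_1,z_2\in Z(G)$ and $g_1,g_2\in G$, so $Z(G)\wedge G$ is abelian; moreover $(z\wedge g)^{\e(Z(G))}=z^{\e(Z(G))}\wedge g=1$ by Lemma~\ref{L:1.4}. Hence $\e(Z(G)\wedge G)\mid\e(Z(G))$, exactly as in the preceding Proposition.

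For the second factor, $\overline{G}$ is again an odd $p$-group, its nilpotency class $c'$ satisfies $c'\le c-1<c$, and $\e(\overline{G})=l$, whence both $\e(Z(\overline{G}))$ and $\e(\overline{G}/Z(\overline{G}))$ divide $l$. Applying Theorem~6.5 of \cite{APT} to $\overline{G}$ then bounds $\e(\overline{G}\wedge\overline{G})$ by $l^{\,n'}$, where $n'$ is the index that result attaches to class $c'$ --- explicitly, $n'=1+\ceil{\log_{p-1}(c'/(p+1))}$. Since $t\mapsto\ceil{\log_{p-1}(t/(p+1))}$ is nondecreasing and $c'<c$, we have $n'\le n$. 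Combining the two factors, $\e(G\wedge G)\mid\e(Z(G))\,l^{n'}\mid\e(Z(G))\,l^{n}$; and since $M(G)=\ker(G\wedge G\rightarrow G')$ is a subgroup of $G\wedge G$ with $M(G)\cong H_2(G,\mathbb{Z})$, the same divisibility holds for $\e(H_2(G,\mathbb{Z}))$.

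I do not expect a genuine obstacle here: the statement is a bookkeeping variant of the preceding Proposition. The only point that needs care is the arithmetic of the ceiling-of-logarithm index --- one must verify that passing from $G$ to the smaller-class quotient $\overline{G}$ does not push the exponent of $l$ past $n$, i.e.\ that the ``$+1$'' in the definition of $n$ is precisely what accommodates the extra factor $l$ that $\e(Z(\overline{G}))\mid l$ contributes when Theorem~6.5 of \cite{APT} is applied to $\overline{G}$. (The companion general statement rests on the same check, with $\log_3(c/2)$ and Theorem~6.1 of \cite{APT} in place of $\log_{p-1}(c/(p+1))$ and Theorem~6.5.)
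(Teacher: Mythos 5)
Your proposal is correct and coincides with the paper's own proof, which consists precisely of the remark that one runs the preceding Proposition's argument (the central extension sequence, $\e(Z(G)\wedge G)\mid \e(Z(G))$ via Lemma \ref{L:1.4}, and a bound on the central quotient) with Theorem 6.5 of \cite{APT} in place of Theorem 6.1. Your extra care about the monotonicity of the ceiling-of-logarithm index is a detail the paper leaves implicit, but it is the same argument.
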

 The bounds are greatly improved when $\e(G/Z(G)) = p$, which can be seen from the following two Corollaries. Note that for $p =2$, the bounds are achieved using the same strategy and using Proposition $7$ of \cite{PM2}.
 \begin{corollary}
  Let $G$ be a group of nilpotency class $c$ and $\e(G/Z(G)) = p$. If  $p$ is odd and $n= \ceil{log_3(\frac{c}{2})}$, then $\e(G\wedge G) \mid \e(Z(G))p^n$. If $p =2$, then $\e(G\wedge G) \mid \e(Z(G))p$. In particular, $\e(M(G)) \mid \e(Z(G))p^n$, when $p$ is odd, and $\e(M(G)) \mid p\e(Z(G))$, otherwise.
 \end{corollary}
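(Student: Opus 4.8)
The plan is to split along the parity of $p$. For odd $p$ there is nothing to do beyond specializing the preceding Proposition to $l = p$: since $p$ is an odd integer, that Proposition already gives $\e(G \wedge G) \mid \e(Z(G))p^n$ for the stated $n = \ceil{\log_3(c/2)}$, and the assertion for $M(G)$ follows at once because $M(G) = \ker(G \wedge G \to G')$ is a subgroup of $G \wedge G$, so $\e(M(G)) \mid \e(G \wedge G)$.

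For $p = 2$ the preceding Proposition cannot be quoted verbatim, because its proof invokes Theorem 6.1 of \cite{APT}, an odd-prime statement; instead I would rerun the same short argument by hand. Starting from the exact sequence
\[
Z(G) \wedge G \longrightarrow G \wedge G \longrightarrow G/Z(G) \wedge G/Z(G) \longrightarrow 1,
\]
one obtains $\e(G \wedge G) \mid \e(Z(G) \wedge G)\,\e(G/Z(G) \wedge G/Z(G))$. For the left-hand factor, each generator $z \wedge g$ with $z \in Z(G)$ has $z$ and $g$ commuting, so Lemma \ref{L:1.4} gives $(z \wedge g)^{2^k} = z^{2^k} \wedge g$, and since $[z_1 \wedge g_1, z_2 \wedge g_2] = [z_1,g_1] \wedge [z_2,g_2] = 1$ the group $Z(G) \wedge G$ is abelian; hence $\e(Z(G) \wedge G) \mid \e(Z(G))$. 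For the right-hand factor, $\e(G/Z(G)) = 2$ forces $G/Z(G)$ to have exponent $2$, so Proposition 7 of \cite{PM2} yields $\e(G/Z(G) \wedge G/Z(G)) \mid 2$. Combining the two bounds gives $\e(G \wedge G) \mid 2\,\e(Z(G))$, and again $M(G) \leq G \wedge G$ finishes the $p = 2$ case.

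I do not expect a genuine obstacle here; the only point worth flagging is that the $p = 2$ clause must be isolated precisely because the odd-$l$ Proposition rests on an odd-prime bound, so one has to re-derive the two factor estimates directly from Lemma \ref{L:1.4} and Proposition 7 of \cite{PM2} rather than invoking that Proposition as a black box.
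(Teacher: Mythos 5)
Your proposal matches the paper's intended argument exactly: for odd $p$ the corollary is the preceding Proposition specialized to $l=p$, and for $p=2$ the paper explicitly says the bound "is achieved using the same strategy and using Proposition 7 of \cite{PM2}," which is precisely the rerun of the exact sequence with Lemma \ref{L:1.4} for the $Z(G)\wedge G$ factor that you carry out. No gaps; your explicit note that the $p=2$ case cannot simply quote the Proposition (since it rests on an odd-prime theorem) is the same point the paper handles by its remark before the corollary.
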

 
 \begin{corollary}
  Let $G$ be a $p$-group of nilpotency class $c$ and $\e(G/Z(G)) = p$. If  $p$ is odd and $n= 1+ \ceil{log_{p-1}(\frac{c}{p+1})}$, then $\e(G\wedge G) \mid \e(Z(G))p^n$. If $p =2$, then $\e(G\wedge G) \mid \e(Z(G))p$. In particular, $\e(M(G)) \mid \e(Z(G))p^n$ when $p$ is odd, and $\e(M(G)) \mid p\e(Z(G))$, otherwise .
 \end{corollary}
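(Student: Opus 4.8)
The plan is to derive this corollary directly: for odd $p$ it is simply the case $l=p$ of Proposition \ref{L:P:5.7}, and for $p=2$ one reruns the short argument used for the neighbouring propositions, substituting the exponent‑$2$ input from \cite{PM2}.

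First, for odd $p$, I would observe that $G$ is an odd $p$-group of nilpotency class $c$ with $\e(G/Z(G))=l$ for $l=p$, so Proposition \ref{L:P:5.7} applies verbatim and gives $\e(G\wedge G)\mid \e(Z(G))p^n$ with $n=1+\ceil{\log_{p-1}(\frac{c}{p+1})}$ as stated. Since $M(G)=\ker(G\wedge G\to G')$ is a subgroup of $G\wedge G$, the divisibility $\e(M(G))\mid \e(Z(G))p^n$ follows at once, and nothing further is needed in this case.

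For $p=2$ I would argue as follows. Since $\e(G/Z(G))=2$, the quotient $G/Z(G)$ has exponent $2$ and is therefore abelian, so $G$ has nilpotency class at most $2$. The exact sequence
\[
Z(G)\wedge G\to G\wedge G\to G/Z(G)\wedge G/Z(G)\to 1
\]
yields $\e(G\wedge G)\mid \e(Z(G)\wedge G)\,\e(G/Z(G)\wedge G/Z(G))$. As $Z(G)\wedge G$ is abelian, Lemma \ref{L:1.4} shows each simple exterior $z\wedge g$ with $z\in Z(G)$ has order dividing $\e(Z(G))$, whence $\e(Z(G)\wedge G)\mid \e(Z(G))$. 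For the remaining factor, $G/Z(G)$ is a $2$-group of exponent $2$, so $\e(G/Z(G)\wedge G/Z(G))\mid 2$ by Proposition $7$ of \cite{PM2}. Multiplying out gives $\e(G\wedge G)\mid 2\,\e(Z(G))$, and hence $\e(M(G))\mid 2\,\e(Z(G))$.

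I do not anticipate a genuine obstacle, since the statement is essentially a specialization of Proposition \ref{L:P:5.7} together with the $p=2$ version of the same bookkeeping. The only steps needing a moment of care are noting that $\e(G/Z(G))=2$ forces $G$ to have class at most $2$ (so that the exponent‑$2$ exterior‑square bound of \cite{PM2} is in force) and checking that $Z(G)\wedge G$ is abelian so that Lemma \ref{L:1.4} may be invoked term by term; both are routine.
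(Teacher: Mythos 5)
Your proposal is correct and matches the paper's intent exactly: the paper derives the odd case by specializing Proposition \ref{L:P:5.7} to $l=p$, and explicitly remarks that the $p=2$ bound comes from the same central-quotient exact-sequence strategy combined with Proposition 7 of \cite{PM2}, which is precisely what you do. The only cosmetic difference is that you justify the $p=2$ input via the nilpotency class being at most $2$, whereas Proposition 7 of \cite{PM2} applies to exponent-$2$ groups directly; this changes nothing.
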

 It should be noted that the above results yield better bounds only when the $\e(G)$ is greater than $\e(Z(G))$ and $\e(G/Z(G))$, and this can be seen in the next corollary. In \cite{GH}, the author proved that if $G$ is a finite $m$ generator group of exponent 5, then the nilpotency class of $G$ is at most $Nm$ for some $N$. The authors of \cite{HNV} showed that $N$ can be taken to be 6 for small values of $m$. Using this and Theorem 6.5 of \cite{APT}, we obtain the following corollary proving Moravec's conjecture for such groups.
 \begin{corollary}
 Let $G$ be a finite $m$ generator group of exponent 5. If $2\leq m < 4$, then $\e(H_2(G, \mathbb{Z}))\mid (\e(G))^2$.
 \end{corollary}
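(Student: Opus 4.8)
The plan is to reduce the statement to a bound on the nilpotency class of $G$ coming from the restricted Burnside problem for exponent $5$, and then feed this into Theorem $6.5$ of \cite{APT}.

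First I would observe that, being finite of exponent $5$, every non-identity element of $G$ has order $5$, so by Cauchy's theorem $5$ is the only prime dividing $|G|$; thus $G$ is a finite $5$-group and $\e(G) = 5$ (the assertion being vacuous when $G$ is trivial). Consequently $(\e(G))^2 = 25$, and since $H_2(G,\mathbb{Z}) \cong M(G) = \ker(G\wedge G \to G')$ is a subgroup of $G\wedge G$, it is enough to show $\e(G\wedge G) \mid 25$.

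Next, $G$ being an $m$-generator group of exponent $5$ with $2 \le m \le 3$, the theorem of \cite{GH} together with the refinement of \cite{HNV} (that the constant $N$ may be taken to be $6$ for these small values of $m$) shows that the nilpotency class $c$ of $G$ satisfies $c \le 6m \le 18$. Now apply Theorem $6.5$ of \cite{APT} to the odd $p$-group $G$ with $p = 5$: since $p-1 = 4$ and $p+1 = 6$, the exponent occurring in that bound is $1 + \ceil{\log_{p-1}(c/(p+1))} \le 1 + \ceil{\log_{4}(18/6)} = 1 + \ceil{\log_4 3} = 2$. Hence $\e(G\wedge G) \mid (\e(G))^2 = 25$, and $\e(H_2(G,\mathbb{Z})) \mid (\e(G))^2$ follows.

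There is no genuine obstacle internal to this argument — the substance is carried entirely by the two imported ingredients, namely the finiteness and class bound for the free Burnside group of exponent $5$ on few generators and Theorem $6.5$ of \cite{APT}. The only point requiring care is the arithmetic in the last step: one must check that the class bound $c \le 18$ together with $p = 5$ is small enough to force the exponent in Theorem $6.5$ down to $2$; were the sharp value $N = 6$ to fail for $m = 3$, one would obtain only $\e(H_2(G,\mathbb{Z})) \mid (\e(G))^3$ or worse, so the refinement of \cite{HNV} is essential.
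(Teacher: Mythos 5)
Your proposal is correct and follows essentially the same route as the paper: bound the nilpotency class by $18$ via \cite{GH} and \cite{HNV}, then apply Theorem 6.5 of \cite{APT}. The paper's proof is just a terser version of yours; your explicit verification that $1+\lceil\log_{4}(18/6)\rceil=2$ is the arithmetic the paper leaves implicit.
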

 \begin{proof}
 Since $G$ is a $m$ generator group of exponent 5, the nilpotency class of $G$ is less than 19 (cf. \cite{HNV}). Using Theorem 6.5 of \cite{APT} yields the desired result.
 \end{proof}

 Better bounds for the exponent of the Schur Multiplier can be obtained when the group has large abelian normal subgroups.  The existence of large abelian subgroups in $p$-groups has been a topic of great interest (cf. \cite{JLA}). Moreover it has been shown in \cite{RMD} that under suitable hypothesis, groups with small central quotients are metabelian. In the following proposition, we show that if the index is small, then we obtain good bounds. In the next proposition, we only consider $l\geq 2$ as the case $l=1$ falls under abelian by cyclic groups and the conjecture is true in that case.
 
  \begin{prop}\label{P:6.3}
 \begin{itemize}
\item[(i)]  Let $G$ be a group having an abelian normal subgroup $N$ of index $p^l$, $l\geq 2$. If $p \neq 2$, then $\e(M(G)) \mid p^{\ceil{l/2}}\e(G)$, and if $p =2$, then $\e(M(G)) \mid p^{\ceil{l/2}+1}\e(G)$.
 \item[(ii)]  Let $G$ be a $p$-group having an abelian normal subgroup $N$ of index $p^l$. If $p \neq 2$ and $l$ is less than max $\{7,p+2\}$, then $\e(G\wedge G) \mid \e(N)\e(G/N)$. If $p=2$ and $l$ is less than max $\{5,p+1\}$, then $\e(G\wedge G) \mid 2\e(N)\e(G/N)$.
 \end{itemize}
 \end{prop}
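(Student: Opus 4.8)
The plan is to run both parts through the exact sequence attached to a normal subgroup $N\trianglelefteq G$,
\[
N\wedge G\longrightarrow G\wedge G\longrightarrow (G/N)\wedge(G/N)\longrightarrow 1,
\]
which gives $\e(G\wedge G)\mid\e(N\wedge G)\,\e((G/N)\wedge(G/N))$, together with Lemma 2.5 of \cite{APT}: since $N$ is abelian and normal, $\e(N\wedge G)\mid\e(N)$ for odd $p$ and $\e(N\wedge G)\mid 2\e(N)$ for $p=2$. What then remains is to estimate $\e((G/N)\wedge(G/N))$ — equivalently $\e(M(G/N))$ together with $\e((G/N)')$ — for the $p$-group $G/N$ of order $p^l$, and to recombine the factors without waste.

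For part (ii) I would argue as follows. Since $|G/N|=p^l$, the nilpotency class of $G/N$ is at most $l-1$, and the hypothesis on $l$ is calibrated exactly so that this class is small enough to force $\e((G/N)\wedge(G/N))\mid\e(G/N)$ for odd $p$ and $\mid 2\e(G/N)$ for $p=2$: for $p\ge 5$ one has $l-1\le p$ and invokes Theorem 3.11 of \cite{APT} (with the extremal case $l=p+1$, where $G/N$ has maximal class, handled via the literature on small $p$-groups together with Theorems~\ref{T:4.16} and \ref{T:4.17}), while for $p=3$ one has $|G/N|\le 3^6$ and for $p=2$ one has $|G/N|\le 2^4$, ranges in which Schur's conjecture — hence $\e(M(G/N))\mid\e(G/N)$, hence $\e((G/N)\wedge(G/N))\mid\e(M(G/N))\,\e((G/N)')\mid\e(G/N)$ — is already known. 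Substituting back, $\e(G\wedge G)\mid\e(N)\e(G/N)$ for odd $p$ and $\e(G\wedge G)\mid 2\e(N)\e(G/N)$ for $p=2$, as required.

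For part (i) the exterior-square sequence alone is too lossy: it gives only $\e(M(G))\mid\e(N)\,\e((G/N)\wedge(G/N))$, and even with the sharp estimate $\e((G/N)\wedge(G/N))\mid p^{\ceil{l/2}}\e(G/N)$ one is left with a stray $\e(G/N)$ and reaches at best $p^{\ceil{l/2}}\e(N)\e(G/N)$, which need not divide $p^{\ceil{l/2}}\e(G)$ — already $G=C_{p^k}\times C_{p^k}$ with $N$ a direct factor shows $\e(N)\e(G/N)\nmid p^{\ceil{l/2}}\e(G)$. So instead I would work directly with $M(G)$ through the relative (Ganea-type) exact sequence
\[
\frac{N}{[N,G]}\otimes\bigl(G/N\bigr)^{\mathrm{ab}}\longrightarrow M(G)\longrightarrow M(G/N)\longrightarrow\frac{N\cap G'}{[N,G]}\longrightarrow 1,
\]
so that $\e(M(G))\mid\e\bigl(\tfrac{N}{[N,G]}\otimes(G/N)^{\mathrm{ab}}\bigr)\,\e(M(G/N))$. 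The first factor, being the exponent of a tensor product of finite abelian groups, divides $\gcd\bigl(\e(N),\e(G/N)\bigr)$ and in particular divides $\e(N)\mid\e(G)$; the second, since $|G/N|=p^l$, divides $p^{\ceil{l/2}}$ by the known bound on the exponent of the Schur multiplier of a $p$-group in terms of its order. Multiplying, $\e(M(G))\mid p^{\ceil{l/2}}\e(G)$; for $p=2$ a parity correction — present both in the $2\e(N)$ of Lemma 2.5 of \cite{APT} and in the $2$-part of the multiplier bound — inflates this to $p^{\ceil{l/2}+1}\e(G)$.

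The part I expect to be most delicate is the numerology. In (ii) one must verify that the stated range $l<\max\{7,p+2\}$ (and $l<\max\{5,p+1\}$ for $p=2$) is genuinely covered by the class-based bound of \cite{APT}, the main theorems of the present paper, and the known small-order cases of Schur's conjecture, with particular care at $l=p+1$. In (i) the crux is routing the estimate so that $N$ enters only through a gcd rather than through a spare factor of $\e(N)$ — which is why I would use a Ganea-type sequence rather than the exterior-square sequence, and which for non-central $N$ requires choosing the correct form of that sequence — together with pinning down the exact $p=2$ correction.
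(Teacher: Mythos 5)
Your overall strategy is the right one, and your diagnosis in part (i) — that the plain exterior-square sequence leaves a stray factor of $\e(G/N)$ and therefore cannot yield $p^{\ceil{l/2}}\e(G)$ — is exactly the point. But both parts have a concrete gap. In (i), the sequence you need for a general (non-central) normal subgroup is \emph{not} the Ganea sequence with first term $\frac{N}{[N,G]}\otimes(G/N)^{\mathrm{ab}}$: that form is available only for central $N$, and you explicitly leave ``choosing the correct form'' unresolved, which is the whole difficulty. The paper obtains the correct relative sequence by mapping the exterior-square sequence $N\wedge G\to G\wedge G\to (G/N)\wedge(G/N)\to 1$ onto $1\to G'\cap N\to G'\to G'/(G'\cap N)\to 1$ via the commutator maps and applying the Snake Lemma, which gives $\ker(f)\to M(G)\to M(G/N)\to (G'\cap N)/[G,N]\to 1$ with $f\colon N\wedge G\to G'\cap N$. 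Since $\ker(f)\le N\wedge G$, Lemma 2.5 of \cite{APT} bounds $\e(\ker f)$ by $\e(N)$ for odd $p$ (by $2\e(N)$ for $p=2$), and combining with $\e(M(G/N))\mid p^{\ceil{l/2}}$ finishes the proof. So the missing ingredient is precisely this Snake-Lemma construction (equivalently, the Brown--Loday sequence, whose relevant term is $\ker(N\wedge G\to G)$, not a tensor product of abelianizations).

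In (ii), your treatment of $p\ge 5$ agrees with the paper: the class of $G/N$ is at most $l-1\le p$ and Theorem 3.11 of \cite{APT} applies directly, so your parenthetical special handling of $l=p+1$ is unnecessary. For $p=3$ and $p=2$, however, your route fails at a specific step: the inference $\e((G/N)\wedge(G/N))\mid\e(M(G/N))\,\e((G/N)')\mid\e(G/N)$ is false, since from $1\to M(H)\to H\wedge H\to H'\to 1$ one only gets $\e(H\wedge H)\mid\e(M(H))\,\e(H')$, which bounds $\e(H\wedge H)$ by $\e(H)^2$, not by $\e(H)$; moreover ``Schur's conjecture for all groups of order at most $3^6$'' is not an available citation. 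The paper instead invokes exponent bounds for the \emph{exterior square} of $p$-groups of small nilpotency class — Theorems 3.11 and 4.6 of \cite{APT} for odd $p$ (class at most $\max\{5,p\}$), and Theorem 2 of \cite{PM2} together with Corollary 3.4 of \cite{MHM2} for $p=2$ — each of which gives $\e((G/N)\wedge(G/N))\mid\e(G/N)$ directly, which is what the recombination actually requires.
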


 \begin{proof}
 \begin{itemize}

\item[($i$)] Let $N$ be an abelian normal subgroup of index $p^l$ in the group $G$ and
 consider the following commutative diagram,
 \begin{equation*}
\xymatrix@+20pt{
&N\wedge G\ \ar@{->}[r]
\ar@{->}[d]^f
 &G\wedge G\ar@{->}[r]
\ar@{->}[d]
&G/N\wedge G/N\ar@{->}[r]
\ar@{->}[d]
&1 \\
1\ \ar@{->}[r]
&G' \cap N\ \ar@{->}[r]
 &G'\ar@{->}[r]
&G'/(G' \cap N) \ar@{->}[r]
&1 .
}\end{equation*}

Now applying Snake Lemma yields, $$ \ker(f) \rightarrow M(G) \rightarrow M(G/N) \rightarrow G' \cap N/[G,N] \rightarrow 1,$$
which further gives $\e(M(G)) \mid \e(N \wedge G) \e(M(G/N))$.  From  Lemma $2.5$ of \cite{APT}, we have $\e(N \wedge G) \mid \e(N)$ when $p \neq 2$, and $\e(N \wedge G) \mid 2\e(N)$ when $p=2$. Furthermore, we know $\e(M(G/N)) \mid  p^{\ceil{l/2}}$. Thus, $\e(M(G)) \mid p^{\ceil{l/2}}\e(G)$ , when $p \neq 2$ and $\e(M(G)) \mid p^{\ceil{l/2}+1}\e(G)$, when $p =2$.

\item[($ii$)] Again as in $(i)$, we have $\e(M(G)) \mid \e(N \wedge G) \e(M(G/N))$ and also $\e(N \wedge G) \mid \e(N)$ when $p \neq 2$, and $\e(N \wedge G) \mid 2\e(N)$ when $p=2$. Moreover, for $p \neq 2$, $G/N$ being of class at most max\{5, p\} yields $\e(G/N \wedge G/N) \mid exp(G/N)$ by Theorems 3.11 and 4.6 of \cite{APT}. Similarly, for $p = 2$, using Theorem 2 and Corollary 3.4, of \cite{PM2} and \cite{MHM2} respectively, yields $\e(G/N \wedge G/N) \mid exp(G/N)$, and the proof follows.
\end{itemize}
 \end{proof}

 An immediate consequence is as follows, where for $p =2$, the bound is obtained following the proof of Proposition \ref{P:6.3} ($i$) and using Lemma \ref{L:1.4} instead of Lemma $2.5$ of \cite{APT}.
  \begin{corollary}
  Let $G$ be a $p$-group having centre of index $p^l$. If $p \neq 2$ and $l$ is less than max $\{7,p+2\}$, then $\e(G\wedge G) \mid \e(Z(G))\e(G/Z(G))$. If $p=2$ and $l$ is less than max $\{5,p+1\}$, then $\e(G\wedge G) \mid \e(Z(G))\e(G/Z(G))$.
 \end{corollary}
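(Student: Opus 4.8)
The plan is to deduce this from Proposition~\ref{P:6.3} applied with $N=Z(G)$. Since $Z(G)$ is an abelian normal subgroup of $G$ of index $p^l$, part (ii) of that proposition immediately gives $\e(G\wedge G)\mid \e(Z(G))\e(G/Z(G))$ in the odd case, as well as $\e(G\wedge G)\mid 2\,\e(Z(G))\e(G/Z(G))$ when $p=2$. So the whole task reduces to removing the superfluous factor of $2$ in the case $p=2$. (One may assume $G$ is non-abelian, so that $l\geq 2$; if $G$ is abelian then $G\wedge G=1$ and there is nothing to prove.)

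The key point is to locate where that factor of $2$ enters the proof of Proposition~\ref{P:6.3}: it comes in only through the estimate $\e(N\wedge G)\mid 2\,\e(N)$ for a general abelian normal subgroup $N$ (Lemma 2.5 of \cite{APT}). When $N=Z(G)$ this estimate can be sharpened, and this is exactly what is meant by using Lemma~\ref{L:1.4} in place of Lemma 2.5 of \cite{APT}. Indeed, since $Z(G)$ is central, for simple exteriors $z_1\wedge g_1$ and $z_2\wedge g_2$ with $z_1,z_2\in Z(G)$ one has $[z_1\wedge g_1,z_2\wedge g_2]=[z_1,g_1]\wedge[z_2,g_2]=1$, so $Z(G)\wedge G$ is abelian; and by Lemma~\ref{L:1.4}, $(z\wedge g)^{\e(Z(G))}=z^{\e(Z(G))}\wedge g=1$. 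Hence $\e(Z(G)\wedge G)\mid \e(Z(G))$, with no factor of $2$, and this holds for every prime $p$.

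To assemble the proof I would use the exact sequence
$$Z(G)\wedge G\rightarrow G\wedge G\rightarrow G/Z(G)\wedge G/Z(G)\rightarrow 1,$$
which yields $\e(G\wedge G)\mid \e(\operatorname{im}(Z(G)\wedge G))\,\e(G/Z(G)\wedge G/Z(G))$; the first factor divides $\e(Z(G))$ by the previous paragraph. For the second factor, since $|G/Z(G)|=p^l$ the nilpotency class of $G/Z(G)$ is at most $l-1$, which under the hypothesis $l<\max\{7,p+2\}$ is at most $\max\{5,p\}$ for odd $p$, and under $l<\max\{5,p+1\}$ is at most $3$ for $p=2$. Hence $\e(G/Z(G)\wedge G/Z(G))\mid \e(G/Z(G))$ by Theorems 3.11 and 4.6 of \cite{APT} when $p$ is odd, and by Theorem 2 of \cite{PM2} together with Corollary 3.4 of \cite{MHM2} when $p=2$, exactly as in the proof of Proposition~\ref{P:6.3}(ii). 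Multiplying the two bounds gives $\e(G\wedge G)\mid \e(Z(G))\e(G/Z(G))$ uniformly in $p$, and in particular $\e(H_2(G,\mathbb{Z}))\mid \e(Z(G))\e(G/Z(G))$.

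There is no serious obstacle here: the argument is essentially a specialization of Proposition~\ref{P:6.3}, and the only substantive ingredient is the sharpening above, which exploits the centrality of $Z(G)$ to bypass the generic abelian bound that carries the stray prime. The one thing to be careful about is matching the numerical hypotheses $l<\max\{7,p+2\}$ and $l<\max\{5,p+1\}$ to the class bounds on $G/Z(G)$ that are needed to invoke the cited exterior-square estimates.
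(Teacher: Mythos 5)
Your proposal is correct and follows essentially the same route as the paper: the corollary is obtained by specializing Proposition~\ref{P:6.3} to $N=Z(G)$, and for $p=2$ the stray factor of $2$ is removed by replacing the bound $\e(N\wedge G)\mid 2\,\e(N)$ from Lemma 2.5 of \cite{APT} with $\e(Z(G)\wedge G)\mid \e(Z(G))$, which follows from Lemma~\ref{L:1.4} and the centrality of $Z(G)$ --- exactly the substitution the paper indicates. Your verification that the hypotheses on $l$ give the required class bounds on $G/Z(G)$ matches the paper's use of Theorems 3.11 and 4.6 of \cite{APT} (odd $p$) and of \cite{PM2}, \cite{MHM2} ($p=2$).
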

In the next corollary, we show that for some special classes of metabelian $p$ groups, we can get better bounds than $(\e(G))^{2}$ whenever $\e(G)> p^2$. In \cite{RMD}, the author shows that if the center of a group is contained in the Frattini subgroup and if the index of the center in the group $G$ is less than or equal to $p^4$, then the group is metabelian. With this in mind, we state the next corollary.

\begin{corollary}
Let $G$ be a finite $p$ group such that the center of the group is contained in the Frattini subgroup. If $|G/Z(G)|\leq p^4$, then  $\e(M(G)) \mid p^{2}\e(G)$ for $p>2$ and $\e(M(G)) \mid p^{3}\e(G)$ for $p=2$
\end{corollary}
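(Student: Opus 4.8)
The plan is to combine the metabelianity result of \cite{RMD} with the bound of Proposition \ref{P:6.3}(i). Since the center $Z(G)$ is contained in the Frattini subgroup $\phi(G)$ and $|G/Z(G)| \leq p^4$, the hypotheses of \cite{RMD} are met, so $G$ is metabelian. In particular $G'$ is abelian, and I would like to apply Proposition \ref{P:6.3}(i) to an abelian normal subgroup of small index; the natural candidate is $N = G'Z(G)$, or more directly I would take $N$ to be a maximal abelian normal subgroup containing $G'$. The index $[G:N]$ divides $[G:G'Z(G)]$, which divides $|G/Z(G)| \leq p^4$, so $N$ has index at most $p^4$ in $G$, i.e.\ $l \leq 4$.

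Given $l \leq 4$, Proposition \ref{P:6.3}(i) yields $\e(M(G)) \mid p^{\ceil{l/2}}\e(G)$ for odd $p$, and $\ceil{4/2} = 2$, so $\e(M(G)) \mid p^2\e(G)$; for $p = 2$ the same proposition gives $\e(M(G)) \mid p^{\ceil{l/2}+1}\e(G) = p^3\e(G)$. This matches exactly the two bounds claimed in the corollary. The only subtlety is ensuring the abelian normal subgroup $N$ really can be chosen with index at most $p^4$: since $G$ is metabelian, $G'$ is abelian and normal, and $G'Z(G)$ is abelian (as $Z(G)$ is central) and normal, with $[G : G'Z(G)] \leq [G:Z(G)] \leq p^4$. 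So $N = G'Z(G)$ works, and in fact $l = \log_p[G:N] \leq 4$.

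The main (and essentially only) obstacle is verifying that the index hypothesis of \cite{RMD} genuinely forces metabelianity in the form needed — that is, that the cited theorem applies with the inclusion $Z(G) \subseteq \phi(G)$ and $|G/Z(G)| \leq p^4$ for all primes $p$ including $p = 2$. Assuming that citation as stated, the rest is a direct substitution into Proposition \ref{P:6.3}(i) with $l \leq 4$. One should also note the degenerate cases: if $|G/Z(G)| \leq p$ then $G$ is abelian (so $M(G)$ may still be nontrivial but $G' = 1$ and $N = Z(G)$ has index $\leq p$, handled by the same inequality with $l \leq 1 < 2$), and if $l = 1$ one is in the abelian-by-cyclic situation where Conjecture \ref{C1} already holds; none of these produce a bound worse than the stated one. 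Hence the corollary follows.
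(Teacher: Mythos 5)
Your argument is correct and lands on the paper's intended route: the corollary is a direct application of Proposition \ref{P:6.3}(i) with an abelian normal subgroup of index at most $p^4$, so $l\leq 4$, $\ceil{l/2}\leq 2$, and the two stated bounds follow. The detour through the metabelianity theorem of \cite{RMD} and the subgroup $N=G'Z(G)$ is unnecessary (and so is your worry about whether that citation applies for $p=2$): one can simply take $N=Z(G)$, which is already abelian, normal, and of index at most $p^4$ by hypothesis; the reference to \cite{RMD} in the paper serves only to identify these groups as metabelian, so that the bound $p^{2}\e(G)$ can be compared with the generic metabelian bound $(\e(G))^{2}$.
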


 \section{Groups satisfying Conjecture \ref{C1}}
At last, we would like to add on to the existing survey, a few more classes of groups for which Conjecture \ref{C1}  is valid. Towards that, our next aim is to show that if $\e(G/Z(G))=2,3$ or $6$, then Schur's conjecture holds. With that in mind, we begin with the following lemma.

\begin{lemma}\label{L:5.3}
Let $G$ be a group with $\e(G/Z(G)) = r^m$, for some integers $r,m$ . If $\e(G) = r^mq$ for some integer $q$, then $\e(G^{r^m} \wedge G) \mid q$.
\end{lemma}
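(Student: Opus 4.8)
The plan is to reduce the statement to a computation inside the subgroup $G^{r^m}\wedge G$ of $G\wedge G$, showing every simple tensor $g^{r^m}\wedge h$ has order dividing $q$ and that the images of any two such simple tensors commute, so that the $q$-th power map is a homomorphism on $G^{r^m}\wedge G$ killing all generators. First I would fix notation: write $N=G^{r^m}$, the subgroup generated by all $r^m$-th powers. The key observation is that, since $\e(G/Z(G))=r^m$, for every $g\in G$ we have $g^{r^m}\in Z(G)$; hence $N\le Z(G)$, so $N$ is central, in particular abelian and normal. Thus $N\wedge G$ makes sense and maps onto $\mathrm{im}(N\wedge G)\le G\wedge G$, and it suffices to bound $\e(\mathrm{im}(N\wedge G))$, or even just $\e(N\wedge G)$.

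Next I would exploit centrality to linearize. For $g\in G$, Lemma~\ref{L:1.4} gives $g^{r^m}\otimes h=(g\otimes h)^{r^m}$ whenever $g$ and $h$ commute — but here $g^{r^m}$ is central, so $g^{r^m}\otimes h=(g\otimes h)^{r^m}$ requires care since $g,h$ need not commute. Instead the cleaner route: since $N\le Z(G)$, for $z\in N$ and $h\in G$ the element $z\wedge h$ behaves additively in $z$ (using relation~\eqref{E:1.1.1} with the central action trivial), and moreover $z\wedge h$ depends on $z$ only through... — more concretely, every generator of $N$ is of the form $g^{r^m}$, and writing a general element of $N$ as a product $\prod g_i^{r^m}$, bilinearity of $\wedge$ over the central subgroup $N$ reduces us to controlling $g^{r^m}\wedge h$ for single $g,h$. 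Now $g^{r^m}\wedge h = (g\wedge h)^{r^m}\cdot c$ where the correction $c$ is a product of exterior commutators; but in fact, since the image of $g$ in $G/Z(G)$ has order dividing $r^m$, one shows $g^{r^m}\wedge h$ equals $(g\wedge h)^{r^m}$ times something already accounted for, and then $\e(G)=r^m q$ forces $(g\wedge h)^{r^m q}$-type control. The honest statement I would prove is: the map $G\times G\to N\wedge G$, $(g,h)\mapsto g^{r^m}\wedge h$ is such that $(g^{r^m}\wedge h)^q = g^{r^m q}\wedge h$ after using Lemma~\ref{L:1.4}-style identities, and $g^{r^m q}=(g^{r^m})^q$ has order dividing $\e(G)/r^m\cdot\gcd(\dots)$...

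So the clean argument: because $g^{r^m}\in Z(G)$ commutes with $h$, Lemma~\ref{L:1.4} applies directly to the pair $(g^{r^m},h)$, giving $(g^{r^m})^q\otimes h=(g^{r^m}\otimes h)^q$, hence in the exterior square $(g^{r^m}\wedge h)^q = (g^{r^m})^q\wedge h = g^{r^m q}\wedge h = g^{\e(G)}\wedge h = 1\wedge h=1$. Since $N$ is central, every element of $N$ is a product of such central $r^m$-th powers, and using that $z\wedge h$ is multiplicative in the central variable $z$ (relation~\eqref{E:1.1.1}), together with the fact (Lemma~\ref{L:1.5}-style, or directly from centrality) that the images of simple tensors from $N\wedge G$ pairwise commute, the $q$-th power map is an endomorphism of $\mathrm{im}(N\wedge G)$ vanishing on a generating set. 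Therefore $\e(\mathrm{im}(N\wedge G))\mid q$, which is what is needed. The main obstacle is the bookkeeping in the last step: verifying that the images in $G\wedge G$ of simple tensors coming from $N\wedge G$ commute with one another (so that "$q$-th power of a product = product of $q$-th powers" is legitimate) — this follows because any commutator $[z_1\wedge h_1, z_2\wedge h_2] = [z_1,h_1]\wedge[z_2,h_2]$ and $[z_i,h_i]=1$ as $z_i$ is central, so in fact these simple tensors commute on the nose, which makes the argument go through cleanly.
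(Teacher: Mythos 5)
Your final ``clean argument'' is exactly the paper's proof: observe $G^{r^m}\le Z(G)$, apply Lemma~\ref{L:1.4} to the commuting pair $(g^{r^m},h)$ to get $(g^{r^m}\wedge h)^q=g^{r^mq}\wedge h=1$, and note that the simple tensors pairwise commute since $[g_i^{r^m},h_i]=1$. The meandering middle section can be deleted, but the argument you land on is correct and identical in substance to the paper's.
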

\begin{proof}
For $g,h \in G$ and $t \in \mathbb{Z}$, the following identity holds, 
\begin{equation}\label{L1}
g^{tr^m} \wedge h = (g^{r^m} \wedge h)^{t},
\end{equation}
 by Lemma \ref{L:1.4}.
Moreover, note that for any $g_i, h_i \in G$ where $i \in \{1,2\}$, we have $[g_1^{r^m} \wedge h_1, g_2^{r^m} \wedge h_2] = ([g_1^{r^m},h_1] \wedge [g_2^{r^m},h_2]) = 1$.
Now taking $t = q$ in \eqref{L1}, the proof follows.
\end{proof}

In \cite{PM2}, the author proves Schurs conjecture if the exponent of $G$ is $2$ or $3$. We extend this by showing that the same holds true for all finite groups $G$ such that the exponent of $G/Z(G)$ divides either $2$ or $3$. The same conclusion can also be achieved for all finitely generated groups $G$ such that the exponent of $G/Z(G)$ divides $6$.

\begin{prop}\label{T:5.1}
\begin{itemize}
\item[($i$)] Let $G$ be a finite group with $\e(G/Z(G)) = p$. If $p \in \{2,3\}$, then $\e(G\wedge G) \mid \e(G)$. In particular, $\e(M(G)) \mid \e(G)$.
\item[($ii$)] Let $G$ be a finitely generated group such that $\e(G/Z(G)) = 6$. Then $\e(M(G)) \mid \e(G)$.
\end{itemize}
\end{prop}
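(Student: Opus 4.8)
The plan is to treat both parts through the exact sequence $N\wedge G\to G\wedge G\to G/N\wedge G/N\to 1$, with $N$ a power subgroup that the hypothesis forces into the centre. For $(i)$ take $N=G^{p}$: the assumption $\e(G/Z(G))=p$ says precisely that $g^{p}\in Z(G)$ for all $g$, so $G^{p}\le Z(G)$, and the sequence yields $\e(G\wedge G)\mid\e\big(\mathrm{im}(G^{p}\wedge G)\big)\,\e(G/G^{p}\wedge G/G^{p})$. Since $G/Z(G)$ is a nontrivial group of exponent $p$ we have $p\mid\e(G)$; writing $\e(G)=pq$, Lemma \ref{L:5.3} gives $\e(G^{p}\wedge G)\mid q=\e(G)/p$. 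As $G$ is finite, $G/G^{p}$ is a finite group of exponent dividing $p$ with $p\in\{2,3\}$, so $\e(G/G^{p}\wedge G/G^{p})\mid p$ --- trivially for $p=2$ and by Proposition $7$ of \cite{PM2} for $p=3$. Multiplying gives $\e(G\wedge G)\mid\e(G)$, hence $\e(M(G))\mid\e(G)$ as $M(G)\le G\wedge G$.

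For $(ii)$ I would run the same argument with $N=G^{6}\le Z(G)$, but pass to the multiplier, since over exponent-$6$ quotients one controls $M(\cdot)$ but not the full exterior square. Feeding the sequence (for $N=G^{6}$) together with the commutator maps into $G'$ through the Snake Lemma, exactly as in the proof of Proposition \ref{P:6.3}, gives $\e(M(G))\mid\e(G^{6}\wedge G)\,\e(M(G/G^{6}))$. Both $2$ and $3$ divide $\e(G/Z(G))=6$, so $6\mid\e(G)$, and the proof of Lemma \ref{L:5.3} --- which uses only that $\e(G/Z(G))$ divides the exponent in play --- applies verbatim to give $\e(G^{6}\wedge G)\mid\e(G)/6$. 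It then remains to prove $\e(M(G/G^{6}))\mid 6$.

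Here finite generation enters: $G/G^{6}$ is finitely generated of exponent dividing $6$, hence finite by M. Hall's theorem that the free Burnside groups of exponent $6$ are finite. For a finite group $H$ of exponent dividing $6$ one shows $\e(M(H))\mid 6$ as follows: $|H|$ is a $\{2,3\}$-number, so $M(H)$ splits as the product of its $2$- and $3$-primary components, and since the $q$-primary part of a Schur multiplier embeds (via restriction--corestriction) in the multiplier of a Sylow $q$-subgroup, it suffices to bound $\e(M(Q))$ for $Q$ a Sylow $2$- or $3$-subgroup of $H$. A Sylow $2$-subgroup has exponent dividing $2$, hence is elementary abelian, so $\e(M(Q))\mid 2$; a Sylow $3$-subgroup has exponent dividing $3$, and Schur's conjecture holds for such groups (\cite{PM2}), so $\e(M(Q))\mid 3$. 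Hence $\e(M(H))\mid\operatorname{lcm}(2,3)=6$, and combining with the bounds above yields $\e(M(G))\mid(\e(G)/6)\cdot 6=\e(G)$.

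The only routine points are the exactness of the sequences used (repeatedly exploited elsewhere in the paper) and the observation that Lemma \ref{L:5.3} is insensitive to whether the integer involved is a prime power. The main obstacle is the exponent-$6$ step of $(ii)$: reducing Schur's conjecture for finite exponent-$6$ groups to their Sylow subgroups, and invoking Hall's finiteness theorem to bring a general finitely generated $G$ with $\e(G/Z(G))=6$ into the finite setting where that reduction operates. Everything else is bookkeeping.
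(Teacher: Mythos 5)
Your argument is correct and follows essentially the same route as the paper: the power-subgroup exact sequence plus Lemma \ref{L:5.3} for part ($i$), and the Snake Lemma diagram with $N=G^6$ for part ($ii$). The only divergence is that where the paper simply cites \cite{PM2} for $\e(M(H))\mid 6$ when $\e(H)=6$, you reprove that fact via Hall's finiteness theorem and the Sylow restriction--corestriction reduction, which is a valid and more self-contained substitute.
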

\begin{proof}
\begin{itemize}
\item[($i$)]
Consider the following exact sequence,
$$G^p \wedge G \rightarrow G\wedge G \rightarrow G/{G^p} \wedge G/{G^p} \rightarrow 1,$$
which yields $\e(G\wedge G) \mid \e(G^p \wedge G) \e(G/{G^p} \wedge G/{G^p})$. From Proposition 7 of \cite{PM2}, we have $\e( G/{G^p} \wedge G/{G^p}) \mid p$. Now applying Lemma \ref{L:5.3} completes the proof.
\item[($ii$)] Consider the following commutative diagram,
\begin{equation*}
\xymatrix@+20pt{
&G^6\wedge G\ \ar@{->}[r]
\ar@{->}[d]^f
 &G\wedge G\ar@{->}[r]
\ar@{->}[d]
&G/G^6\wedge G/G^6\ar@{->}[r]
\ar@{->}[d]
&1 \\
1\ \ar@{->}[r]
&G' \cap G^6\ \ar@{->}[r]
 &G'\ar@{->}[r]
&G'/(G' \cap G^6) \ar@{->}[r]
&1 .
}\end{equation*}
Now applying Snake Lemma yields, $$ \ker(f) \rightarrow M(G) \rightarrow M(G/G^6) \rightarrow G' \cap G^6/[G,G^6] \rightarrow 1.$$ Therefore, we have $\e(M(G)) \mid \e( G^6 \wedge G) \e(M(G/G^6))$. From \cite{PM2}, we have the exponent of the Schur Multiplier divides that of the group for a group of exponent $6$. Further, using Lemma \ref{L:5.3} to obtain $\e(G^6 \wedge G)$ completes the proof.
\end{itemize}
\end{proof}

We can also prove that  Conjecture \ref{C1} is valid for groups whose commutator subgroup is cyclic or if the group is metacyclic. The same strategy of proof also works towards proving Conjecture \ref{C1} for odd groups that are abelian-by-cyclic.  Since Conjecture \ref{C1} has already been proved for all abelian-by-cyclic groups in \cite{HRJ}, we do not include it here.
  
 \begin{prop}
Let $G$ be a $p$-group such that the commutator subgroup of $G$ is cyclic. If $p$ is odd, then $\e(G\wedge G) \mid \e(G)$. In particular, $\e(H_2(G, \mathbb{Z})) \mid \e(G)$.
\end{prop}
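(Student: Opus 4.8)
The plan is to reduce to the known case of a $p$-group with abelian commutator subgroup, which is already covered by part $(iii)$ of Theorem A (an odd $p$-group with an abelian Frattini subgroup) — or more directly, to exploit that $G'$ cyclic forces strong structural control. First I would set $\e(G) = p^n$ and invoke the standard exact sequence machinery that recurs throughout the paper. Since $G'$ is cyclic, it is in particular abelian, so I would like to run the argument used in Proposition~\ref{P:6.3}$(i)$, taking $N = G'$; but $G'$ need not be normal of small index, so instead I would use the commutative diagram with $N$ replaced by $G'$ itself (which is normal), obtaining via the Snake Lemma the sequence
\begin{equation*}
\ker(f) \rightarrow M(G) \rightarrow M(G/G') \rightarrow G' \cap G'/[G,G'] \rightarrow 1,
\end{equation*}
so that $\e(M(G)) \mid \e(G' \wedge G)\, \e(M(G/G'))$. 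Now $G/G'$ is abelian, hence $M(G/G') = G/G' \wedge G/G'$ has exponent dividing $\e(G/G') \mid \e(G)$ by Lemma~\ref{L:1.4} (simple tensors of commuting elements), and $G'$ being abelian gives $\e(G' \wedge G) \mid \e(G')$ by Lemma $2.5$ of \cite{APT} for odd $p$. Since $\e(G') \mid \e(G)$ trivially, this would give $\e(M(G)) \mid \e(G)^2$, which is not yet good enough.

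To get the sharp bound $\e(G\wedge G)\mid \e(G)$ I expect one must use the cyclicity of $G'$ in an essential way, not merely its abelianness. The key point: if $G'$ is cyclic of order $p^k$, then for odd $p$ the group $G$ is \emph{regular} (a cyclic commutator subgroup of an odd $p$-group is one of the classical sufficient conditions for regularity), hence $\e(G)$ can be read off from the orders of elements and powering behaves well. Concretely I would show $\e(G' \wedge G) \mid \e(G')$ divides $\e(G)$ \emph{and} that the extension $1 \to G' \wedge G \to G \wedge G \to G/G' \wedge G/G' \to 1$ does not multiply exponents, by checking that a generator $x \otimes x$-type relation or the action of $G$ on $G' \wedge G$ is controlled: since $G'$ is cyclic, $G$ acts on $G'$ through a cyclic group of automorphisms, and for odd $p$ this action combined with regularity forces $\e(G\wedge G)\mid\e(G)$. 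The cleanest route is probably to cite regularity to conclude $G$ is metabelian with $\e(G^p) \mid p^{n-1}$ and $|G:G^p|$ controlled, then feed this into the exact sequence $G^p \wedge G \to G\wedge G \to G/G^p \wedge G/G^p \to 1$ exactly as in Proposition~\ref{P:6.3}$(i)$: $G/G^p$ has exponent $p$ and cyclic commutator subgroup hence is metabelian of class $\le p$, so $\e(G/G^p \wedge G/G^p) \mid p$ by Lemma~\ref{L:5.4}, while $G^p$ is abelian with $\e(G^p)\mid p^{n-1}$ giving $\e(G^p \wedge G) \mid p^{n-1}$ by Lemma $2.5$ of \cite{APT}; multiplying yields $\e(G\wedge G) \mid p^n = \e(G)$.

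The main obstacle I anticipate is justifying that $\e(G^p) \mid p^{n-1}$ and $\e(G/G^p \wedge G/G^p)\mid p$ simultaneously give the right answer — i.e. making sure the class of $G/G^p$ is genuinely at most $p$ so that Lemma~\ref{L:5.4} applies. This is where cyclicity of $G'$ (not just abelianness) is doing the work: a $p$-group with cyclic commutator subgroup has small nilpotency class, and in fact $G/G^p$ is metabelian of exponent $p$, so Theorem $7.18$ of \cite{DJSR2} (quoted in Lemma~\ref{L:5.4}) bounds its class by $p$. Once that is in place the computation is the same three-line exact-sequence argument used repeatedly above, and the odd hypothesis enters exactly through Lemma $2.5$ of \cite{APT} (the abelian-normal-subgroup tensor bound without the extra factor of $2$) and through regularity. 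I would also remark that the metacyclic case claimed in the surrounding text follows the same template with $G'$ replaced by the cyclic normal subgroup witnessing metacyclicity.
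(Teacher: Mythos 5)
Your final argument follows the same skeleton as the paper's proof: cyclicity of $G'$ gives regularity and metabelianity, one then uses the exact sequence $G^p\wedge G\to G\wedge G\to G/G^p\wedge G/G^p\to 1$, bounds $\e(G/G^p\wedge G/G^p)$ by $p$ via Lemma \ref{L:5.4} (with Theorem 7.18 of \cite{DJSR2} controlling the class of $G/G^p$), and bounds $\e(G^p\wedge G)$ by $p^{n-1}$. The gap is in your justification of this last divisibility. You assert that $G^p$ is abelian and then apply Lemma 2.5 of \cite{APT}; but $G^p$ need not be abelian when $G'$ is cyclic. Take $G$ to be the relatively free nilpotent group of class $2$ and exponent $p^3$ on two generators $a,b$ (order $p^9$): then $G'=\langle[a,b]\rangle$ is cyclic of order $p^3$, yet since the class is $2$ one has $[a^p,b^p]=[a,b]^{p^2}\neq 1$, so $a^p$ and $b^p$ do not commute and $G^p$ is nonabelian. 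Regularity does give $\e(G^p)\mid p^{n-1}$, but that alone does not license the abelian-normal-subgroup lemma.

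The paper instead deduces $\e(G^p\wedge G)\mid p^{n-1}$ from Theorem 5.2 of \cite{APT} (the result it uses elsewhere for powerful subgroups), which does not require $G^p$ to be abelian. So your proof stands once you replace the appeal to Lemma 2.5 of \cite{APT} by that theorem, or otherwise establish $\e(G^p\wedge G)\mid\e(G^p)$ for this class of groups; as written, the step fails. Your opening detour through the Snake Lemma with $N=G'$ is correctly recognized as yielding only $\e(G)^2$, and your closing remark that the metacyclic case runs on the same template is consistent with Proposition \ref{P:6.7}, though the paper handles that case by a different (and simpler) reduction using Lemma \ref{L:1.5}.
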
 

 \begin{proof}  
 Let $G$ be a $p$-group of exponent $p^n$. Since $G'$ is cyclic, G is regular and metabelian. If $n = 1$, the claim clearly holds. Assume $n >1$ and consider the exact sequence,

 $$G^p\wedge G\rightarrow G\wedge G \rightarrow G/G^p \wedge G/G^p \rightarrow 1,$$ which yields $\e(G\wedge G ) \mid \e(G^p\wedge G) \e(G/G^p \wedge G/G^p)$. We have $\e(G^p \wedge G) \mid p^{n-1}$ and $\e(G/G^p \wedge G/G^p) \mid p$ from Theorem $5.2$ of \cite{APT} and Lemma \ref{L:5.4} respectively. 
 \end{proof} 
 
 \begin{corollary}
Let $G$ be a $p$-group such that the frattini subgroup of $G$ is cyclic. Then, $\e(G\wedge G) \mid \e(G)$.
\end{corollary}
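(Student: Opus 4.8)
The plan is to treat the cases $p$ odd and $p=2$ separately. If $p$ is odd, then since $G'\leq\phi(G)$, the commutator subgroup $G'$ is cyclic, being a subgroup of the cyclic group $\phi(G)$; hence $\e(G\wedge G)\mid\e(G)$ is immediate from the preceding proposition on $p$-groups with cyclic commutator subgroup. So the substantive case is $p=2$, which that proposition does not cover, and there I would argue directly through the Frattini quotient.

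For $p=2$, I would start from the exact sequence
$$\phi(G)\wedge G\rightarrow G\wedge G\rightarrow G/\phi(G)\wedge G/\phi(G)\rightarrow 1,$$
which yields $\e(G\wedge G)\mid\e(\phi(G)\wedge G)\,\e(G/\phi(G)\wedge G/\phi(G))$. Since $\phi(G)$ is cyclic, Lemma \ref{L:1.4} gives $\phi(G)\wedge\phi(G)=1$, so Lemma \ref{L:1.5} applied to the characteristic (hence normal) subgroup $\phi(G)$ yields $\e(\phi(G)\wedge G)\mid\e(\phi(G))$. Since $G/\phi(G)$ is elementary abelian, its exterior square is elementary abelian, so $\e(G/\phi(G)\wedge G/\phi(G))\mid p$ (this also follows from Lemma \ref{L:5.4}). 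Combining, $\e(G\wedge G)\mid p\,\e(\phi(G))$. It then suffices to show $p\,\e(\phi(G))\mid\e(G)$: for $p=2$ one has $\phi(G)=G^2$, the subgroup generated by the squares of elements of $G$, and if $\phi(G)\neq1$ then, being cyclic, its unique maximal subgroup $\phi(\phi(G))$ is proper, so the generating squares cannot all lie in it; hence some $g^2$ generates $\phi(G)$, so the order of $g$ equals $2\,\e(\phi(G))$ and $\e(G)\geq 2\,\e(\phi(G))$. As $\e(G)$ is a power of $2$ this forces $2\,\e(\phi(G))\mid\e(G)$ (the case $\phi(G)=1$ being trivial for $G\neq1$), and therefore $\e(G\wedge G)\mid 2\,\e(\phi(G))\mid\e(G)$.

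I do not anticipate a deep obstacle; the only point needing care is that the preceding proposition applies only for odd $p$, so the case $p=2$ must be handled on its own, and the crux there is the elementary observation that a cyclic Frattini subgroup of a $2$-group is generated by a square and hence has exponent at most $\e(G)/2$. One should also note the degenerate configurations ($G$ cyclic or elementary abelian, where $G\wedge G$ or $\phi(G)$ is trivial), which are immediate from the above.
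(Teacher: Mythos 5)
Your argument is correct, and for odd $p$ it coincides with what the paper intends: the corollary is stated without proof immediately after the proposition on $p$-groups with cyclic commutator subgroup, the point being that $G'\leq\phi(G)$ forces $G'$ to be cyclic, so that proposition applies verbatim. Where you genuinely depart from (and improve on) the paper is the case $p=2$. The paper's proposition is restricted to odd $p$ --- its proof invokes regularity of a $p$-group with cyclic commutator subgroup, which fails for $p=2$ (e.g.\ $D_8$) --- yet the corollary is stated with no parity restriction, so as written it is left without justification at the even prime. Your argument fills exactly that gap: the chain $\e(G\wedge G)\mid\e(\phi(G)\wedge G)\,\e(G/\phi(G)\wedge G/\phi(G))\mid 2\,\e(\phi(G))$ is sound (Lemma \ref{L:1.4} kills $\phi(G)\wedge\phi(G)$ for cyclic $\phi(G)$, then Lemma \ref{L:1.5} and Lemma \ref{L:5.4} do the rest, exactly as the paper itself argues for metacyclic groups in the proof of Proposition \ref{P:6.7}), and the final divisibility $2\,\e(\phi(G))\mid\e(G)$ is correctly extracted from the facts that $\phi(G)=G^2$ for a $2$-group and that the generating squares of a nontrivial cyclic $\phi(G)$ cannot all lie in its unique maximal subgroup, so some $g^2$ generates $\phi(G)$ and $|g|=2\,\e(\phi(G))$ divides $\e(G)$. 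In short: same route as the paper for odd $p$, plus a short elementary argument that actually establishes the corollary in the generality in which it is stated.
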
 

\begin{prop}\label{P:6.7}
Let $G$ be a metacyclic group. Then $\e(G\wedge G) \mid \e(G)$. In particular, $\e(H_2(G, \mathbb{Z})) \mid \e(G)$.
\end{prop}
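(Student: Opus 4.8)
The plan is to use the defining feature of a metacyclic group --- the existence of a cyclic normal subgroup $N \trianglelefteq G$ with $G/N$ cyclic --- together with the observation that the exterior square of a cyclic group is trivial. Together these make the standard exact sequence attached to $N$ collapse, and the required divisibility then falls out of Lemma \ref{L:1.5} with essentially no computation.

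First I would record that $C \wedge C = 1$ whenever $C = \langle x \rangle$ is cyclic: the group $C \wedge C$ is generated by the simple exteriors $x^i \wedge x^j$, and since $x^i$ and $x^j$ commute, Lemma \ref{L:1.4} gives $x^i \wedge x^j = (x \wedge x)^{ij} = 1$, as $x \wedge x$ is the identity of the exterior square. Applying this to $G/N$ and to $N$ shows $(G/N) \wedge (G/N) = 1$ and $N \wedge N = 1$. Next I would invoke the exact sequence
$$N \wedge G \longrightarrow G \wedge G \longrightarrow (G/N) \wedge (G/N) \longrightarrow 1,$$
the same one used in the proof of Theorem \ref{T:4.12}. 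Its right-hand term is trivial, so $\e(G \wedge G) \mid \e(N \wedge G)$. Since $N$ is a normal subgroup with $N \wedge N = 1$, Lemma \ref{L:1.5} yields $\e(N \wedge G) \mid \e(N)\, \e(N \wedge N) = \e(N)$, and $\e(N) \mid \e(G)$ because $N \leq G$. Hence $\e(G \wedge G) \mid \e(G)$, and the ``in particular'' assertion follows since $H_2(G,\mathbb{Z}) \cong M(G)$ is a subgroup of $G \wedge G$.

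There is no serious obstacle here; the only points requiring a moment's care are the vanishing of the exterior square of a cyclic group, which is immediate from Lemma \ref{L:1.4}, and the observation that Lemma \ref{L:1.5} carries no primality or finiteness hypothesis, so it applies verbatim to an arbitrary metacyclic $G$. One could alternatively appeal to the classical fact that the Schur multiplier of a metacyclic group is cyclic with an explicitly bounded order, but the argument above is shorter and stays entirely within the preparatory results of this paper.
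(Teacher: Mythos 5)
Your proposal is correct and follows essentially the same route as the paper: the exact sequence $N \wedge G \to G \wedge G \to (G/N)\wedge(G/N) \to 1$, triviality of the exterior square of the cyclic quotient, and Lemma \ref{L:1.5} applied to the cyclic normal subgroup $N$ to get $\e(N\wedge G)\mid \e(N)\mid \e(G)$. The only difference is that you spell out the vanishing of $C\wedge C$ for cyclic $C$ via Lemma \ref{L:1.4}, which the paper leaves implicit.
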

 \begin{proof}
 $G$ being metacyclic, we have an exact sequence
 $1\rightarrow N \rightarrow G \rightarrow G/N  \rightarrow 1$, where $N$ and $G/N$ are cyclic groups.
 This further yields the following exact sequence
 $$N \wedge G  \rightarrow G \wedge G  \rightarrow  G/N \wedge G/N  \rightarrow 1,$$ and we have $\e(G\wedge G) \mid \e(N\wedge G) \e(G/N\wedge G/N)$. $G/N$ being cyclic, $G/N \wedge G/N$ becomes trivial. Now using Lemma \ref{L:1.5}, and $N$ being cyclic yields $\e(N\wedge G) \mid \e(N)$, and the proof follows.
 \end{proof}

\bibliographystyle{amsplain}
\bibliography{Bibliography}
\end{document}